\documentclass[12pt]{article}

\usepackage[utf8]{inputenc}
\usepackage[english]{babel}
\usepackage{amsthm, amsmath, amssymb}
\usepackage{calrsfs} 
\usepackage{graphicx,color}
\usepackage{soul} 
\usepackage[margin=1in]{geometry}
\usepackage{hyperref,cleveref}
\newcommand{\m}[1]{\texorpdfstring{$#1$}{}}

\newtheorem{lemma}{Lemma}

\newtheorem{corollary}{Corollary}

\newtheorem{proposition}{Proposition}
\newtheorem{theorem}{Theorem}
\newtheorem{remark}{Remark}


\newcommand{\E}{\mathbb{E}}
\renewcommand{\P}{\mathbb{P}}
\newcommand{\R}{\mathbb{R}}
\newcommand{\Cov}{\mathrm{Cov}}
\renewcommand{\a}{\alpha}
\newcommand{\1}{\mathbf{1}}

\title{Gaussian consensus processes and their Lyapunov exponents}
\date{13 March 2026}

\raggedbottom

\begin{document}

\author{Edward Crane\footnote{Heilbronn Institute for Mathematical Research and School of Mathematics, University of Bristol, Bristol BS8~1UG, United Kingdom}
\ and Stanislav Volkov\footnote{Centre for Mathematical Sciences, Lund University, Lund 22100, Sweden}}

\maketitle

\abstract{We introduce a simple dynamic model of opinion formation, in which a finite population of individuals hold vector-valued opinions. At each time step, each individual's opinion moves towards the mean opinion but is then perturbed independently by a centred multivariate Gaussian random variable, with covariance proportional to the covariance matrix of the opinions of the population.  We establish precise necessary and sufficient conditions on the parameters of the model, under which all opinions converge to a common limiting value. Asymptotically perfect correlation emerges between opinions on different topics. Our results are rigorous and based on properties of the partial products of an i.i.d.~sequence of random matrices. Each matrix is a fixed linear combination of the identity matrix and a real Ginibre matrix. We derive an analytic expression for the maximal Lyapunov exponent of this product sequence.  We also analyse a continuous-time analogue of our model.
}

\noindent
\textbf{Keywords:} stochastic opinion dynamics, phase transition, Lyapunov exponents, real Ginibre ensemble, DeGroot learning, consensus process, interacting particle systems, Brownian motion of ellipsoids

\noindent
\textbf{AMS subject classification:}  60G20, G0F15, 93D50

\section{Introduction}
In this paper, we introduce two simple stochastic processes which we call \emph{Gaussian consensus processes}. Model~A is a discrete-time Markov process, and Model~B is a continuous-time Markov diffusion. They are fairly reasonable and yet highly mathematically tractable stochastic models of opinion dynamics. 

Opinion dynamics is a popular subject in the current mathematical modelling and social science literature, for example, see Hassani et al.~\cite{fusion}, Peralta et al.~\cite{PKI} and references therein. Often the focus is on explanatory power; for example, the recent survey by Devia et al.~\cite{Devia} compares the qualitative features of the output of a variety of opinion formation models with opinion data from a selection of real-world surveys. Our focus, however, will be on the theoretical probabilistic properties of a stochastic opinion formation model that is designed to be highly tractable, but nevertheless displays qualitative features which match at least one familiar aspect of opinion dynamics in the real world. {Another famous opinion dynamics model, the \emph{voter model}, concerns discrete opinions evolving on a graph, with individuals being influenced by their immediate neighbours. This is not closely related to the topic of the present paper, because our models are mean field with opinions in a continuum.} 

Let us mention a few stochastic opinion dynamics models which are somewhat closely related to the current paper. In the Weisbuch-Deffuant model \cite{DWmodel}, $N$ individuals each hold an opinion in the interval $[0,1]$; at each time step two individuals are chosen uniformly at random, and if their opinions are less than~$\epsilon$ apart then they each move their opinion towards the mean of their two opinions. The model converges almost surely to a limit in which clusters of individuals all agree perfectly within each cluster, and the clusters are separated by distance at least~$\epsilon$. In a noisy variation considered by Pi\~{n}eda et al.~\cite{Pineda}, each individual occasionally resamples their opinion from the uniform distribution on $[0,1]$. The \emph{bounded confidence} cutoff, controlled by the parameter~$\epsilon$, is a nonlinearity which induces clustering of the opinions into multiple clusters when the non-linearity is strong enough in comparison to the noise. 

In the well-known DeGroot model, individuals correspond to vertices of a graph. Edges correspond to pairs of individuals who influence each other. At each time step, each individual deterministically updates their $\mathbb{R}$-valued opinion to the mean of their neighbours' opinions at the previous time step. Stern and Livan \cite{SternLivan} recently studied a stochastic variant of the DeGroot model in which the opinions are further modified at each time step by the addition of Gaussian noise. Stern and Livan's equation~(3.4) allows the variance of the noise terms to depend on the current state in a general way, which includes the one-dimensional case of our Model~A as a special case. However, their main object of study is a model in which the variance of the noise term is constant.  In the alternative models considered in Section~7 of~\cite{SternLivan}, the variance of the noise term varies from individual to individual according to how far their opinion is from the empirical mean. Stern and Livan's main focus is a numerical study of the effect of the community structure of the graph which governs the interactions. In contrast, in our models, the opinions are in $\mathbb{R}^d$, and the covariance of the noise term is a function of the empirical covariance of the opinions, while the underlying graph is the complete graph on $N$ vertices. 

{Finally, we would like to mention a few very recent papers related to opinion dynamics and somewhat relevant to our paper: \cite{OD3,OD2,OD1}.}

\subsection{Models}

The models that we introduce and study in this paper are \emph{mean field} (the underlying graph is the complete graph) and {\em linear} (there is no bounded confidence cutoff). 
The novel feature of our models is the combination of Gaussian noise and invariance under affine linear changes of coordinates; these two properties allow us to compute {\em precisely} the critical parameters on the boundary between stability and instability, using the classical theory of Lyapunov exponents of random matrix products.

In Model~A, $N \ge 2$ individuals each hold real-valued opinions on $d$ topics. Each individual updates their opinions once every day. The opinions of individual $i$ on day $t$ are represented by a row vector $X_i(t) \in \mathbb{R}^d$. We collect these row vectors in an $N\times d$ matrix $X(t)$ whose $i^{th}$ row is $X_i(t)$. The initial condition $X(0)$ may be deterministic or random. For each time $t \in \mathbb{N}_0$, we denote by $\mu^X(t)$ and $\Cov^X(t)$ the empirical mean and empirical covariance matrix of the $N$ opinion vectors at time $t$. The model has two parameters: $\alpha > 0$ and $\beta \in [0,1]$. We may think of $\alpha$ as a measure of how much the individuals like to stand out from the crowd, and $\beta$ as a measure of their self-confidence.   For each $t \ge 1$, conditional on $X(t-1)$, let $Y_1(t), \dots, Y_N(t)$ be independent samples from the multivariate Gaussian distribution with mean $\mu^X(t-1)$ and $d\times d$ covariance matrix $\alpha\, \Cov^X(t-1)$. Then for each $1 \le i \le N$, let 
\begin{equation}\label{E: first definition of model A} 
X_i(t) = \beta X_{i}(t-1) + (1-\beta) Y_i(t)\,.
\end{equation}
Note that conditional on $X(t-1)$ the  $N$ opinion vectors $X_1(t), \dots, X_N(t)$ are independent. Also, note that  $\left(X(t)\right)_{t \in \mathbb{N}_0}$ is a discrete-time Markov process.

Formally, 
$$ 
\mu^X(t) = \frac{1}{N}\sum_{k=1}^N X_k(t)\,,
$$
and the coordinates of $\mu^X(t)$ are $\mu_1^X(t),\mu_2^X(t),\dots,\mu_d^X(t)$. Consider the centred $N \times d$ matrix~$\bar X(t)$ with rows $X_i(t) - \mu^X(t)$ for $i = 1, \dots, N$. Each column of $\bar X(t)$ has sum zero. Define $$\Cov^X(t) = \frac{1}{N} \bar X(t)^T \bar X(t)\,.$$ Thus $\Cov^X(t)$ is the $d\times d$ covariance matrix of the empirical distribution at time $t$. Its entry in row $i$ and column $j$ is
$$ 
\Cov_{i,j}^X(t) =  \frac{1}{N}\sum_{k=1}^N \left(X_{k,i}(t) - \mu_i^X(t) \right)\left(X_{k,j}(t) - \mu_j^X(t)\right)\,.
$$
We now describe the transition law of the Markov chain $\left(X(t)\right)_{t \ge 0}$. At each time step $t \ge 1$, conditional on $X(t-1)$, the opinion vectors $X_1(t), \dots, X_N(t)$ are independent and for each $i = 1, \dots, N$, $X_i(t)$ has the multivariate Gaussian distribution with mean
$$ 
\beta X_i(t-1) + (1-\beta)\mu^X(t-1)$$ and covariance 
$$
\rho\ \Cov^X(t-1)\,,$$
 where 
\begin{align}\label{defrho}
\rho :=\alpha (1-\beta)^2\,.    
\end{align}

Model~A is invariant under affine linear transformations of the opinion space: if $f: x \mapsto xU + v$, where $U$ is any $d\times d$ real matrix and $v \in \mathbb{R}^{d}$, then the sequence $\left(X'(t)\right)_{t \ge 0}$ defined by $X'_i(t) = f(X_i(t))$ is a Markov process with the same transition kernel as $\left(X(t)\right)_{t \ge 0}$. 

We will show in Proposition~\ref{prop: Covs are Markov} that $\left(\mathrm{Cov}^X(t)\right)_{t \ge 0}$ is a Markov chain on its own, and conditional on its entire evolution, the empirical mean process $\left(\mu^X(t)\right)_{t \ge 0}$ is a martingale with independent Gaussian increments. This structure enables us to reduce questions about the model to questions about the sequence of partial products of a sequence $M(1), M(2), \dots$ of i.i.d.~random $(N-1)\times(N-1)$ matrices. Each matrix $M(t)$ is distributed as a linear combination of the identity and a \emph{real Ginibre matrix}. A random $k\times k$ matrix is called a real Ginibre matrix if its entries are i.i.d.~standard normal random variables. Its distribution is called the real Ginibre ensemble of rank~$k$. A precise description of $M(t)$ will be given below in Section~\ref{SS: results about Model~A}.

An important question about any opinion dynamics model is whether the opinions converge almost surely to a consensus, i.e.~to a finite limit in the opinion space $\mathbb{R}^d$. Theorem~\ref{t1} answers this question for all values of~$N, \alpha$, and $\beta$. Our analysis reduces the problem to determining the sign of the maximal Lyapunov exponent of the matrix product sequence.  To keep the paper self-contained, we include a brief summary of the key definitions and theorems about Lyapunov exponents of i.i.d.~random matrix products.  Extending a classical result of Newman \cite{Newman} about products of real Ginibre matrices, we use the fact that the law of~$M(1)$ is invariant under orthogonal conjugation to compute the maximal Lyapunov exponent of our i.i.d.~matrix product exactly. For even $N \ge 4$ we express it as an infinite series involving the digamma function, and for odd $N$ we express it in terms of the exponential integral function $\mathrm{Ei}_1$. We believe this computation has not previously appeared in the literature. In fact there are only a few random matrix ensembles with nontrivial maximal Lyapunov exponents that are known to be expressible without reference to an inexplicit invariant measure on a projective space; see for example \cite{Forrester, Kargin, Marklof, Newman}.  Our computation of the maximal Lyapunov exponent determines the precise region in parameter space where consensus is reached almost surely. We show that outside of this region the opinion vectors almost surely diverge as $t \to \infty$. In the critical case, on the boundary of the consensus region, almost surely the diameter of the set of opinions is neither bounded nor bounded away from $0$ as $t \to \infty$. In the supercritical case, the individual opinions and the mean all tend to $\infty$ almost surely.

We also obtain an expression for all $N-1$ Lyapunov exponents, and compute explicit positive lower bounds on the gaps between successive Lyapunov exponents, in terms of the parameters $(N, \alpha, \beta)$.  The gap between the first two Lyapunov exponents has an interpretation in the opinion dynamics, given in Theorem~\ref{t2}. The gap is the exponential rate at which the population's opinions on the $d$ different topics tend towards being perfectly correlated. It is striking that even in a very simple linear model we can observe the formation of a one-dimensional `political spectrum', whose alignment in the $d$-dimensional opinion space is random. This is described in Theorem~\ref{t3}.

Model~B is a continuous-time analogue of Model~A, with time running over $[0,\infty)$. {It is motivated by considering the possible scaling limit of Model~A as the updates become more frequent and less strong, though we do not prove such a scaling limit result in this paper; we pose it as an open problem to identify the correct scaling regime.} 
It is defined in detail in Section~\ref{SS: Model~B description} below. Briefly, the $N$ opinion vectors diffuse, solving a stochastic differential equation. The diffusion coefficient of each individual's opinion is defined to be the unique non-negative definite symmetric square root of the empirical covariance matrix of the $N$ opinion vectors in $\mathbb{R}^d$. There is also a linear drift towards the empirical mean of the $N$ opinion vectors. We check that the SDE is well-posed using the Araki--Yamagami inequality. We determine the critical strength of the linear drift term, above which consensus is reached almost surely, as a function of $N$.  The random matrix product that occurs in the analysis of Model~A is replaced in the analysis of Model~B by the ellipsoid diffusion process described by Norris, Rogers and Williams \cite{NRW}, who gave an elementary treatment of earlier work by Dynkin and Orihara.

\subsection{Results about the discrete time model (Model~A)}\label{SS: results about Model~A}

Before stating our results, we rule out two special cases. Firstly we assume $\beta \neq 1$, since the case $\beta = 1$ corresponds to completely stubborn individuals who never change their opinion. Thus $\beta \in [0,1)$. Secondly, we assume that almost surely there is no nontrivial linear combination of the $d$ opinions on which all $N$ individuals agree. In other words, we assume that $\Cov^X(0)$ is positive definite almost surely. For if we start with perfect consensus on some `mixture of topics' represented by a linear functional of the $d$ coordinates, then no individual can ever change their opinion on this mixture; we may as well ignore this direction, which has the effect of projecting the model to a lower dimension.  As a consequence, we must assume $N \ge d+1$.

\begin{theorem}[Phase transition]\label{t1}
For any real $m > 0$ let $\phi_m$ be the entire function defined for all $x \in \mathbb{C}$ by
\begin{equation} \label{def of phi}
\phi_m(x) := e^{-x} \sum_{j=0}^\infty \frac{x^j}{j!} \psi(j+m)=\psi(m)+\int_0^1 \frac{(1-e^{-xs})(1-s)^{m-1}}{s}\mathrm{d}s\,,
\end{equation}
where $\psi$ is the digamma function. Let $z = \frac{N\beta^2}{2\alpha(1 -\beta)^2}=\frac{N\beta^2}{2\rho}$, and define
$$ 
\lambda_1=\lambda_1(\alpha, \beta, N) = \frac{1}{2}\left[ \phi_{{\frac{N-1}{2}}}
\left(\frac{N\beta^2}{2\rho}\right)
+\log \frac{2\rho}N \right] = \log \beta + \frac{1}{2}\left[ \phi_{{\frac{N-1}{2}}}(z) - \log z \right]
\,,
$$ (where the final expression applies only in the case $\beta > 0$.)
\begin{enumerate}
\item[(i)] If $\lambda_1 < 0$ then  almost surely there exists a random limit $\mu_\infty \in \mathbb{R}^d$ such that for all $i=1, \dots, N$, $X_i(t) \to \mu_\infty$ as $t \to \infty$.
\item[(ii)] If $\lambda_1 = 0$ then the mean $\mu^X(t)$  and each of the individual opinions $X_i(t)$ diverge as $t \to \infty$, almost surely. 
\item[(iii)] If $\lambda_1 > 0$ then for every deterministic row vector $u \in \mathbb{R}^d \setminus\{0\}$, almost surely for all $i=1, \dots, N$, $\lim_{t\to\infty}| u \cdot X_i(t)|  = \infty$ and $\lim_{t \to \infty} |u \cdot \mu^X(t)| = \infty$.
\end{enumerate}
\end{theorem}
{Some illustrations of how $\lambda_1$ depends on $\alpha$, $\beta$ and $N$, and relevant phase transitions, are given in Figures~\ref{fig1} and~\ref{fig2}.}

\begin{figure}
\centering
\begin{minipage}{.5\textwidth}
  \centering
  \includegraphics[width=.9\linewidth]{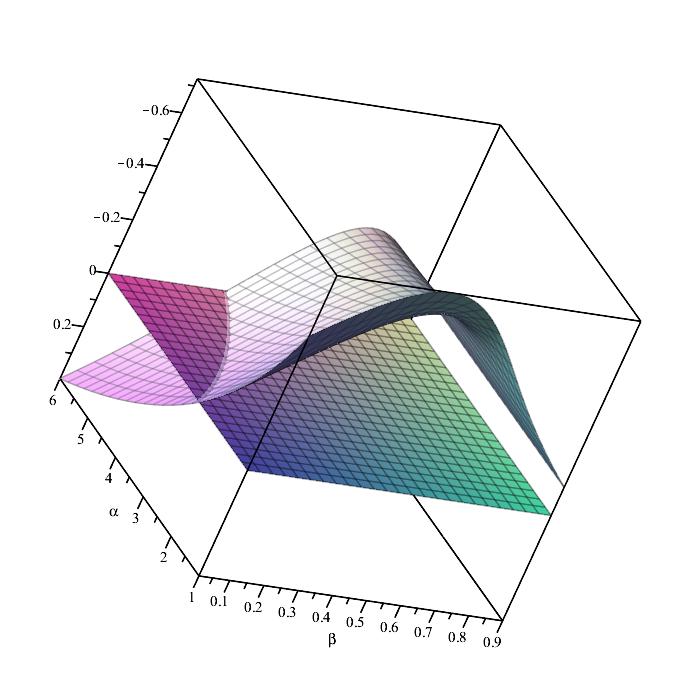}
  \end{minipage}%
\begin{minipage}{.5\textwidth}
  \centering
  \includegraphics[width=.9\linewidth]{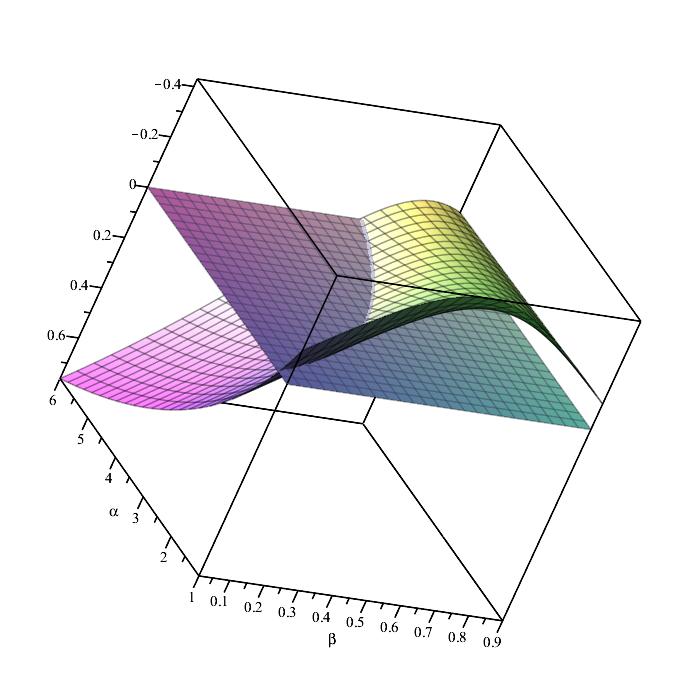}
\end{minipage}%
\caption{$\lambda_1$ as function of $\alpha,\beta$ for $N=3$ (left) and $N=9$ (right).}
\label{fig1}
\end{figure}

\begin{figure}
\centering
\begin{minipage}{.49\textwidth}
  \centering
  \includegraphics[width=0.75\linewidth]{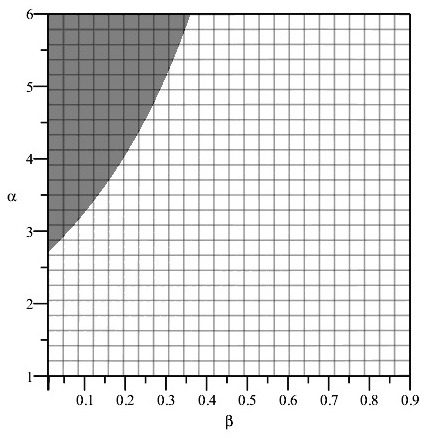}
\end{minipage}
\begin{minipage}{.49\textwidth}
  \centering
  \includegraphics[width=0.75\linewidth]{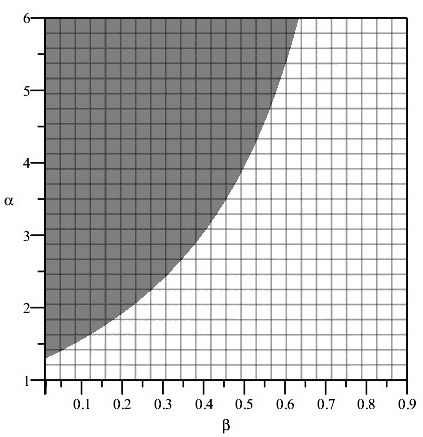}
\end{minipage}
\caption{Phase transition of $\lambda_1$ for $N=3$ (left) and $N=9$ (right). The grey area is where the model is unstable, i.e.\ $\lambda_1\ge 0$.}
  \label{fig2}
\end{figure}

\begin{remark}
The function $\phi_m$ is defined by the left-hand equality in equation~\eqref{def of phi} and we will prove in Lemma~\ref{lem:phi} that it is indeed equal to the final expression in~\eqref{def of phi}.
\end{remark}
\begin{remark}
 We show that as $N\to\infty$ with $\alpha > 0$ and $\beta \in [0,1)$ fixed, we have $\lambda_1\to \frac12\log(\alpha(1-\beta)^2+\beta^2)$. See  Lemma~\ref{lem:lambda1asympt}.
\end{remark}
\begin{remark}
We leave it as an open problem whether the conclusion of part (iii) holds in the case where $\lambda_1(\alpha, \beta, N) = 0$. {Namely, we do not know if the divergence of the opinions occurs in {\em all} directions, or rather only in some subspace of a lower dimension.}
\end{remark}
\begin{remark}
In Proposition~\ref{prop:Nodd} in Section~\ref{S: some properties} we will give closed form analytic expressions for $\lambda_1(\alpha, \beta, N)$ in the cases where $N$ is odd.
\end{remark}

For $t=0,1,2,\dots$ let
\begin{equation}\label{E: M definition}
M(t) = \sqrt{\frac{\rho}{N}} G^{(N-1)}\!(t) + \beta I_{N-1}\,,
\end{equation}
where $\left(G^{(N-1)}\!(t)\right)_{t \ge 0}$ is an i.i.d.~sequence drawn from the rank $(N-1)$ real Ginibre ensemble and $I_{N-1}$ is the $(N-1)\times(N-1)$ identity matrix. The quantity $\lambda_1$ appearing in Theorem~\ref{t1} is in fact the largest Lyapunov exponent of the product sequence $\left(P(t)\right)_{t \ge 1}$ defined by 
$$
P(t)=M(t) \dots M(2) M(1).
$$ 
Indeed, by Section~\ref{Lyapexp}, the limit ${\lambda_1} =\lim_{t\to\infty} \frac1t\log\|P(t)\|$ exists; Theorem~\ref{t1} shows $\lambda_1<0$ iff Model~A reaches consensus a.s., with the critical and supercritical cases treated there.

The next statement describes {\it all} Lyapunov exponents, not just the largest one, including the gap between the consecutive ones.
\begin{theorem}[Properties of the Lyapunov exponents]\label{t2} {\quad}
\begin{itemize}
\item[(a)] For parameters $N, \alpha, \beta$, and $\rho = \alpha(1-\beta)^2$, for each $k=1, \dots, N-1$, the $k^{th}$ Lyapunov exponent $\lambda_k$ of the i.i.d.~random matrix product sequence $\left(P(t)\right)_{t \ge 1}$ is given by 
$$ 
\lambda_k=\lambda_k(\alpha,\beta,N)=\frac{1}{2}\mathbb{E}\left(\log\left(\left(\beta W_k + \frac{\sqrt{\rho}}{\sqrt{N}}X_1\right)^2 + \frac{\rho}{N}\sum_{i=2}^{N-k} X_i^2\right)\right)\,,
$$ 
where $W_k$ is a random variable distributed as the Euclidean length of the projection of the unit vector $e_{k}$ onto the orthogonal complement of the span of the first $k-1$ rows of the matrix $M(1)$, and $X_1, \dots, X_{N-k}$ are i.i.d.~standard Gaussians, independent of $W_k$.
\item[(b)]
The Lyapunov spectrum of $\left(P(t)\right)_{t \ge 1}$ is simple, meaning that the $N-1$ Lyapunov exponents $\lambda_1, \dots, \lambda_{N-1}$ are distinct; moreover
$$\lambda_{k}- \lambda_{k+1} \ge \frac{0.15}{(N-k)\left(\frac{N\beta^2}{\alpha(1-\beta)^2} + N-k\right)}\,,$$ for every $k$ in the range $1 \le k \le N-2$. (See Theorem~\ref{t3} for a stronger result about $\lambda_1 - \lambda_2$.)
\end{itemize}
\end{theorem}

{The interpretation of Theorem~\ref{t2} is that there is a random direction in $\mathbb{R}^d$ in which the opinions line up in the limit $t \to \infty$. The law of this direction is the unique invariant measure on $P(\mathbb{R}^d)$ under the action of the group of linear isometries of the quadratic form $\left(\bar{X}(0)^T\,\bar{X}(0)\right)^{-1}$. The convergence of the population of opinions to one-dimensionality occurs exponentially with rate $\lambda_1 - \lambda_2$; later we will show that $\lambda_1 - \lambda_2$ behaves like   $1/N$ in the case $\beta = 0$, and is at least  $c(\alpha, \beta)/N^2$ in the general case.   }

Our third result concerns a normalized variant of Model~A. Let $e_{max}(t)$ be the largest eigenvalue of the covariance matrix $\mathrm{Cov}^X(t)$. Then define a \emph{normalized} (i.e.~shifted and rescaled) matrix $\hat{X}(t)$ for $t = 0,1,2, \dots$ whose $i^{th}$ row is 
$$\hat{X}_i(t) = e_{max}(t)^{-1/2} \left(X_i(t) - \mu^X(t)\right)\,.$$
This normalized matrix has the properties that its rows sum to zero and the largest eigenvalue of the covariance matrix of the $N$ vectors $\hat{X}_1(t), \dots, \hat{X}_N(t)$ in $\mathbb{R}^d$ is $1$. These constraints mean that the normalized opinions can neither converge to a consensus nor diverge to infinity. The affine invariance of Model~A implies that $\left(\hat{X}(t)\right)_{t \ge 0}$ is a Markov chain on its own. Each transition may be described by applying the transition rule for Model~A to the matrix $\hat{X}(t)$ and then normalizing the resulting matrix.

To understand the dynamics of the normalized model, we are interested not in $\lambda_1$ but in the gap $\lambda_1 - \lambda_2$. In the normalized model, a randomly oriented one-dimensional political spectrum emerges at an exponential rate $\lambda_1 - \lambda_2$.

\begin{theorem}\label{t3}
As $t\to\infty$, $\Cov^X(t)/\Vert \Cov^X(t)\Vert$ converges to a random rank one matrix, regardless of the value of  $\lambda_1(\alpha, \beta, N)$. Equivalently, there is a random line $\ell$ through $0$ in $\mathbb{R}^d$ to which the normalized opinions $\hat{X}_1(t), \dots, \hat{X}_N(t)$ all converge a.s.~as $t \to \infty$. The exponential rate of convergence is $\lambda_1 - \lambda_2$, which satisfies
\begin{align}\label{eq:t3}
\lambda_1-\lambda_2 = \frac1{2N}
\left(1-\left[\frac{\beta^2}{\rho+\beta^2}\right]^2\right)(1+o(1))
\end{align}
as $N \to \infty$ with $\alpha$ and $\beta$ fixed such that $\rho = \alpha(1-\beta)^2 > 0$.
If $v$ denotes a unit vector parallel to $\ell$, then $(\hat{X}_1(t).v, \dots, \hat{X}_N(t).v)$ converges in distribution as $t \to \infty$ to a random variable that is uniformly distributed on the $(N-2)$-dimensional sphere 
$$ \mathbb{S}^{N-2} := \left\{ (x_1, \dots, x_N) \in \mathbb{R}^N \,:\, \sum_{i=1}^N x_i = 0,\;\;\sum_{i=1}^N x_i^2 = 1 \right\}\,.$$
\end{theorem}

{
 It follows from Theorem~\ref{t3} that for any two topics, i.e. coordinates $1 \le j < k  \le d$, we have asymptotically perfect correlation in the people's opinions on those topics:
 \[\mathrm{Corr}(X_{\cdot,j}(t), X_{\cdot,k}(t)) \to \pm 1 \text{ as $t \to \infty$}.\]
The mechanism by which this occurs is that the products of the matrices that update the covariance matrix contract directions in projective space, at asymptotic rate $\lambda_1 - \lambda_2 > 0$, towards a random direction, yielding the rank one covariance matrix and perfect correlation in the limit.
}

Finally, we show that the sequence of covariance matrices itself is a Markov chain. Let $\mathcal{F}^X_t$ be the $\sigma$-algebra generated by $X_0, \dots, X_t$. 
\begin{proposition}\label{prop: Covs are Markov}
The sequence of covariance matrices $\left(\Cov^X(t)\right)_{t \ge 0}$ is a Markov process with respect to its own generated filtration. Moreover, for each $t \ge 1$ the conditional distribution of $\Cov^X(t)$ given $\mathcal{F}^X_{t-1}$ is a function of $\Cov^X(t-1)$ alone. We also have
\begin{align*}
\mathbb{E}(\mathrm{Cov}^X(t) \mid \mathcal{F}^X_{t-1}) &=  \mathbb{E}(\mathrm{Cov}^X(t) \mid X(t-1))= \left(\frac{N-1}{N}\rho + \beta^2\right)\mathrm{Cov}^X(t-1)  \quad \text{and}\\
\mathrm{Var}(\mathrm{Cov}_{ij}^X(t) \mid \mathcal{F}^X_{t-1}) &=  \mathrm{Var}(\mathrm{Cov}_{ij}^X(t) \mid X_{t-1})\\
& \hspace{-8mm}=  \left(\frac{\rho^2(N-1)}{N^2} + 2(\beta-1)^2\frac{\rho}{N}\right)\left(\mathrm{Cov}_{ii}^X(t-1)\, \mathrm{Cov}_{jj}^X(t-1)  + \mathrm{Cov}^X_{ij}(t-1)^2\right)\,.
\end{align*}
\end{proposition}
Fix an $N-1$ by $N$ matrix~$V$ whose rows are orthonormal and all orthogonal to the vector $(1, \dots, 1)$.
Then the second sentence of Proposition~\ref{prop: Covs are Markov} may be rephrased by saying that the mapping $X \to \frac{1}{N} X^T V^TV X$ from the space of real $N\times d$ matrices to the space of non-negative definite symmetric $d$ by $d$ matrices is a \emph{strong lumping} of the Markov process $\left(X(t)\right)_{t \ge 0}$.

\subsection{Continuous-time model (Model~B)}\label{SS: Model~B description}
Model~B is a natural continuous time analogue of Model~A. It has just two parameters: $N$, the number of individuals, and $\gamma \in \mathbb{R}$, which determines the drift of the opinion vectors towards their empirical mean.

Let $B^1, \dots, B^N$ be independent standard $d$-dimensional Brownian motions on the time interval $(0, \infty)$, thought of as row vectors, and let $B_{i,j}(t)$ denote the $j^{\text{th}}$ coordinate of $B^i$ at time $t$, so that $B(t)$ is an $N$ by $d$ matrix. {We continue to assume that $N \ge d+1$.} Let $Z(0)$ be a deterministic or random $N$ by $d$ real matrix whose rows represent vectors $Z_1(0), \dots, Z_N(0) \in \mathbb{R}^d$. We suppose that the initial vectors are not all contained in any affine hyperplane. Then define a continuous-time matrix-valued process $Z(t)_{t \in [0,\infty)}$ to be the unique strong solution of the stochastic differential equation 
\begin{equation}\label{E: Ito model}
d Z_i(t) = -\gamma\left(Z_i(t) - \mu^Z(t)\right) dt +  d B^i(t) T(t) \quad \text{for $i = 1, \dots, N$}\,,\end{equation}
where $$ \mu^Z(t) := \frac{1}{N} \sum_{i=1}^N Z_i(t)\,,$$
and $T(t)$ is the unique non-negative definite symmetric square root of the matrix $\Cov^Z(t)$ that is defined by  
$$ \Cov^Z_{a,b}(t) := \frac{1}{N}\sum_{i=1}^N \left(Z_{i}(t)_a - \mu^Z(t)_a \right)\left(Z_i(t)_b - \mu^Z(t)_b\right)\,.$$

Writing $\overline{Z}(t)$ for the $N$ by $d$ matrix whose $i^{\textup{th}}$ row is $Z_i(t) - \mu^Z(t)$, we have 
$$ 
\Cov^Z(t) = \frac{1}{N} \overline{Z}(t)^T\, \overline{Z}(t)\,.
$$
We can write the stochastic differential equation~\eqref{E: Ito model} fully in matrix form as
\begin{equation}\label{E: SDE} d Z(t) = -\gamma\overline{Z}(t) dt  +   d B(t) T(t)\,.\end{equation}
The existence and uniqueness of the strong solution of the stochastic differential equation~\eqref{E: SDE} will be checked in Lemma~\ref{L: SDE is well posed} in Section~\ref{SecB}.

In order to state our main theorem about Model~$B$, we need some notation.  Let $\mathbf{1}_{k,\ell}$ denote the $k$ by $\ell$ matrix whose entries are all equal to $1$.
\begin{theorem}\label{T: continuous time solution} Let $Z(0)$ be an $N$ by $d$ real matrix such that $V Z(0)$ has rank $d$. Let $G_t$ be the right-invariant Brownian motion on $GL(N-1,\mathbb{R})$ defined by the initial condition $G_0 = I_{N-1}$ and the  Stratonovich form matrix SDE
\begin{equation} \label{E: H def} 
\partial G(t) = \partial \hat{W}(t) G(t)\,,
\end{equation}
where $\hat{W}$ is an $(N-1)$ by $(N-1)$ matrix of independent standard Brownian motions. Then a strong solution of equation~\eqref{E: Ito model} with initial condition $Z(0)$ is obtained by letting
$$ \gamma' = \gamma + \frac{1}{2N}\,,$$
$$ \overline{Z}(0) = V^TVZ(0) = \left(I_N - \frac{1}{N}\mathbf{1}_{N,N}\right)Z(0)\,,$$
$$ \overline{Z}(t) = e^{-\gamma' t} V^T G(t/N) V \overline{Z}(0)\,, \quad \text{and}$$
$$ Z(t) = \overline{Z}(t) +  \frac{1}{N}\mathbf{1}_{N,N}Z(0) + \frac{1}{\sqrt{N}} \mathbf{1}_{N,1} \int_0^t dF(t) T(t) \,,$$
where $F$ is a $1$ by $d$ matrix of standard Brownian motions independent of $\hat{W}$, and $T(t)$ is the unique non-negative definite symmetric square root of $\frac{1}{N} \overline{Z}(t)^T\overline{Z}(t)$.
\end{theorem}
We remark that the right-invariant Brownian motion $G$ has nothing to do with the Ginibre matrices $G^{(N-1)}(t)$ that we use in the analysis of Model~A. Rather, the notation is chosen to be consistent with
Norris, Rogers, and Williams~\cite{NRW}, who showed that the  process $Y(t) : = G(t)^T G(t)$ is a Markov process whose characteristic exponents\footnote{The characteristic exponents of $Y(\cdot)$ are the values $\lim_{t \to \infty} \frac{1}{t} \log e_i(t)$ where $e_1(t) > \dots > e_{N-1}(t)$ are the sorted eigenvalues of $Y(t)$.} are $N - 2i$ for $i = 1,2, \dots, N-1$, and moreover the orthonormal frame of eigenvectors of $Y(t)$ converges a.s.~to a random limit in $O(N-1)$ as $t \to \infty$.  (See also \cite{Newman, LeJan}, which appeared around the same time as \cite{NRW} and contain computations of the characteristic exponents for more general right-invariant diffusions on $\mathrm{GL}(n)$, including this as a special case.) From the leading exponent of $Y$ and the construction in Theorem~\ref{T: continuous time solution}, we may read off the critical value of the drift towards the mean opinion, above which~\eqref{E: Ito model} is stable: 

\begin{corollary}\label{C: critical gamma}
$\overline{Z}(t) \to 0$ and $Z(t)$ converges a.s.~as $t \to \infty$ if $\gamma > \frac{1}{2} - \frac{3}{2N}$. On the other hand, a.s.~$\overline{Z}(t)$ and~$Z(t)$ are both unbounded as~$t \to \infty$ if $\gamma < \frac{1}{2} - \frac{3}{2N}$. 
\end{corollary}
Although we will not give the details, we remark that an analogue of Theorem~\ref{t3} also holds for Model~B, with the main difference being that the exponential rate of convergence is simply $1/N$. Theorem~\ref{T: continuous time solution} and Corollary~\ref{C: critical gamma} are proven in Section~\ref{SecB}. The final result proven there is the following 
\begin{proposition}\label{P: covariance is Markov in model B} 
The process $\mathrm{Cov}^Z(t)$ is a Markov process in its own filtration.
\end{proposition}
For the reader who is primarily interested in Model~B, it may be helpful to know that Section~\ref{SecB} relies very little on Section~\ref{S: analysis of model A}, although it would be a good idea to read the discussion of the matrix~$V$, which precedes Lemma~\ref{lem: Ginibre ensembles} in~Section~\ref{Sec 2.1}, before turning to~Section~\ref{SecB}.

\subsection{Comparison of Model A and Model B}

The SDE~\eqref{E: Ito model} is intended to be a continuous time analogue of the discrete time model~\eqref{E: first definition of model A}. We leave it as a problem for the interested reader to determine whether~\eqref{E: Ito model} may be obtained as a scaling limit of Model~A as $\beta \to 1$ and $\alpha \to \infty$ in a suitable way. The computations of the conditional expectations and conditional variance of $\mathrm{Cov}^X(t)$ given $X(t-1)$ given in Proposition~\ref{prop: Covs are Markov} should be helpful in this task.  We should expect to lose one parameter in passing to the scaling limit, since the choice of the constant in the time rescaling will absorb one degree of freedom.

{
Let us compare the results that we prove about Model~A and Model~B:
\begin{itemize}
\item In both models the empirical covariance matrix of the opinions evolves as a Markov process on its own.
\item In both models we locate the phase transition between contractive and divergent asymptotic behaviour using Lyapunov exponents.
\item In both models the normalized opinions almost surely converge to a random one-dimensional subspace.
\item Model~A has two continuous parameters while Model~B has only one continuous parameter. 
\item 
The analysis of Model~A uses the theory of products of random matrices due to Furstenberg, Guivarc'h and Raugi. On the other hand, analysis of Model~B concerns diffusions on matrix groups and requires a good understanding of stochastic calculus, including the Stratonovich form.
\end{itemize}
}

\subsection{Possible extensions and open problems}\label{S: open problems}
\begin{itemize}
\item Does the statement of part (iii) of Theorem~\ref{t1} also hold for the case when $\lambda_1(\alpha,\beta,N)=0$?
\item  Find an analytic expression (in terms of standard special functions) for $\lambda_1(\alpha, \beta, N)$ when $N$ is even.
\item Study an asynchronous variant of Model~A. In this variant, at each time step just one individual is chosen, uniformly at random, to update their opinion. They update using the same sampling procedure that all the individuals use at each time step in Model~A. 
\item Generalize any of the results to weighted models, or to non-mean-field models, as described below. \end{itemize}

As we remarked above, our Models~A and~B are mean-field models, meaning that each individual is equally influenced by the other individuals. It may be also  interesting to study the following extension of Model~A, which is not mean-field, and simultaneously can be viewed as an extension of the DeGroot learning model. It is related to the model of Stern and Livan~\cite{SternLivan}, but it shares with our Models~A and~B the property of invariance under affine linear transformations of the opinion space. Take a weighted directed graph on $N$ vertices, with loops allowed. The vertices have positive weights $\beta_i$, $i=1, \dots, N$, and the directed edges have non-negative weights $w_{ij}$, $i, j  = 1,\dots,n$, where $w_{ij} = 0$ if the directed edge from $i$ to $j$ is not present in the directed graph, and for each $i$ we have $\sum_{j=1}^N w_{ij} =1$. As in Model~A, each individual has {\it an opinion vector} $X_i(t)$ on day $t$. Conditional on the matrix $X(t-1)$ whose rows are $X_1(t-1), \dots, X_N(t-1)$, the opinion vectors $X_i(t)$, $i=1, \dots, N$, are independent multivariate Gaussian random variables, whose conditional distribution is given by 
$$ 
\mathbb{E}(X_i(t) \,|\, X(t-1)) =  \beta_i X_i(t-1) + (1-\beta_i)\mu_i^X(t-1)\,,
$$
where 
$$
\mu_i^X(t-1) = 
\sum_{j=1}^N w_{ji}X_{j}(t-1) \,, 
$$
and
$$ 
\mathrm{Cov}(X_{ij}(t),X_{ik}(t) \,|\,X(t-1)) =  \alpha (1-\beta_i)^2 \,\sum_{j=1}^N w_{ji} w_{ki} (X_{ij} - \mu_{ij}^X(t-1))(X_{ik} - \mu_{ik}^X(t-1))\,;
$$
here $\mu_{ij}^X(t-1)$ stands for the $j^{th}$ coordinate of $\mu_i^X(t-1)$. This graph structure will allow for more complicated interactions and the emergence of clusters of similar opinions. It will also allow for heterogeneity of individuals and might include a geographical component or be based on a social network. On the other hand, this model would be much harder to tackle analytically.

Model~A is the special case of the above model with $w_{ij} = 1/N$ for all $i,j$ and $\beta_i = \beta$ for all $i$.  On the other hand the deterministic special case where $\alpha=0$ reduces to the well-known DeGroot learning model~\cite{DeGroot}.

We expect that the analysis of Model~A which we give in Section~\ref{S: analysis of model A} in terms of a product of a sequence i.i.d.~Gaussian random matrices may be extended to this more general model. However, the individual matrices will not in general be invariant in law under orthogonal conjugation, and therefore Newman's method for the exact computation of Lyapunov exponents, crucial in our argument, will not be applicable.

\section{Analysis of Model~A}\label{S: analysis of model A}
\subsection{Reduction to an i.i.d.~matrix product}\label{Sec 2.1}
 With some elementary linear algebra, we will reduce the question of convergence or divergence of the opinions to the computation of the maximal Lyapunov exponent of a product of i.i.d. random matrices constructed in a simple way from the real Ginibre ensemble.  The computation of all the Lyapunov exponents for a product of i.i.d.\ matrices drawn from the real Ginibre ensemble itself was performed by Newman \cite{Newman} in 1986, using Furstenberg's formula for Lyapunov exponents in terms of invariant measures. We apply Newman's method to deal with our matrix product. 

For any $k, \ell \ge 1$ let $I_k$ denote the $k$ by $k$ identity matrix, and let 
$$
\mathbf{1}_{k,\ell}=
\underbrace{
\left.
\begin{pmatrix}
    1 & 1& \dots &1 \\
    1 & 1& \dots &1 \\
    \vdots & \vdots& \ddots &\vdots \\
    1 & 1& \dots &1 
\end{pmatrix}
\right\}}
_{\text{$\ell$ columns}}
\text{$k$ rows}
$$
denote the $k$ by $\ell$ matrix with all entries equal to $1$. We will use $\mathbf{0}_{k,\ell}$ to denote the zero matrix with $k$ rows and $\ell$ columns, when it is helpful to emphasize the dimensions, but may simply write $0$ otherwise.

Let $W$ be any $N\times N$ square orthogonal matrix with final row  $\left(\frac{1}{\sqrt{N}}, \dots, \frac{1}{\sqrt{N}}\right)$. Thus the first $N-1$ rows of $W$ form an orthonormal basis for the vector subspace 
$$
S_N:=\{x \in \mathbb{R}^N \,:\, x_1 + \dots + x_N = 0\}\,.
$$ 
Let $V$ be the $N-1\times N$ submatrix of $W$ containing just the first $N-1$ rows. Then
$$ \mathbf{1}_{1,N} V^T = \mathbf{0}_{1,N}\,,$$
$$ 
V\,V^T = I_{N-1} 
$$
and 
$$
V^T V=
\frac1{N}\begin{pmatrix}
    N-1 & -1& \dots &-1 \\
    -1 & N-1& \dots &-1 \\
    \vdots & \vdots& \ddots &\vdots \\
    -1 & -1& \dots &N-1
\end{pmatrix}
=I_N-\frac1{N} \mathbf{1}_{N,N}\,.
$$
$V^TV$ is an $N\times N$ matrix of rank $N-1$, such that the sum of the elements in each row and each column equals~$0$, and such that for any column vector $w \in S_N$, $(V^T V) w = w$; since $ \mathbf{1}_{N,N} w$ is a zero vector. Consequently,  $V^T V$ is the orthogonal projector from $\mathbb{R}^N$ onto $S_N$. We will use the fact that $V^T V$ is idempotent: $$\left(V^T V\right)^2 = V^T(VV^T)V = V^T V\,.$$

\begin{lemma}\label{lem: Ginibre ensembles}
Let $N \ge 2$ and let $G^{(N)}$ be a random matrix drawn from the rank $N$ real Ginibre ensemble. (This means that $G^{(N)}$ is an $N$ by $N$ matrix with independent standard normal entries.) Let $G^{(N-1)}$ be a random matrix whose distribution is the rank $N-1$ real Ginibre ensemble. Let $F$ be an $N$ by $N-1$ matrix with independent standard Gaussian entries. Then 
$$
W G^{(N)} W^T \stackrel{d}{=} G^{(N)}\,,
$$
$$ 
W G^{(N)} V^T \stackrel{d}{=} F\,,\; \text{ and}
$$
$$
VG^{(N)} V^T \stackrel{d}{=} G^{(N-1)}\,.
$$
\end{lemma}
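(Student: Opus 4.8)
The plan is to derive all three identities from a single fact: the standard Gaussian measure on the space of $N \times N$ real matrices, equipped with the Frobenius inner product $\langle A, B\rangle = \mathrm{tr}(A^T B)$, is invariant under the linear maps $A \mapsto O_1 A$ and $A \mapsto A O_2$ for any fixed orthogonal matrices $O_1, O_2 \in O(N)$. Indeed, $G^{(N)}$ has density proportional to $\exp(-\tfrac12 \|A\|_F^2)$ with respect to Lebesgue measure on $\mathbb{R}^{N\times N}$, and $A \mapsto O_1 A O_2$ both preserves $\|A\|_F$ (since $\|O_1 A O_2\|_F^2 = \mathrm{tr}(O_2^T A^T O_1^T O_1 A O_2) = \mathrm{tr}(A^T A)$) and has unit Jacobian. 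In particular, since $W$ and $W^T$ are orthogonal, $W G^{(N)} W^T \stackrel{d}{=} G^{(N)}$, which is the first identity.

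Then I would obtain the other two by extracting sub-blocks of $W G^{(N)} W^T$. Writing $W = \begin{pmatrix} V \\ \tfrac{1}{\sqrt N}\mathbf{1}_{1,N}\end{pmatrix}$, and hence $W^T = \begin{pmatrix} V^T & \tfrac{1}{\sqrt N}\mathbf{1}_{N,1}\end{pmatrix}$, one sees that the submatrix formed by the first $N-1$ columns of $W G^{(N)} W^T$ is precisely $W G^{(N)} V^T$, and that its top-left $(N-1)\times(N-1)$ block is precisely $V G^{(N)} V^T$. Because $W G^{(N)} W^T$ has the same joint law as $G^{(N)}$, that is, as an $N\times N$ array of i.i.d.\ standard normals, every one of its sub-arrays consists of i.i.d.\ standard normals of the appropriate shape. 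Hence $W G^{(N)} V^T \stackrel{d}{=} F$ (an $N \times (N-1)$ array of i.i.d.\ standard normals) and $V G^{(N)} V^T \stackrel{d}{=} G^{(N-1)}$ (an $(N-1)\times(N-1)$ such array). That is the whole argument.

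There is essentially no serious obstacle here; the one point that requires care is that one must invoke the invariance of the \emph{joint} law $W G^{(N)} W^T \stackrel{d}{=} G^{(N)}$, not merely marginal statements about individual entries, since the two non-trivial identities concern specified sub-blocks. As a more hands-on alternative that avoids passing through the square case, one can argue directly: for fixed vectors $v, v' \in \mathbb{R}^N$ the vectors $G^{(N)} v^T$ and $G^{(N)} v'^T$ are jointly Gaussian with $\mathrm{Cov}\bigl((G^{(N)}v^T)_k, (G^{(N)}v'^T)_{k'}\bigr) = \langle v, v'\rangle\, \delta_{k k'}$; applying this to the orthonormal rows $v_1, \dots, v_{N-1}$ of $V$ shows that the $N(N-1)$ entries of $G^{(N)} V^T$ are i.i.d.\ standard normal, so $G^{(N)} V^T \stackrel{d}{=} F$, and then left-multiplication by the orthogonal matrix $W$ (respectively, restriction to the first $N-1$ rows, i.e.\ left-multiplication by $V$) preserves an i.i.d.\ standard normal array (respectively, yields a sub-array of one), giving the remaining two identities.
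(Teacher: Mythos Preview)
Your proof is correct and follows essentially the same approach as the paper: establish $W G^{(N)} W^T \stackrel{d}{=} G^{(N)}$ by orthogonal invariance of the Ginibre ensemble, then extract the first $N-1$ columns and the top-left $(N-1)\times(N-1)$ block. The only cosmetic difference is that the paper verifies the invariance by computing covariances of the entries of $U G^{(N)} U^T$, whereas you argue via the Frobenius-norm density and unit Jacobian; both are standard.
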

\begin{proof}
It is well known that the real Ginibre ensembles are invariant under conjugation by orthogonal matrices. This is easy to check using the fact that the law of a mean zero Gaussian vector is determined by the covariances of its entries. Indeed, for any orthogonal matrix $U$ that is either deterministic or random but independent of $G^{(N)}$ we have
\begin{align*}
\mathbb{E}((U G^{(N)} U^T)_{i,j}&(U G^{(N)} U^T)_{i',j'})  \;=\; \mathbb{E} \sum_{k,l,k',l' = 1}^{N} U_{i,k} U_{i',k'} G^{(N)}_{k,l} G^{(N)}_{k',l'} U_{j,l} U_{j',l'}\\ 
& = \sum_{k=1}^N U_{i,k}U_{i'k} \sum_{l=1}^N U_{j,l}U_{j',l} 
 \;=\; (UU^T)_{i,i'}\,(UU^T)_{j,j'} \;=\; \delta_{i,i'}\delta_{j,j'}\,.
\end{align*}
Taking $U = W$, this implies 
\begin{equation}\label{E: Ginibre invariance} W G^{(N)} W^T  = \tilde{G}\,,\end{equation}
where $\tilde{G} \stackrel{d}{=} G^{(N)}$.  Removing the final columns on both sides of~\eqref{E: Ginibre invariance} we obtain
\begin{equation}\label{E: truncated Ginibre invariance} W G^{(N)} V^T  = F \end{equation}
and removing the final rows on both sides of~\eqref{E: truncated Ginibre invariance} we obtain
{$$ V G^{(N)} V^T = G^{(N-1)}\,.$$}
\end{proof}

Let $X(0)$ be the $N\times d$ matrix whose rows are the vectors $X_1(0), \dots, X_N(0)$. Let $\bar{X}(0) = V^TV X(0)$.  Thus $\bar{X}(0)$ is the $N$ by $d$ matrix whose rows are $X_1(0) - \mu(0)$, $X_2(0) - \mu(0)$, $\dots$, $X_N(0) - \mu(0)$. The sum of the rows of $\bar{X}(0)$ is $0 \in \mathbb{R}^d$; in other words each column of $\bar{X}(0)$ belongs to $S_N$. We can express this in matrix form as
$ \mathbf{1}_{1,N} \bar{X}(0) = 0$.

\begin{lemma}\label{LS: X and Xdash} Let $G^{(N)}(t)$, $t=1,2,\dots$, be i.i.d. random matrices drawn from the rank $N$ real Ginibre ensemble, and for each $t \ge 1$ define
$$
S(t) :=  \frac1N{\bf 1}_{N,N} +
\left[\beta I_N
+\sqrt{\frac{\rho}N} G^{(N)}(t)  \right]V^T V\,, 
$$
where $\rho$ is given by~\eqref{defrho} and $\alpha,\beta$ are introduced around~\eqref{E: first definition of model A}.
Then the sequence $\left(X(t)\right)_{t \ge 0}$  defined inductively from the starting $N$ by $d$ matrix $X(0)$ by
\begin{align} \label{XX1}
X(t)= S(t) X(t-1)
\end{align}
has the distribution described in Model~A, meaning that we can take the vectors $X_1(t), \dots, X_N(t)$ in Model~A to be the~$N$ rows of $X(t)$, for each $t \ge 0$. 
\end{lemma}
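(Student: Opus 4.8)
The plan is to expand the recursion~\eqref{XX1} and simply read off the conditional law of each row of $X(t)$. First I would use the formula $V^TV = I_N - \frac{1}{N}\mathbf{1}_{N,N}$ recorded before the lemma to recognise that $V^TV X(t-1) = \bar X(t-1)$ is the centred matrix whose $i^{th}$ row is $X_i(t-1) - \mu^X(t-1)$, and that $\frac{1}{N}\mathbf{1}_{N,N}X(t-1) = \mathbf{1}_{N,1}\mu^X(t-1)$ is the matrix every row of which equals $\mu^X(t-1)$. Substituting the definition of $S(t)$ into~\eqref{XX1} then gives
\[
X(t) = \mathbf{1}_{N,1}\mu^X(t-1) + \beta\,\bar X(t-1) + \sqrt{\tfrac{\rho}{N}}\,G^{(N)}(t)\,\bar X(t-1)\,,
\]
so that the $i^{th}$ row of $X(t)$ is
\[
X_i(t) = \beta X_i(t-1) + (1-\beta)\mu^X(t-1) + \xi_i(t)\,,\qquad \xi_i(t):=\sqrt{\tfrac{\rho}{N}}\sum_{j=1}^N G^{(N)}(t)_{i,j}\,\bar X_j(t-1)\in\R^d\,.
\]
The deterministic part already matches the conditional mean prescribed by Model~A.

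Next I would observe that, since $S(t)$ is a measurable function of $G^{(N)}(t)$ alone, and $G^{(N)}(1),G^{(N)}(2),\dots$ are i.i.d.\ and independent of $X(0)$, the sequence $\left(X(t)\right)_{t\ge0}$ is automatically a discrete-time Markov chain; so only its one-step transition kernel needs to be identified. Conditioning on $X(t-1)$ freezes $\bar X(t-1)$ and leaves $G^{(N)}(t)$ distributed as a rank-$N$ real Ginibre matrix independent of $X(t-1)$, whence the whole noise block $\xi(t) = (\xi_1(t),\dots,\xi_N(t))$ is a fixed linear image of a Gaussian matrix and is therefore jointly centred Gaussian conditionally on $X(t-1)$. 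It then remains only to compute the conditional second moments from $\E[G^{(N)}(t)_{i,j}G^{(N)}(t)_{k,l}\mid X(t-1)] = \delta_{i,k}\delta_{j,l}$: for $i\ne k$ one finds $\E[\xi_{i,a}(t)\xi_{k,b}(t)\mid X(t-1)] = 0$, so distinct rows of $\xi(t)$ are uncorrelated and hence (being jointly Gaussian) conditionally independent, while within a single row $\E[\xi_{i,a}(t)\xi_{i,b}(t)\mid X(t-1)] = \frac{\rho}{N}\big(\bar X(t-1)^T\bar X(t-1)\big)_{a,b} = \rho\,\Cov^X_{a,b}(t-1)$. This shows that, conditionally on $X(t-1)$, the rows $X_1(t),\dots,X_N(t)$ are independent with $X_i(t)$ multivariate Gaussian of mean $\beta X_i(t-1)+(1-\beta)\mu^X(t-1)$ and covariance $\rho\,\Cov^X(t-1)$, which is precisely the transition law of Model~A, so we may indeed take the rows of $X(t)$ to be the vectors $X_1(t),\dots,X_N(t)$.

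I do not expect a genuine obstacle here: the argument is essentially a verification. The only step that uses the structure of $S(t)$ rather than being formal bookkeeping is the vanishing of the cross-row covariances of $\xi(t)$, which relies on the mutual independence of the rows of the Ginibre matrix $G^{(N)}(t)$; this is exactly why placing the identity $I_N$ (and not some row-mixing matrix) inside the bracket in the definition of $S(t)$, together with left multiplication by $G^{(N)}(t)$, reproduces the independence of the $N$ individuals' updates that is built into Model~A. The one point to handle carefully is making sure the factor $V^TV$ is interpreted correctly so that $\bar X(t-1)$ genuinely is the centred matrix, which follows at once from the idempotency of $V^TV$ and the explicit formula $V^TV = I_N - \frac{1}{N}\mathbf{1}_{N,N}$.
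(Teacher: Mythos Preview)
Your argument is correct and essentially identical to the paper's: both expand $S(t)X(t-1)$ using $V^TV = I_N - \tfrac{1}{N}\mathbf{1}_{N,N}$, identify the noise in row $i$ as $\sqrt{\rho/N}$ times the $i^{\text{th}}$ row of $G^{(N)}(t)$ applied to $\bar X(t-1)$, and then compute the conditional covariance row-by-row using the independence of the rows of the Ginibre matrix. The only cosmetic difference is direction: you expand $S(t)$ first and read off the row laws, whereas the paper writes down the required row laws (via the vectors $\eta^{(k)} = G_k\bar X$) and then assembles them back into the matrix form of $S(t)$.
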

\begin{proof}  
Let $t \ge 1$. For simplicity of notation, we will write $X = X(t-1)$,  $\mu = \frac{1}{N}\mathbf{1}_{1,N} X$, $\bar{X} = V^T V X$, $G = G^{(N)}(t)$,  and $X_{new}:=X(t)$, $\mu_{new} = \frac{1}{N}\mathbf{1}_{1,N} X_{new}$, $\bar X_{new} = V^T V X_{new}$. Then $\mu$ equals each row of $\frac1N{\bf 1}_{N,N} X$, and thus
$$
\E \left[X_{new}\mid X\right]=\left[\beta I_N+\frac{1-\beta}N{\bf 1}_{N,N} \right]X.
$$
For each $k \in \{1, \dots, N\}$, let $G_k$ denote the $k^{th}$ row of $G$ and let $\eta^{(k)} = G_k \bar X$. 
$$
\Cov\left(\eta^{(k)}_i,\eta^{(k)}_j\right)=
\Cov\left(\sum_{m=1}^N  G_{k,m} \bar  X_i^{(m)} ,\sum_{l=1}^N  G_{k,l} \bar  X_j^{(l)}\right)=
\sum_{m=1}^N \bar  X_{m,i}\bar  X_{m,j} =N\cdot \Cov^X_{i,j}\,.
$$
Also, conditional on $X(t-1) = X$, the row vectors $\eta^{(1)}, \dots, \eta^{(N)}$ are independent, because the rows of $G$ are independent. So if
$$
X_{k,new}=\E \left[X_{k,new}\mid X\right]+\sqrt{\frac{\rho}N}\eta^{(k)}.
$$
then $X_{k,new}$ indeed has the distribution required by Model~A. The matrix whose rows are $\eta^{(k)}$, $k=1,2,\dots,N$, equals $G\bar X$. Hence, by combining the above, we get
\begin{align*} 
X_{new}&=\left[\beta I_N+\frac{1-\beta}N{\bf 1}_{N,N} \right]X
+\sqrt{\frac{\rho}N} G \bar X
=\left[\beta I_N+\frac{1-\beta}N{\bf 1}_{N,N} \right]X
+\sqrt{\frac{\rho}N} G V^T V X
\\ 
&=\left[\frac1N{\bf 1}_{N,N} +
\beta V^T V\right]X
+\sqrt{\frac{\rho}N} G V^T V X
\\ &
=\frac1N{\bf 1}_{N,N} X+
\left[\beta I_N
+\sqrt{\frac{\rho}N} G  \right]V^T V X
= S(t) X.
\end{align*}
\end{proof}
From now on we will work with the sequence $\left(X(t)\right)_{t \ge 0}$ that is generated from the initial value $X(0)$ and the i.i.d. random matrix sequence $\left(G^{(N)}(t)\right)_{t \ge 1}$, as described in Lemma~\ref{LS: X and Xdash}. We define a filtration by letting~$\mathcal{F}_t$ be the $\sigma$-algebra generated by $X(0)$ and $\left(G^{(N)}(i)\right)_{1 \le i \le t}$, for each $t \ge 0$. We write $\mu(t) = \frac{1}{N} \mathbf{1}_{N,N} X(t)$ for the empirical mean vector at time $t$, which was denoted $\mu^X\!(t)$ in the description of Model~A. We write $\bar{X}(t) = V^TV X(t)$ for the $N$ by $d$ matrix whose rows are $X_1(t) - \mu(t)$, $X_2(t) - \mu(t)$, $\dots$, $X_N(t) - \mu(t)$. For each $t \ge 0$, the sum of the rows of $\bar{X}(t)$ is $0 \in \mathbb{R}^d$; in other words, the columns of $\bar{X}(t)$ belong to $S_N$. We can express this as $ \mathbf{1}_{1,N} \bar{X}(t) = 0$. 
\begin{corollary}
We have
\begin{align*} 
\bar{X}(t)&=
V^T V \left[\beta I_N
+\sqrt{\frac{\rho}N} G^{(N)}(t)  \right]\,\bar{X}(t-1).
\end{align*}
\end{corollary}
\begin{proof}
Since $V^T V{\bf 1}_{N,N}=0$, from~\eqref{XX1} we have, for each $t \ge 1$,
\begin{align*}
\bar{X}(t) &= V^T V X (t) =
V^T V \left[\beta I_N
+\sqrt{\frac{\rho}N} G^{(N)}(t)  \right] V^T V X(t-1) \\
& =V^T V \left[\beta I_N
+\sqrt{\frac{\rho}N} G^{(N)}(t)  \right] \bar X(t-1)\,.
\end{align*}
\end{proof}

\begin{lemma}\label{lem: L(t)}
For each $t \ge 1$, let $L(t) := W S(t) W^T$. Then the entries of $L(t)$ are independent and may be represented as
$$ L_{i,j}(t) =  \begin{cases}
 \sqrt{\frac{\rho}N} F_{i,j}(t) + \beta \delta_{i,j} & \text{if $1 \le j \le N-1$,}\\
 \delta_{i,N} &\text{if $j = N$.} \end{cases}$$
 Here $\delta_{i,j}$ is $1$ if $i=j$ and $0$ otherwise, and $\left(F(t)\right)_{t \ge 0}$ is an i.i.d.~sequence of $N$ by $N-1$ matrices with independent standard Gaussian entries.
\end{lemma}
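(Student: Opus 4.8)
The plan is to substitute the definition of $S(t)$ into $L(t) = W S(t) W^T$ and simplify the three resulting summands using only the identities for $W$ and $V$ recorded above, together with Lemma~\ref{lem: Ginibre ensembles}. No probabilistic input beyond that lemma is needed.

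First I would record how the building blocks transform under conjugation by $W$. Since the last row of $W$ equals $\tfrac1{\sqrt N}\mathbf{1}_{1,N}$, we have $\mathbf{1}_{N,N} = N\, W^T e_N e_N^T W$, so $\tfrac1N W\mathbf{1}_{N,N}W^T = e_N e_N^T$. Combining this with $V^TV = I_N - \tfrac1N\mathbf{1}_{N,N}$ gives $W V^TV W^T = I_N - e_N e_N^T = \mathrm{diag}(1,\dots,1,0)$, and hence $\beta\, W V^TV W^T$ is $\beta$ times that diagonal matrix. For the Ginibre term I would insert $W^TW = I_N$ to write $W G^{(N)}(t)V^TV W^T = \bigl(W G^{(N)}(t)V^T\bigr)\bigl(V W^T\bigr)$ and observe that $V W^T = [\,I_{N-1}\mid \mathbf{0}_{N-1,1}\,]$, so this product equals $\bigl[\,W G^{(N)}(t)V^T\ \big|\ \mathbf{0}_{N,1}\,\bigr]$, i.e.\ the matrix $W G^{(N)}(t)V^T$ with a zero column appended.

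Now Lemma~\ref{lem: Ginibre ensembles} says precisely that $F(t) := W G^{(N)}(t)V^T$ has independent standard Gaussian entries, and since $\bigl(G^{(N)}(t)\bigr)_{t\ge1}$ is i.i.d.\ so is $\bigl(F(t)\bigr)_{t\ge1}$. Adding the three simplified summands gives
\begin{equation*}
L(t) = e_N e_N^T + \beta\bigl(I_N - e_N e_N^T\bigr) + \sqrt{\tfrac{\rho}{N}}\,\bigl[\,F(t)\ \big|\ \mathbf{0}_{N,1}\,\bigr]\,,
\end{equation*}
from which reading off the $(i,j)$ entry yields $L_{i,j}(t) = \sqrt{\rho/N}\,F_{i,j}(t) + \beta\delta_{i,j}$ for $1\le j\le N-1$ and $L_{i,N}(t) = \delta_{i,N}$, as claimed. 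Independence of all entries is then immediate: the column-$N$ entries are deterministic, and each entry $L_{i,j}(t)$ with $1\le j\le N-1$ is an affine function of the single Gaussian $F_{i,j}(t)$, these being mutually independent across all such $(i,j)$.

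The argument has no real obstacle; the only point requiring care is the bookkeeping in the conjugation step — in particular verifying $\tfrac1N W\mathbf{1}_{N,N}W^T = e_N e_N^T$ and $V W^T = [\,I_{N-1}\mid\mathbf 0\,]$ — after which the stated formula drops out directly.
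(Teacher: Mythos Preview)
Your proof is correct and follows essentially the same approach as the paper's: split $S(t)$ into its deterministic and Ginibre parts, conjugate each by $W$, and identify $WG^{(N)}(t)V^T$ with $F(t)$ via Lemma~\ref{lem: Ginibre ensembles} while noting that $VW^T=[\,I_{N-1}\mid\mathbf 0\,]$ appends a zero column. The only cosmetic difference is that the paper first rewrites $\tfrac1N\mathbf 1_{N,N}+\beta V^TV=\beta I_N+\tfrac{1-\beta}{N}\mathbf 1_{N,N}$ before conjugating to get $\mathrm{Diag}(\beta,\dots,\beta,1)$, whereas you conjugate the two pieces separately to obtain $e_Ne_N^T+\beta(I_N-e_Ne_N^T)$, which is of course the same matrix.
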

\begin{proof}
We have
\begin{equation}\label{eq: WSWT} 
WS(t)W^T   =  W\left[\frac{1}{N}\mathbf{1}_{N,N} + \beta V^T V\right]W^T + \sqrt{\frac{\rho}{N}} W G^{(N)}(t) V^T V W^T\,.
\end{equation}
For the first term on the RHS of~\eqref{eq: WSWT}, we have
$$
W\left[\frac{1}{N}\mathbf{1}_{N,N} + \beta V^T V\right]W^T = W\left[\beta I_N + \frac{1-\beta}{N} \mathbf{1}_{N,N} \right]W^T  = \mathrm{Diag}(\beta, \dots, \beta, 1)\,.
$$
Here $\mathrm{Diag}(\beta, \dots, \beta,1)$ denotes the $N$ by $N$ diagonal matrix whose diagonal entries are all equal to $\beta$ except for the last one, which is equal to $1$. For the second term on the RHS of~\eqref{eq: WSWT}, by the second equation of Lemma~\ref{lem: Ginibre ensembles} we may equate $W G^{(N)}(t)V^T$ with $F(t)$. Now, $VW^T$ is $I_N$ with its final row removed, or equivalently $I_{N-1}$ with an extra column of zeros added on its right.  Hence $WG^{(N)}(t)V^T VW^T$ is equal to $F(t)$ with an extra final column of zeros. 
\end{proof}

Lemma~\ref{lem: L(t)} gives us an alternative representation of the sequence $\left(X(t)\right)_{t \ge 0}$ in terms of the initial value $X(0)$ and the i.i.d.~sequence $\left(L(t)\right)_{t \ge 0}$: for each $t \ge 1$ we have
$$ 
X(t) = W^T L(t) W X(t-1)\,,
$$
or
$$ 
WX(t) = L(t) W X(t-1)\,.
$$
The matrix consisting of the first $N-1$ rows of $WX(t)$ is $VX(t)$. Note that each $L(t)$ is a block-lower-triangular matrix, with two blocks of size $N-1$ and $1$, so $\left(VX(t)\right)_{t \ge 0}$ is a Markov chain on its own: 
if we denote by $M(t)$  the top-left $(N-1)\times(N-1)$ submatrix of $L(t)$, then 
$$ 
\left(VX(t)\right) = M(t) \left(V X(t-1)\right)\,,
$$ 
For every $t \ge 0$ we have 
$$ 
VX(t) = (VV^T)VX(t) = V(V^T V) X(t) = V \bar X(t)\,.
$$ 
{Note that $V \bar X(t)$ determines $\bar X(t)$ and $\bar X(t)$ determines $V \bar X(t)$. This is because left-multiplication by $V$ acts as an injective linear map on the subspace $S_N$.} Hence $\left(\bar X(t)\right)_{t \ge 0}$  is also a Markov chain on its own. Indeed, it satisfies
\begin{equation}\label{E: Xbar recursion} 
\bar{X}(t) = V^T VX(t) = V^TM(t)VX(t-1) =  V^T M(t) V \bar{X}(t-1)\,.
\end{equation}
Since $V^TV = I_N - \frac{1}{N}\mathbf{1}_{N,N} = I_N - \frac{1}{N}\mathbf{1}_{N,1}\mathbf{1}_{1,N}$, for each $t \ge 0$ we have
\begin{equation}\label{E: X(t) as a sum} 
X(t) = \bar{X}\!(t) + \mathbf{1}_{N,1}\mu(t)\,.
\end{equation}
We may check from the definition of $S(t)$ that
$$ 
S(t) \mathbf{1}_{N,1} = \mathbf{1}_{N,1}\,. 
$$
(By Lemma~\ref{lem: L(t)} this is equivalent to the statement that the final column of $L(t)$ is $(0, \dots, 0,1)^T$.)
Hence
$$
X(t) = S(t) X(t-1) = S(t) \bar{X}(t-1)  + S(t)\mathbf{1}_{N,1} \mu(t-1) = S(t) \bar{X}(t-1)  + \mathbf{1}_{N,1} \mu(t-1)\,.
$$
An induction now shows that for each $t \ge 0$ we have
$$ 
X(t) = S(t) \dots S(1) \bar{X}(0) + \mathbf{1}_{N,1} \mu(0)\,.
$$
Recall $M(t)$ defined by~\eqref{E: M definition}
and note that $G^{(N-1)}(t)$ is obtained from $F(t)$ by removing its final row. Define now
$$
\bar{S}(t) = V^T M(t) V\,.
$$  Then~\eqref{E: Xbar recursion} becomes 
\begin{equation}\label{eq:recur}
\bar{X}(t) = \bar{S}(t) \bar{X}(t-1) = V^{T}\left(
\sqrt{\frac{\rho}N}G^{(N-1)}\!(t)
+ \beta I_{N-1}\right)V \bar{X}(t-1)\,.  
\end{equation} 
or, using the fact that $V^T V \bar X(t-1) = (V^T V)^2 X(t-1) =  (V^T V) X(t-1) = \bar X(t-1)$,  
\begin{equation}\label{E: X tilde} 
\bar{X}(t) = 
\sqrt{\frac{\rho}N}
V^T G^{(N-1)}\!(t)\,V\,\bar{X}(t-1) \,+\, \beta \bar{X}(t-1)\,.\end{equation}
Another calculation shows that
\begin{equation}\label{eq: mu increment}
\mu(t) - \mu(t-1) = \frac{\sqrt{\rho}}{N} F_N(t)\, V \,\bar{X}(t-1) \,,
\end{equation}
where $F_N(t)$ is the final row of $F(t)$. We see that for every $t \ge 1$, the random variables $\mu(t) - \mu(t-1)$, $\bar X(t)$, and $\left(X(s)\right)_{0 \le s < t-1}$ are conditionally independent given $\bar X(t-1)$. Since $\bar{X}(t-1)$ and $\mu(t-1)$ are both functions of $X(t-1)$, it follows that the two terms on the right-hand side of~\eqref{E: X(t) as a sum} are conditionally independent given~$X(t-1)$.

From~\eqref{eq: mu increment} we may check that the conditional distribution of $\mu(t) - \mu(t-1)$ given $\bar{X}(t-1)$ is a d-dimensional Gaussian with mean $0$ and covariance $\frac{\rho}{N^2} \bar X(t-1)^T \bar X(t-1) = \frac{\rho}{N} \Cov^X(t-1)$.

Iterating~\eqref{eq:recur} and using $VV^T = I_N$, for each $t \ge 0$ we obtain
\begin{equation}\label{eq: Xbar in terms of Lbar product} 
\bar{X}(t)  =  \bar{S}(t) \dots \bar{S}(1) \bar{X}(0) = V^T M(t) \dots M(1) V \bar{X}(0)\,.
\end{equation}
(In the case $t=0$, the empty product of matrices is to be interpreted as the identity matrix $I_N$.)
Using equation~\eqref{eq: mu increment} to evaluate the telescoping sum $\mu(t) - \mu(0) = \sum_{r = 1}^t \mu(r) - \mu(r-1)$, it follows that
\begin{equation}\label{eq: complicated} 
X(t) = \bar{S}(t) \dots \bar{S}(1)\bar{X}(0) \;+\; \mathbf{1}_{N,1}\mu(0) \;+\; \mathbf{1}_{N,1}\sum_{r=1}^t \sqrt{\frac{\rho}{N}} F_N(r) V \bar{S}(r-1) \dots \bar{S}(1)\bar{X}(0)\,.
\end{equation}
Since the random variables $\bar{S}(1), \bar{S}(2), \dots $ and $F_N(1), F_N(2), \dots$ are independent, the formulation~\eqref{eq: complicated} makes the conditional independence structure explicit. We have shown the following statement, which will be useful later:

\begin{lemma}\label{lem: conditional independence}
The sequence $\left(\overline{X}(t)\right)_{t \ge 0}$ is a Markov chain on its own, and conditional on $\left(\overline{X}(t)\right)_{t \ge 0}$ the sequence of mean increments $\left(\mu(t) - \mu(t-1)\right)_{t \ge 1}$ is a sequence of independent Gaussians with covariance matrices given by
$$ 
\Cov\left(\mu(t) - \mu(t-1) \,|\, \left( \overline{X}(t)\right)_{t \ge 0}\right)\;=\; \frac{\rho}{N} \Cov^X(t-1)\,.
$$
\end{lemma}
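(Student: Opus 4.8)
\emph{Proof plan.} Almost all the ingredients have already been assembled in the discussion preceding the lemma, so the plan is to organise them around a careful choice of conditioning $\sigma$-algebra. I would first record the Markov property of $\left(\overline{X}(t)\right)_{t \ge 0}$: equation~\eqref{eq:recur} gives $\overline{X}(t) = V^T M(t) V \overline{X}(t-1)$, and the matrices $M(1), M(2), \dots$ are i.i.d.\ with $M(t)$ independent of $\bigl(\overline{X}(0), M(1), \dots, M(t-1)\bigr)$, hence of $\bigl(\overline{X}(0), \dots, \overline{X}(t-1)\bigr)$. The standard argument then shows that $\left(\overline{X}(t)\right)_{t \ge 0}$ is a Markov chain with respect to its own filtration.

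For the second assertion, I would set $\mathcal{G} := \sigma\bigl(\left(\overline{X}(t)\right)_{t \ge 0}\bigr)$ and $\mathcal{H} := \sigma\bigl(\overline{X}(0), \left(M(t)\right)_{t \ge 1}\bigr)$; the recursion~\eqref{eq:recur} shows $\mathcal{G} \subseteq \mathcal{H}$. The key structural fact, which I would read off from Lemma~\ref{lem: L(t)}, is that the final rows $F_N(1), F_N(2), \dots$ of the matrices $F(1), F(2), \dots$ form an i.i.d.\ sequence of standard Gaussian row vectors in $\mathbb{R}^{N-1}$ that is independent of $\mathcal{H}$: at each time $t$ the submatrix $M(t)$ is built only from rows $1, \dots, N-1$ of $F(t)$, whereas $F_N(t)$ is the $N^{\textup{th}}$ row, and all the entries of all the $F(t)$ are independent. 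Consequently $\left(F_N(t)\right)_{t \ge 1}$ is independent of $\mathcal{G}$ as well, and remains a jointly independent i.i.d.\ sequence after conditioning on $\mathcal{G}$.

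Then I would combine this with~\eqref{eq: mu increment}, which says $\mu(t) - \mu(t-1) = \frac{\sqrt{\rho}}{N} F_N(t)\, V \overline{X}(t-1)$. Conditionally on $\mathcal{G}$ the matrix $A_t := V \overline{X}(t-1)$ is deterministic, while the $F_N(t)$ remain independent standard Gaussians, so the increments $\left(\mu(t) - \mu(t-1)\right)_{t \ge 1}$ are, conditionally on $\mathcal{G}$, jointly independent centred Gaussian row vectors with
\begin{align*}
\Cov\bigl(\mu(t) - \mu(t-1) \mid \mathcal{G}\bigr)
&= \frac{\rho}{N^2}\, A_t^T A_t = \frac{\rho}{N^2}\,\overline{X}(t-1)^T V^T V\, \overline{X}(t-1)\\
&= \frac{\rho}{N^2}\,\overline{X}(t-1)^T \overline{X}(t-1) = \frac{\rho}{N}\, \Cov^X(t-1)\,,
\end{align*}
using $V^T V \overline{X}(t-1) = \overline{X}(t-1)$ and $\overline{X}(t-1)^T \overline{X}(t-1) = N\, \Cov^X(t-1)$.

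The one point that needs care --- and the reason it pays to introduce $\mathcal{H}$ --- is the claim that conditioning on the \emph{entire} trajectory $\left(\overline{X}(t)\right)_{t \ge 0}$, and not merely on $\overline{X}(t-1)$, leaves the conditional law of the increments unchanged; this is exactly what the independence of $\left(F_N(t)\right)_{t \ge 1}$ from $\mathcal{H} \supseteq \mathcal{G}$ provides, and it is why the block-lower-triangular structure isolated in Lemma~\ref{lem: L(t)} is the right tool here. Everything else is bookkeeping with $VV^T = I_{N-1}$ and the idempotence of $V^T V$ on $S_N$.
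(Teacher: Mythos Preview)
Your argument is correct and follows the same route as the paper: the paper does not give a separate proof of this lemma but presents it as a summary of the preceding discussion, which assembles exactly the ingredients you use---the recursion~\eqref{eq:recur} for the Markov property of $(\overline{X}(t))$, the formula~\eqref{eq: mu increment} for the mean increments, and the independence of the last rows $F_N(t)$ from the matrices $M(t)$ coming out of Lemma~\ref{lem: L(t)}. Your introduction of the auxiliary $\sigma$-algebra $\mathcal{H}$ to justify conditioning on the full trajectory is a welcome clarification of a point the paper leaves implicit.
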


\begin{lemma}\label{lem: condition for mean opinion to converge} 
The sequence of means $\mu(1), \mu(2), \dots $ converges almost surely if and only if the series $\sum_{t=0}^\infty \Cov^X(t)$ converges (element-wise) almost surely. On the event that both converge, all the individual opinions converge to $\lim_{t \to \infty} \mu(t)$.
\end{lemma}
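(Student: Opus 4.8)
The plan is to exploit the conditional Gaussian structure established in Lemma~\ref{lem: conditional independence}. Write $\mathcal{G} = \sigma\big((\bar X(t))_{t \ge 0}\big)$ for the $\sigma$-algebra generated by the whole centred process. By that lemma, conditional on $\mathcal{G}$ the increments $\xi_r := \mu(r) - \mu(r-1)$, $r \ge 1$, are independent centred $d$-dimensional Gaussians with covariance $\frac{\rho}{N}\Cov^X(r-1)$, and each $\Cov^X(r-1)$ is $\mathcal{G}$-measurable. Since $\mu(t) - \mu(0) = \sum_{r=1}^t \xi_r$, the question of whether $\mu(t)$ converges is the question of whether a series of conditionally independent centred Gaussian vectors converges.

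First I would record the elementary fact that a series $\sum_r \xi_r$ of independent centred Gaussian vectors in $\mathbb{R}^d$ converges almost surely if and only if $\sum_r \mathrm{tr}(\mathrm{Cov}(\xi_r)) < \infty$: by Kolmogorov's three-series theorem (or directly, since the partial sums form an $L^2$-bounded martingale precisely when this sum is finite) the series converges a.s.\ iff it converges in $L^2$, and $\mathbb{E}\big|\sum_r \xi_r\big|^2 = \sum_r \mathrm{tr}(\mathrm{Cov}(\xi_r))$. Applying this conditionally on $\mathcal{G}$, and using that the event $A := \{\sum_{t \ge 0}\mathrm{tr}\,\Cov^X(t) < \infty\}$ is $\mathcal{G}$-measurable, I get $\mathbb{P}\big(\mu(t)\text{ converges}\,\big|\,\mathcal{G}\big) = \mathbf{1}_A$ almost surely. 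Since a conditional probability equal to an indicator function forces the two events to coincide up to a null set, this shows $\{\mu(t)\text{ converges}\} = A$ almost surely.

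Next I would check that $A$ coincides (in fact pointwise) with the event $\{\sum_{t\ge 0}\Cov^X(t)\text{ converges}\}$. This is pure linear algebra: each $\Cov^X(t)$ is non-negative definite symmetric, so the partial sums of its diagonal entries are non-decreasing and converge iff they are bounded, i.e.\ iff $\sum_t \mathrm{tr}\,\Cov^X(t) < \infty$; and on that event the off-diagonal series converge absolutely because $|\Cov^X_{ij}(t)| \le \tfrac12\big(\Cov^X_{ii}(t) + \Cov^X_{jj}(t)\big)$. Combining with the previous paragraph gives the claimed equivalence, since the two stated events agree almost surely.

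Finally, on the event $A$ we have $\mathrm{tr}\,\Cov^X(t) \to 0$, and since $\|\bar X(t)\|_F^2 = \mathrm{tr}\big(\bar X(t)^T\bar X(t)\big) = N\,\mathrm{tr}\,\Cov^X(t)$ by the definition of $\Cov^X(t)$, every row $X_i(t) - \mu(t)$ of $\bar X(t)$ tends to $0$. Together with the convergence of $\mu(t)$ to some limit $\mu_\infty$, this yields $X_i(t) \to \mu_\infty$ for every $i$. I do not expect a serious obstacle; the one point needing care is the passage from the conditional dichotomy to the unconditional ``if and only if'', which is exactly where the $\mathcal{G}$-measurability of $A$ and the Gaussian/Kolmogorov zero--one behaviour are used. (If one wishes to avoid conditioning, the forward implication also follows from the $L^2$-martingale convergence theorem applied to $\mu(t) - \mu(0)$, whose predictable quadratic variation is $\frac{\rho}{N}\sum_{\cdot} \Cov^X(\cdot)$; but the converse is less transparent without Gaussianity, so the conditioning argument is the cleanest route.)
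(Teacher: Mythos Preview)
Your proposal is correct and follows essentially the same approach as the paper: both condition on the centred process, apply a Kolmogorov-type criterion to the conditionally independent centred Gaussian increments of $\mu(\cdot)$, identify convergence of the diagonal (equivalently trace) series with convergence of the full matrix series via the inequality $|\Cov^X_{ij}|\le\tfrac12(\Cov^X_{ii}+\Cov^X_{jj})$, and finish with $\mathrm{tr}\,\Cov^X(t)\to 0$ forcing $X_i(t)-\mu(t)\to 0$. The only cosmetic differences are that the paper argues coordinate-by-coordinate with Kolmogorov's one-series theorem while you treat the vector at once via the trace, and you spell out more carefully the passage from the conditional dichotomy to the unconditional equivalence; neither changes the substance.
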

\begin{proof}
By Kolmogorov's one series theorem, for each $i = 1, \dots, d$, the random series $\sum_{t=1}^\infty [\mu_i(t) - \mu_i(t-1)]$  converges almost surely conditional on $\left(\bar{X}(t)\right)_{t \ge 0}$ if and only if 
\begin{equation}\label{E: conditional variance series} 
\sum_{t=1}^\infty \mathrm{Var}\left(\mu_i(t) - \mu_i(t-1) \mid \left(\bar{X}(s)\right)_{s \ge 0} \right) < \infty\,.
\end{equation} 
Condition~\eqref{E: conditional variance series} holds if and only if
\begin{equation}\label{E: ii} \sum_{t=0}^\infty 
\Cov^X_{ii}(t) < \infty.
\end{equation}
If~\eqref{E: ii} holds for each $i=1, \dots, d$, then since for each $i \neq j$ we have  $2\left|\Cov^X_{i,j}(t)\right| \le \Cov^X_{i,i}(t) + \Cov^X_{j,j}(t)$, we must have $\sum_{t=0}^\infty \Cov^X(t) < \infty$.

For the second statement, note that if $\sum \Cov^X(t)$ converges then $\Cov^X(t) \to 0$ as $t \to \infty$, and since 
$$
\frac1N\, \sum_{j=1}^N \| X_j(t) - \mu(t)\|^2 = \mathrm{Tr} (\Cov^X(t))$$
we find that 
$$
\max_{j=1,\dots,N} \|X_j(t) - \mu(t) \| \to 0\quad \text{
as $t \to \infty$.}
$$ 
Thus if the sequence $\mu(t)$  converges to some limit $\mu_\infty$, then $X_j(t)$ also converges to $\mu_\infty$ for each $j\in\{1,\dots,N\}$. 
\end{proof}

We now wish to discover for which choices of the parameters $\alpha, \beta, N$ the series $\sum_{t=0}^\infty \Cov^X(t)$ converges almost surely.
Define
$$
P(t) := M(t) M(t-1) \dots M(1)\,.
$$  
We will prove that the series $\sum_{t=0}^\infty \Cov^X(t)$ a.s.~converges if and only if the leading Lyapunov exponent for the i.i.d.~random matrix product sequence $P(\cdot)$ is strictly negative, and we will identify necessary and sufficient conditions on the parameters $(N,\alpha, \beta)$  for this to occur by computing this Lyapunov exponent explicitly. By the argument shown in the proof of Lemma~\ref{lem: Ginibre ensembles}, for each $t \ge 1$ the law of $M(t)$ is invariant under conjugation by an arbitrary element $U$ of the orthogonal group $O(N-1)$, where $U$ may be random as long as it is independent of $G(t)$. This fact will be the key to evaluating the Lyapunov exponents, and it is used in the proof of Proposition~\ref{prop: Covs are Markov}, which we now give.
\begin{proof}[Proof of Proposition~\ref{prop: Covs are Markov}]
Let $t \ge 0$. We have $\Cov^X(t) = \frac{1}{N}\bar{X}(t)^T \bar{X}(t)$.  By equation~\eqref{eq: Xbar in terms of Lbar product}, $\bar{X}(t)^T\,\bar{X}(t)$ has the same distribution as $\bar{X}(0)^T V^T P(t)^T V V^T P(t) V \bar{X}(0)$, which simplifies to 
$$
\bar{X}(t)^T\,\bar{X}(t)\;\stackrel{(d)}{=}\;\bar{X}(0)^T V^T P(t)^T P(t) V \bar{X}(0)\,.
$$ 
Note that the vector $V\bar{X}(0)$ can be any random $(N-1)$ by $d$ matrix that is independent of the sequence $\left(P(t)\right)_{t \ge 1}$.   Write $A = N\Cov^X(t)$. Then  $A$ is a non-negative definite symmetric matrix; in fact $A=K^T K$ where $K = P(t) V \bar{X}(0)$. So
$$
\bar{X}(0)^T V^T P(t+1)^T P(t+1) V \bar{X}(0) = K^T M(t+1)^T M(t+1) K\,.
$$
We claim that the conditional law of $K^T M(t+1)^T M(t+1) K$ given $K$ is a function of $A$. Indeed, suppose $K'$ is some other real $(N-1)\times d$ matrix such that $K'^T K' = A$. Then there exist partial polar decompositions of $K$ and $K'$ of the form $K = Q D O$, $K' = Q' D O$, where $O$ is an orthogonal $d\times d$ square matrix, $D$ is an $(N-1)\times d$ matrix whose $(i,j)$ entry is $0$ if $i \neq j$ and non-negative if $i=j$, and $Q$ and $Q'$ are orthogonal $(N-1)\times (N-1)$ square matrices which are random but may be taken to be independent of $M(t+1)$.  Then note that 
$$
Q^T M(t+1)^T M(t+1) Q = Q^T M(t+1)^T Q \, Q^T M(t+1) Q = R^T R\,,
$$ 
where $R:=Q^T M(t+1) Q$. But $R$ has the same law as $Q'^T M(t+1) Q'$, because the law of $M(t+1)$ is invariant under conjugation by the element $Q^{-1}Q' \in O(N-1)$, which is independent of $M(t+1)$.  So 
$$ 
K^T M(t+1)^T M(t+1) K \stackrel{(d)}{=}  K'^T M(t+1)^T M(t+1) K' \,.
$$
The formulae for the conditional expectation and conditional variance of~$\mathrm{Cov}^X(t)$ given~$\mathcal{F}^X_{t-1}$ are obtained by a long but elementary calculation which we omit.
\end{proof}

\begin{lemma}\label{L: empirical variances of single coordinates are random walks}
For each $j = 1, \dots, d$, the sequence $\left(\log \mathrm{Cov}^X_{j,j}(t)\right)_{t \ge 0}$ is a random walk with i.i.d.~increments. The increments are distributed as the log of a non-central $\chi^2$ random variable. 
\end{lemma}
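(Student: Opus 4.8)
The plan is to express $N\,\Cov^X_{j,j}(t)$ as the squared Euclidean length of a single vector evolving under the i.i.d.\ matrix product $P(\cdot)$, and then to exploit the orthogonal conjugation invariance of the law of $M(t)$, exactly as in the proof of Lemma~\ref{lem: Ginibre ensembles} and Proposition~\ref{prop: Covs are Markov}. First, I would fix $j$ and let $v(t) \in \R^{N-1}$ denote the $j^{\textup{th}}$ column of $V\bar X(t)$. Since each column of $\bar X(t)$ lies in $S_N$, and $V$ restricts to a linear isometry of $S_N$ onto $\R^{N-1}$ (because $VV^T = I_{N-1}$ while $V^TVw = w$ for every $w \in S_N$), we get
$$\|v(t)\|^2 \;=\; \|\bar X^{(j)}(t)\|^2 \;=\; \sum_{k=1}^N\bigl(X_{k,j}(t)-\mu_j(t)\bigr)^2 \;=\; N\,\Cov^X_{j,j}(t)\,.$$
From~\eqref{eq:recur} (using $V\bar X(t) = VX(t)$) one has $v(t) = M(t)v(t-1)$, and iterating, $v(t)=P(t)v(0)$. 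The standing hypothesis that $\Cov^X(0)$ is a.s.\ positive definite gives $v(0)\neq 0$ a.s.; since $\rho=\alpha(1-\beta)^2>0$ and $\beta\in[0,1)$, the matrix $M(t)=\sqrt{\rho/N}\,G^{(N-1)}(t)+\beta I_{N-1}$ is a.s.\ invertible (its determinant is a non-degenerate polynomial in the Gaussian entries), so $v(t)\neq 0$ a.s.\ for all $t$ and hence $\Cov^X_{j,j}(t)>0$ a.s., making $\log\Cov^X_{j,j}(t)$ well defined with increments $\Delta_t := \log\bigl(\|M(t)v(t-1)\|^2/\|v(t-1)\|^2\bigr)$.

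Next I would show that, conditionally on $\mathcal{F}_{t-1}$, the increment $\Delta_t$ has a fixed law, independent of $\mathcal{F}_{t-1}$. Here $v(t-1)$ is $\mathcal{F}_{t-1}$-measurable while $M(t)$ is independent of $\mathcal{F}_{t-1}$. Set $u=v(t-1)/\|v(t-1)\|$ and pick an orthogonal matrix $R\in O(N-1)$, a function of $u$ and hence $\mathcal{F}_{t-1}$-measurable, with $R^Te_1=u$, where $e_1$ is the first standard basis vector of $\R^{N-1}$. Then
$$\frac{\|M(t)v(t-1)\|^2}{\|v(t-1)\|^2}\;=\;\|M(t)R^Te_1\|^2\;=\;\|RM(t)R^Te_1\|^2\,.$$
Conditioning on $\mathcal{F}_{t-1}$, the matrix $R$ becomes a deterministic orthogonal matrix, and by the orthogonal-conjugation invariance of the real Ginibre law (proof of Lemma~\ref{lem: Ginibre ensembles}) we have $RM(t)R^T\stackrel{d}{=}M(t)$; therefore $\Delta_t$ is conditionally distributed as $\log\|M(1)e_1\|^2$, a law not depending on $\mathcal{F}_{t-1}$. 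Since $\Delta_1,\dots,\Delta_{t-1}$ are $\mathcal{F}_{t-1}$-measurable, a routine induction then gives that $(\Delta_t)_{t\ge 1}$ are i.i.d., i.e.\ $\bigl(\log\Cov^X_{j,j}(t)\bigr)_{t\ge0}$ is a random walk.

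Finally I would identify the common increment law. The first column of $G^{(N-1)}(1)$ is a vector $(g_1,\dots,g_{N-1})$ of i.i.d.\ standard Gaussians, so $M(1)e_1=\beta e_1+\sqrt{\rho/N}\,(g_1,\dots,g_{N-1})^T$ and
$$\|M(1)e_1\|^2 \;=\; \frac{\rho}{N}\left[\Bigl(g_1+\beta\sqrt{N/\rho}\,\Bigr)^2+\sum_{i=2}^{N-1}g_i^2\right],$$
where the bracket is a non-central $\chi^2$ variable with $N-1$ degrees of freedom and non-centrality parameter $N\beta^2/\rho=N\beta^2/(\alpha(1-\beta)^2)$ (it degenerates to a central $\chi^2_{N-1}$ when $\beta=0$). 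Thus $\Delta_t\stackrel{d}{=}\log(\rho/N)+\log\chi^2_{N-1}\!\bigl(N\beta^2/\rho\bigr)$, i.e.\ the log of a scaled non-central chi-square, as claimed. As a consistency check, $\E\Delta_t=\E\log\|M(1)e_1\|^2=2\lambda_1$, matching the $k=1$ case of Theorem~\ref{t2} since there $W_1=\|e_1\|=1$. The only delicate point is the conditional-distribution step of the second paragraph, but this is precisely the orthogonal-conjugation device already in use in the paper, so I do not expect any genuine obstacle.
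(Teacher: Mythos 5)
Your argument is correct, and the key ingredient --- orthogonal-conjugation invariance of the law of $M(t)$, used to show that the conditional law of $\log\bigl(\|M(t)v\|^2/\|v\|^2\bigr)$ does not depend on the unit direction $v/\|v\|$ --- is exactly what drives the paper's proof as well. Where you differ is the framing: the paper first invokes the affine invariance of Model~A to project to the one-dimensional case ($d=1$, $U_{i1}=\delta_{ij}$), whereupon $\Cov^X(t)$ is a scalar, and then quotes the structural argument already carried out in the proof of Proposition~\ref{prop: Covs are Markov} to conclude that its logarithm is a random walk with increments distributed as $\log\|M_1(1)\|^2$. You instead work directly in the ambient model, extracting the $j^{\textup{th}}$ column $v(t)$ of $V\bar X(t)$, observing $v(t)=P(t)v(0)$ and $\|v(t)\|^2=N\,\Cov^X_{j,j}(t)$, and rederiving the invariance-of-increments property from scratch via a measurable choice of rotation $R$ sending $e_1$ to $v(t-1)/\|v(t-1)\|$. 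Your route is more self-contained and makes the i.i.d.\ structure explicit without appealing to the earlier proposition; the paper's is shorter because it reuses machinery already in place. (One small discrepancy: the paper speaks of the first \emph{row} $M_1(1)$ while your computation naturally produces the first \emph{column} $M(1)e_1$; these have the same law since $G^{(N-1)}\stackrel{d}{=}\bigl(G^{(N-1)}\bigr)^T$, so both give the same non-central $\chi^2$ increment. Your consistency check against equation~\eqref{E: maximal exponent} confirms this.)
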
 
\begin{proof} We noted in the introduction that Model~A is invariant under affine changes of coordinate. This extends to affine maps to opinion spaces of different dimensions. If $A$ is any $N\times d$ real matrix and $U$ is any $d\times d'$ real matrix, then the law of $(X(t)U)_{t \ge 0}$ given $X(0)  = A$ is equal to the law of $\left(X(t)\right)_{t \ge 0}$ given $X(0) = AU$. Likewise, if $C$ is any non-negative definite symmetric $d\times d$ matrix then the law of  $\left(U^T\Cov^X(t)U\right)_{t \ge 0}$ given $\Cov^X(0) = C$ is equal to the law of $\left(\Cov^X(t)\right)_{t \ge 0}$ given $\Cov^X(0) = U^T C U$. In the special case $d=1$, the covariance matrix $\Cov^X(t)$ is simply the empirical variance $\mathrm{Var}^X(t)$, and the proof of Proposition~\ref{prop: Covs are Markov} shows that $\log(\mathrm{Var}^X(t))$ is a random walk with i.i.d~increments which have the distribution of $\log \|M_1(1)\|^2$, where $M_1(1)$ is the first row of $M(1)$. The quantity $\|M_1(1)\|^2$ has a non-central $\chi^2$ distribution. For each $j=1,2,\dots,d$, applying affine invariance with $U$ being the $d\times 1$ matrix given by $U_{i1} = \delta_{ij}$, we find that the matrix entry $\Cov^X_{j,j}(t)$ on the diagonal also follows a random walk with the same description.
\end{proof}

\subsection{Analysis of the i.i.d.~matrix products}
In this section, we analyze the sequence $(P(t))_{t \ge 1}$ by applying some of the standard theory of Lyapunov exponents of i.i.d.~random matrix products. The reader may find all of the necessary theory in the monograph of Bougerol, which is part A of the two-part book~\cite{BougerolLacroix}. For completeness, we include a review of the basic definitions and results, which readers familiar with the topic may skip.

\subsubsection{Brief review of Lyapunov exponents of i.i.d.~matrix products}\label{Lyapexp}
The sequence $\left(P(t)\right)_{t \ge 1}$ is an example of a sequence of partial products of an i.i.d.~sequence of random matrices. A comprehensive theory of the behaviour of such sequences was developed from the 1960s to 1980s by Furstenberg, Kesten, Kifer, Guivarc'h, Raugi and Le Page, and some specific results relevant to the matrix products considered in the present paper were published by Newman and Cohen~\cite{CohenNewman}.  

Let $\mu$ be a Borel probability measure on the group $GL_k(\mathbb{R})$ of invertible $k$ by $k$ matrices and let $Y_1, Y_2, \dots$ be an i.i.d.~sequence of matrices distributed according to $\mu$. 
For $Y \in \mathrm{GL}_k(\mathbb{R})$ let $\|Y\|$ denote the operator norm of $Y$. Assume that $\mathbb{E}(\log^{+}\|Y_1\|) < \infty$. The leading (or upper, or maximal) Lyapunov exponent $\lambda_1$ associated to $\mu$ is defined by 
$$
\lambda_1 = \lim_{n \to \infty} \frac{1}{n} \mathbb{E}( \log \|Y_n \dots Y_1\|)\,.
$$
This limit exists in $\mathbb{R} \cup \{-\infty\}$ since the sequence $\mathbb{E}(\log \|Y_n \dots Y_1\|)$ is subadditive.  Furstenberg and Kesten showed that in fact
$$
\gamma_1 = \lim_{n \to \infty} \frac{1}{n}\log\|Y_n \dots Y_1\|\quad\text{a.s.}
$$
This may be shown quite easily as a consequence of Kingman's subadditive ergodic theorem since the random sequence $\log \|Y_n \dots Y_1\|$ is also subadditive.

Denote by $T_\mu$ the smallest closed sub-semigroup of $GL_k(\mathbb{R})$ containing the support of $\mu$. $T_\mu$ is called \emph{contracting} if there exists a sequence $\left(A_n\right)_{n \ge 1}$ of matrices in $T_\mu$ such that $\|A_n\|^{-1} A_n$ converges to a rank one matrix. $T_\mu$ is called \emph{strongly irreducible} if there is no non-empty finite collection of proper linear subspaces of~$\mathbb{R}^{N-1}$ that $Y_1$ a.s. preserves set-wise. In particular if $T_\mu = GL_k(\mathbb{R})$ then $T_\mu$ is both contracting and strongly irreducible. Assume now that $T_\mu$ is contracting and strongly irreducible, and in addition that
$$
\mathbb{E}(\log(\max(\|Y_1\|, \| Y_1^{-1}\|))) < \infty\,.
$$ 

The Lyapunov exponents $\lambda_1, \dots, \lambda_{k}$ are defined for each $i = 1, \dots, k$ by
$$\lambda_1 + \dots + \lambda_i = \max \lim_{t \to \infty} \frac{1}{t}\log\|P(t)v_1 \wedge \dots \wedge P(t)v_i\|\,,$$ where the maximum is over all choices of $i$ vectors $v_1, \dots, v_i \in \mathbb{R}^{i}$, and $\|w_1 \wedge \dots \wedge w_i\|$ denotes the $i$-dimensional volume of the parallelepiped spanned by vectors $w_1, \dots, w_i$. The limits all exist 
 and are finite, and the maximum is also finite.  Moreover, $\lambda_1 \ge \lambda_2 \ge \dots \ge \lambda_k$.
  
In particular, the maximal Lyapunov exponent is
$$
\lambda_1 =  \max_v \lim_{t \to \infty} \frac{1}{t}\log \|P(t)v\|\,.
$$ 
The maximum here is in fact achieved by all $v$ outside a certain (random) codimension-one subspace of $\mathbb{R}^{k}$, and for each $v \in \mathbb{R}^{k} \setminus \{0\}$ we have
$$\lim_{t \to \infty} \frac{1}{t} \log \|P(t)v\|= \lambda_1 \;\text{ a.s.}  $$

The sum of the first $k$ Lyapunov exponents is the maximal Lyapunov exponent of the induced action of the matrices $P(t)$ on the exterior power $\bigwedge^k \mathbb{R}^{k}$. In particular the sum of all $k$ Lyapunov exponents is 
$$ 
\lambda_1 + \dots + \lambda_{k} = \lim_{t \to \infty} \frac{1}{t}\log \det P(t), \quad \text{a.s.}
$$ 

Letting the eigenvalues of $P(t)^TP(t)$ be $e_1(t) \ge e_2(t) \ge \dots \ge e_{k}(t)$ (repeated according to multiplicity), we have 
$$
\lambda_k = \lim_{t \to \infty} \frac{1}{2t} \log  e_k(t)\,.
$$

The Lyapunov exponents are all the possible values of $ \lim_{t \to \infty} \frac{1}{t}\log \|P(t)v\|$ as $v$ ranges over $\mathbb{R}^{k} \setminus \{0\}$.

{Any matrix $Y$ in $\mathrm{GL}_k(\mathbb{R})$ acts on the projective space $P(\mathbb{R}^k) = \{ \langle v \rangle\,:\, v \in \mathbb{R}^k\setminus\{0\}\}$ by sending the line $\langle v \rangle = \{sv: s \in \mathbb{R}\}$ to the line $\langle Yv\rangle$.} Guivarc'h and Raugi \cite{GR} proved that when $T_\mu$ is contracting and strongly irreducible, we have $\lambda_1 > \lambda_2$. Moreover, there is exponential contraction on projective space at rate $\lambda_1-\lambda_2$. We measure distance on projective space using the metric
$$ 
\delta(\overline{v},\overline{w})  = \sin(\theta(v,w))
$$ 
where $\theta(v,w)$ is the angle between any two non-zero vectors $v$ and $w$ in $\mathbb{R}^{k}$ and $\overline{v}$ and $\overline{w}$ are their classes in~$P\left(\mathbb{R}^{k}\right)$.  Then for any two deterministic non-zero vectors $v, w \in \mathbb{R}^{k}\setminus \{0\}$, almost surely
\begin{equation}\label{E: exponential contraction} \limsup_{t \to \infty} \frac{1}{t} \log \delta\left(\overline{P(t)v},\overline{P(t)w}\right) \,\le\, -(\lambda_1 - \lambda_2)\,<\, 0\,,
\end{equation} 
where we use the convention that $\log 0 = -\infty$. There may also exist random vectors $v$ and $w$, depending on the whole sequence $\left(Y_t\right)_{t \ge 1}$, for which~\eqref{E: exponential contraction} fails.

There is a unique invariant probability measure $\pi$ on projective space $P(\mathbb{R}^{k})$. Invariance means that if $v$ is a random non-zero vector in $\mathbb{R}^{k}$ such that the class of $v$ in $P\left(\mathbb{R}^{k}\right)$ is distributed according to $\pi$, and $M$ is a matrix distributed like $M(1)$, independent of $v$, then the class of $Mv$ in $P\left(\mathbb{R}^{k}\right)$ is also distributed according to $\pi$. The support of $\pi$ is all of $P\left(\mathbb{R}^{k}\right)$. Finally, $\lambda_1$ may be expressed as an integral with respect to $\pi$: 
$$
\lambda_1 = \int \mathbb{E}\left(\log \frac{\|Mv\|}{\|v\|} \right) d\pi(v) \,.
$$

\subsubsection{\texorpdfstring{The maximal Lyapunov exponent of $P(t) = M(t) \dots M(1)$ for $t \ge 0$}{Lg}}\label{SS: max exponent}
Following Newman \cite{Newman}, note that for our random matrix $M$, for any orthogonal matrix $Q \in O(N-1)$, the law of $Q^{-1} M Q = Q^T M Q$ coincides with the law of $M$. (One just has to check that the entries of $Q^T M Q$ are pairwise uncorrelated Gaussians with the correct means and variances, a simple calculation.) It follows that the invariant measure $\pi$ is also invariant with respect to the group $O(N-1)$. There is a unique such measure on $P\left(\mathbb{R}^{N-1}\right)$. It also follows that the distribution of $\frac{\|Mv\|}{\|v\|}$ does not depend on $v$, so 
$$
\lambda_1 = \mathbb{E}\left(\log \|M_1\|\right)\,,    
$$
where $M_1$ is the first row of $M$. Recalling the notation $\rho = \alpha(1-\beta)^2$, we have
\begin{equation}\label{E: maximal exponent}\lambda_1 = \lambda_1(\alpha,\beta,N) := \frac{1}{2}\mathbb{E}\left[\log\left(\left(\beta + \frac{\sqrt{\rho}}{\sqrt{N}}X_1\right)^2 + \sum_{i=2}^{N-1} \frac{\rho}{N} X_i^2 \right) \right]\,,
\end{equation} 
where $X_1, \dots, X_{N-1}$ are independent standard Gaussians. Notice that when $N=2$ the second term inside the logarithm is an empty sum. Let $\chi^2_{\nu}(\kappa)$ denote the non-central $\chi^2$ distribution with $\nu$ degrees of freedom and non-centrality parameter $\kappa$. Then equation~\eqref{E: maximal exponent} can be interpreted  as
\begin{align}\label{eq:lambda1}
\lambda_1 = \frac{1}{2}\left(\log \frac{\rho}{N} + \mathbb{E}(\log Y)\right)\, 
\end{align}
where $Y\sim \chi^2_{N-1}\!\left(\frac{N\beta^2}{\rho}\right)$. Next, for $Y_{\nu,\kappa} \sim \chi^2_\nu(\kappa)$, we have
\begin{equation}\label{E: expectation of log non-central chi-squared}
\mathbb{E}(\log Y_{\nu,\kappa}) = \log 2 +   e^{-\kappa/2} \sum_{j=0}^\infty \frac{1}{j!}
\left(\frac{\kappa}{2}\right)^j
\psi\left(j+\frac{\nu}{2}\right)\,, \end{equation}
where $\psi(x)=\frac{\Gamma'(x)}{\Gamma(x)}$ is the digamma function. This follows from the statement that each non-central chi-squared random variable is a Poisson-weighted mixture of standard chi-squared random variables. Precisely, if $J \sim \mathrm{Poi}(\kappa/2)$ and, conditional on $J=j$, $Y$ is a $\chi^2$ r.v. with $\nu + 2j$ degrees of freedom, then $Y \sim \chi^2_{\nu}(\kappa)$. See also e.g.\ the proof of Lemma~10.1 in~\cite{Capacity}.

Putting this together, we have 
\begin{equation} \label{E: max exponent as series} 
\begin{split}
\lambda_1 &= \frac{1}{2}\left[ \log \frac{2\rho}{N} +  \exp\left(-\frac{N\beta^2}{2\rho}\right)\,\sum_{j=0}^{\infty}\frac{\left(\frac{N\beta^2}{2\rho} \right)^j}{j!} \psi\left(j + \frac{N-1}{2} \right)\right]
\\ & =\frac{1}{2}\left[ \log \frac{2\rho}{N} +  \phi_{(N-1)/2}\left(\frac{N\beta^2}{2\rho}\right)\right]
=\log \beta+
\frac{1}{2}\left[  \phi_{(N-1)/2}\left(z\right)-\log z\right]
\,,
\end{split}
\end{equation}
(the last equality holds only when $\beta>0$)
where $z=N\beta^2/(2\rho)$ and
\begin{align*}
\phi_m(x)=e^{-x}\sum_{j=0}^\infty\frac{x^j}{j!}\psi(j+m),\qquad \phi_m(0)=\psi(m)\,.
\end{align*}
This expression is the one stated in Theorem \ref{t1}; the proof of the second part of equation~\eqref{def of phi} is deferred to Lemma~\ref{lem:phi}.

By conditioning on $W_k$ and using the tower law for expectation we may rewrite the expression for~$\lambda_k$ as
$$
\lambda_k(\alpha, \beta,N) = \frac{1}{2}\left(\log \frac{\rho}{N} + \mathbb{E}(\mathbb{E}(\log Y_k | W_k))\right)\, 
$$
where conditional on $W_k$ we have $Y_k\sim \chi^2_{N-k}\!\left(\frac{N\beta^2 W_k^2}{\rho}\right)$.  Proceeding as we did in the case $k=1$, one could now use equation~\eqref{E: expectation of log non-central chi-squared} to obtain an analogue of equation~\eqref{E: max exponent as series}, featuring the moments of $W_k^2$ in the coefficients of the infinite series. 
Note that  $W_1 \equiv 1$, so this agrees with the previous expression~\eqref{eq:lambda1} for $\lambda_k$ when $k=1$.    

\subsubsection{Singular value decompositions}
For any $k\times k$ real matrix $P$, there exists a \emph{singular value decomposition} $P = ADB$, where $A$ and $B$ are orthogonal $k\times k$ matrices and $D$ is a diagonal matrix  $D = \mathrm{Diag}(d_1, \dots, d_k)$ with $d_1 \ge d_2 \ge \dots \ge d_k \ge 0$. The diagonal factor $D$ is uniquely determined subject to this condition, and the non-negative numbers $d_i$ are called the \emph{singular values} of $P$. (Since $P^TP = B^T D A^T A D B = B^{-1}D^2B$, the $d_i$ are also the positive square roots of the eigenvalues of $P^T P$.)   However, the pair $(A, B)$ is not uniquely determined by $P$ because there are non-trivial orthogonal matrices which commute with $D$. In the case where the singular values of $P$ are distinct, the ambiguity in $B$ consists of a choice of sign for each row of $B$.

We will consider a sequence of singular value decompositions $P(t) = A(t)D(t)B(t)$, where $P(t) = M(t) \dots M(1)$ as above. Recall that we are interested in the asymptotic behaviour of $\Cov^X(t)$. In terms of the singular value decomposition, we have
\begin{equation} \label{E: Cov in terms of SVD}
\begin{split}
\Cov^X(t) &= \frac{1}{N} \bar X(t)^T \bar X(t) = \frac{1}{N} \bar X(0)^T V^T P(t)^T P(t) V \bar X(0)  \\ &= \frac{1}{N} \bar X(0)^T V^T B(t)^T\,D(t)^2B(t) V \bar X(0) \,.
\end{split}
\end{equation}
Note that the final expression does not involve $A(t)$. 

\begin{lemma}\label{L: GuivarchRaugi}
The singular value decompositions $P(t) = A(t) D(t) B(t)$ for $t = 1,2,3, \dots$ may be chosen so that $B(t)$ converges as $t \to \infty$ to a random orthogonal matrix $B \in O(N-1)$. The limit matrix $B$ depends on the choice only up to multiplication of each row separately by $\pm 1$. Up to this row-by-row sign ambiguity, $B$ is distributed according to the Haar measure on $O(N-1)$, and in particular, the first row $B_1$ is distributed (up to sign) according to the uniform measure on the sphere $\mathbb{S}^{N-2}$. Regardless of the choice, $D(t)^{1/t}$ converges to a deterministic diagonal matrix $D$ with distinct positive diagonal entries satisfying $D_{1,1} > \dots > D_{N-1,N-1}$.  Moreover, for $i=1, \dots, N-1$ we have $D_{i,i} = \exp(\lambda_i)$, where $\lambda_1 > \lambda_2 > \dots > \lambda_{N-1}$ are the Lyapunov exponents of the sequence $(P(t))_{t \ge 0}$. 
\end{lemma}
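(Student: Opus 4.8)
The plan is to split the statement into an analytic part — the almost sure convergence of the singular value decomposition, which comes from the general theory of i.i.d.\ matrix products — and a soft symmetry part — the identification of the limiting law of the orthogonal factor $B(t)$, which comes from the $O(N-1)$-invariance of the law of $M(1)$ already exploited in \S\ref{Sec 2.1}.

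For the analytic part, recall that each $M(t)$ is invertible almost surely and that the integrability hypothesis $\mathbb{E}\log\max(\|M(1)\|,\|M(1)^{-1}\|) < \infty$ holds (this was verified in order to apply the theory reviewed above). By the multiplicative ergodic theorem (see \cite{BougerolLacroix}), the matrices $(P(t)^T P(t))^{1/2t}$ converge almost surely to a deterministic positive-definite matrix $\Lambda$, whose eigenvalues are $e^{\lambda_1}, \dots, e^{\lambda_{N-1}}$ because $\frac{1}{t}\log d_i(t) = \frac{1}{2t}\log e_i(t) \to \lambda_i$; by Theorem~\ref{t2}(b) these are distinct, so $\Lambda$ has simple spectrum. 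Now $B(t)$ diagonalises $P(t)^T P(t)$ and hence also $(P(t)^T P(t))^{1/2t}$, while $D(t)^{1/t}$ is precisely the diagonal matrix of eigenvalues of $(P(t)^T P(t))^{1/2t}$ in decreasing order; so $D(t)^{1/t} \to \mathrm{Diag}(e^{\lambda_1}, \dots, e^{\lambda_{N-1}}) =: D$, with the $D_{i,i}$ distinct, and $D_{i,i} = \exp(\lambda_i)$ is just the relation $\lambda_i = \lim_t \frac{1}{2t}\log e_i(t)$. Since $\Lambda$ has simple spectrum, its spectral projections are continuous at $\Lambda$, so the rank-one eigenprojections of $(P(t)^T P(t))^{1/2t}$ converge to those of $\Lambda$; convergence of a rank-one orthogonal projection means convergence of the corresponding line in $P(\mathbb{R}^{N-1})$, so after choosing the sign of each row of $B(t)$ suitably we obtain $B(t) \to B$, where $B$ diagonalises $\Lambda$ and is determined up to independent sign flips of its rows. (Alternatively, one can stay within the results quoted above and bootstrap from the projective contraction estimate~\eqref{E: exponential contraction} applied to the exterior powers $\wedge^k P(t)$, whose top two Lyapunov exponents differ by $\lambda_k - \lambda_{k+1} > 0$: this yields convergence of the nested flag of top-$k$ right singular subspaces, hence of each $B_k(t)$, hence of $B(t)$.)

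For the law of $B$, note first that $P(t)$ has a density on $GL_{N-1}(\mathbb{R})$, since each $M(t)$ does and is a.s.\ invertible; hence $P(t)$ has distinct singular values almost surely, and therefore $B(t) = \mathcal{B}(P(t)^T P(t))$, where $\mathcal{B}$ is the Borel map sending a symmetric matrix with distinct eigenvalues to its orthonormal eigenframe (eigenvectors as rows, in decreasing eigenvalue order), regarded as an element of $O(N-1)$ defined only up to flipping the sign of each row. This map is equivariant: $\mathcal{B}(Q^T S Q) = \mathcal{B}(S)\,Q$ for $Q \in O(N-1)$, since $S = B^T\Delta B$ gives $Q^TSQ = (BQ)^T\Delta(BQ)$. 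On the other hand, because the $M(i)$ are i.i.d.\ and $Q^T M(i) Q \stackrel{d}{=} M(i)$, we have $Q^T P(t) Q \stackrel{d}{=} P(t)$ for every deterministic $Q \in O(N-1)$, hence $Q^T (P(t)^T P(t)) Q \stackrel{d}{=} P(t)^T P(t)$. Combining, $B(t)\,Q = \mathcal{B}(Q^T (P(t)^T P(t)) Q) \stackrel{d}{=} \mathcal{B}(P(t)^T P(t)) = B(t)$, so the law of $B(t)$ on $O(N-1)$ considered up to row signs is invariant under right translation by $O(N-1)$; the only such probability measure is the image of Haar measure. Thus $B(t)$ is Haar-distributed up to row signs for every $t$, and since $B(t) \to B$ almost surely the limit $B$ has the same law; in particular its first row $B_1$ is, up to sign, uniform on $S^{N-2}$.

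The main obstacle is the first step: one must be sure that the input from the general theory really delivers convergence of the orthogonal factor $B(t)$, and not merely of the singular values together with the leading Oseledets direction. This is exactly where simplicity of the Lyapunov spectrum (Theorem~\ref{t2}(b)) is essential — without distinct exponents the eigenprojections of $(P(t)^T P(t))^{1/2t}$ need not converge and $B(t)$ could fail to converge. By contrast, the identification of the law, though it needs a little care about the two-fold sign ambiguity of eigenvectors, is a purely distributional symmetry argument that holds uniformly in $t$ and does not even require passing to the limit.
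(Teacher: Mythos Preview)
Your approach is sound but differs from the paper's in its main input. The paper invokes the Guivarc'h--Raugi theorem directly: under the hypotheses that $T_\mu$ is $p$-contracting and $p$-strongly irreducible for every $p$ (verified there from $T_\mu = \mathrm{GL}_{N-1}(\mathbb{R})$), that theorem delivers \emph{simultaneously} the simplicity of the Lyapunov spectrum and the convergence of the orthogonal factor in the singular value decomposition, so the paper's proof consists mainly of checking these hypotheses together with the integrability condition $\mathbb{E}(|\log|\det M(1)||)<\infty$. You instead decouple the two ingredients, taking convergence of $(P(t)^TP(t))^{1/2t}$ from the multiplicative ergodic theorem and simplicity from Theorem~\ref{t2}(b), then deducing convergence of $B(t)$ by continuity of spectral projections at a matrix with simple spectrum. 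Two caveats: first, the limit $\Lambda$ is \emph{random}, not deterministic --- only its eigenvalues $e^{\lambda_i}$ are deterministic --- a slip which does not affect your $\omega$-by-$\omega$ continuity argument; second, your appeal to Theorem~\ref{t2}(b) is a forward reference in the paper's logical order, since the proof of Theorem~\ref{t2}(a) explicitly cites facts checked \emph{in the proof of} the present lemma. There is no genuine circularity, because what Theorem~\ref{t2} borrows is only the verification that $T_\mu$ is $p$-contracting and $p$-strongly irreducible, not any conclusion of the lemma, but as written the dependency structure would have to be rearranged. Your identification of the law of $B$ is a pleasant sharpening of the paper's brief argument: you observe that $B(t)$ is already Haar modulo row signs for each finite $t$, whereas the paper argues only that the limit inherits right-invariance from the conjugation-invariance of the whole sequence $(P(t))_{t\ge 1}$.
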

\begin{proof}
We will use the results of Guivarc'h and Raugi \cite{GR} which are also stated in~\cite{BougerolLacroix}, Chapter~4, Theorem~1.2 and Corollary~1.3. The theorem concerns a sequence of random matrix products $S_n = Y_n \dots Y_1$ where $Y_1, Y_2, \dots $ are i.i.d.~real $k$ by $k$ matrices with common distribution $\mu$, and $Y_1$ is almost surely invertible. Denote by $T_\mu$ the smallest closed sub-semigroup of $\mathrm{GL}_{k}(\mathbb{R})$ that contains the support of the distribution of $Y_1$. For any $p \in \{1, \dots, k-1\}$, $T_\mu$ is defined to be \emph{$p$-strongly irreducible} if there does not exist a finite union~$L$ of proper linear subspaces of the exterior power $\bigwedge^p \mathbb{R}^k$ such that, for all $M \in T_\mu$, $\left(\bigwedge^p M \right)(L)$ is contained in $L$. Here $\bigwedge^p M$ denotes the unique linear endomorphism of $\bigwedge^p \mathbb{R}^k$ which maps any decomposable vector of the form $v_1 \wedge \dots \wedge v_p$ to $(Mv_1) \wedge \dots \wedge (M v_p)$. Also for any $p \in \{1, \dots, k-1\}$, $T_\mu$ is defined to be \emph{$p$-contracting} if there exists a sequence $\{M_n: n \ge 1\}$ in $T_\mu$ such that $ \| \bigwedge^p M_n\|^{-1} \bigwedge^p M_n$ converges to a rank one matrix. Guivarc'h and Raugi proved that if $T_\mu$ is $p$-strongly irreducible and $p$-contracting for every $p \in \{1, \dots, k-1\}$ and in addition $\mathbb{E}(\log^{+} \| Y_1\|) < \infty$, (where $\|\cdot\|$ denotes the operator norm with respect to the Euclidean norm), then almost surely the following hold:
\begin{itemize}
\item There exist polar decompositions $S_n = K_n D_n U_n$, with $K_n, U_n \in O(k)$ and $D_n$ diagonal with non-negative entries, such that $K_n \to K$ as $n \to \infty$, where the random limit matrix $K$ is also orthogonal.
\item Taking the positive definite $(2n)^{th}$ root, $\left(S_n^T S_n\right)^{1/2n} \to K^T D K$ where $D$ is a deterministic diagonal matrix such that for each $i \in \{1, \dots, k\}$, $D_{i,i} \ge 0$ and, for $i < d$, if $D_{i,i} > 0$ then $D_{i,i} > D_{i+1,i+1}$. 
We also have $\left(\left(D_n\right)_{i,i}\right)^{1/n} \to D_{i,i}$. 
\end{itemize}
The numbers $\gamma_1, \dots, \gamma_k \in \mathbb{R} \cup \{-\infty\}$ defined by $\gamma_i = \log(D_{i,i})$ are called the \emph{Lyapunov exponents} of the random sequence $\left(S_n\right)$. They satisfy the relation
$$ 
\sum_{i=1}^k \gamma_i = \mathbb{E} (\log | \det Y(1) |)\,,
$$ 
so all $\gamma_i$ are finite (equivalently, all $D_{i,i}$ are positive) if and only if $\mathbb{E}(|\log(|\det Y(1)|)|) < \infty$.

We apply the above result to our random matrix products $P(t) = M(t) \dots M(1)$. We let $P(t) = A(t)D(t)B(t)$ be a singular value decomposition of $P(t)$, chosen so that in each row of $B(t)$ the leftmost nonzero entry is positive. We must check that the conditions of the theorem are satisfied in this case. We have $k=N-1$ and {(thinking of $M(1)$ as a $\mathrm{GL}_k(\mathbb{R})$-valued random variable)} the support of the distribution of $M(1)$ is the whole group $\mathrm{GL}_{k}(\mathbb{R})$, so $T_\mu = \mathrm{GL}_{k}(\mathbb{R})$. 
It is straightforward to check that $T_\mu$ is $p$-strongly irreducible and $p$-contracting for each $p \in \{1, \dots, k-1\}$. The details are spelled out in~\cite{BougerolLacroix}, Chapter~4, Proposition~2.3, {p.~83,} which asserts that if $T_\mu$ contains an open subset of $\mathrm{SL}_k(\mathbb{R})$ then $T_\mu$ is $p$-strongly irreducible and $p$-contracting for each $p \in \{1, \dots, k-1\}$.  It remains to check that $\mathbb{E}(\log^+ \| M(1) \|) < \infty$, which is also straightforward; indeed $\mathbb{E}(\|M(1)\|^2) < \mathbb{E}( \sum_{i,j} M(1)_{i,j}^2) < \infty$.   To see that the Lyapunov exponents are strictly positive, and hence distinct, we must check that $\mathbb{E}(| \log( |\det M(1)| )|) < \infty$. The quantity $\det M(1)$ is a non-constant polynomial function of the $(N-1)^2$ independent non-degenerate Gaussian matrix entries of~$M(1)$, and therefore $\det M(1)$ has a distribution with a continuous density on the real line. (In fact, from~\cite[Section~ 2]{KV} one may read off an explicit recursive description of the distribution of~$\det M(1)$ as a mixture of almost surely non-degenerate Gaussians, with the recursion indexed by the parameter $N$.)  In particular the density of~$\det M(1)$ is bounded on a neighbourhood of $0$, which implies that $\mathbb{E}(|\log^{-}(|\det M(1)|)|) < \infty$, and we also have $\mathbb{E}(\log^{+}(|\det M(1)|)) \le \mathbb{E}((N-1)\log^{+}(\|M(1)\|)) < \infty$ because~$\|M(1)\|$ is the largest singular value of~$M(1)$, while~$|\det(M(1))|$ is the product of the singular values.  

The entire sequence $\left(P(t)\right)_{t \ge 1}$ is invariant in law under conjugation by any member of the orthogonal group~$O(N-1)$. This implies that the limit matrix $B$ must be distributed (up to its row-by-row sign ambiguity) according to Haar measure on~$O(N-1)$, since this is the unique right-invariant Borel probability measure on~$O(N-1)$. Consequently, up to sign, $B_1$ is distributed according to the uniform measure on the unit sphere $\mathbb{S}^{N-2} \subset \mathbb{R}^{N-1}$. Finally, the statements about the dependence of $B$ and $D$ upon the choices of polar decompositions $P(t) = A(t) D(t) B(t)$ follow from the fact that almost surely the singular values of $P(t)$ are distinct for each $t \ge 1$.
\end{proof}

We will denote the Lyapunov exponents of the sequence $\left(P(t)\right)_{t \ge 1}$ by $\lambda_1 > \dots > \lambda_{N-1}$.  That is, letting $D  = \lim_{t \to \infty} D(t)^{1/t}$ be the limit given by Lemma~\ref{L: GuivarchRaugi}, we have $D_{i,i} = \exp(\lambda_i)$, for $i = 1, \dots, N-1$.  

\subsection{Proof of Theorem~\ref{t1}}
\begin{proof}[Proof of Theorem~\ref{t1}]
We already computed the explicit value of the maximal Lyapunov exponent $\lambda_1$ in Section~\ref{SS: max exponent}. By Lemma~\ref{lem: condition for mean opinion to converge}, to prove part (i) of Theorem~\ref{t1} it suffices to show that $\lambda_1 < 0$ implies that $\sum_{t=0}^\infty \Cov^X(t)$ converges almost surely. This follows from Lemma~\ref{L: GuivarchRaugi} and equation~\eqref{E: Cov in terms of SVD}. Indeed, if $\lambda_1 < 0$ then $D_{1,1} = \exp(\lambda_1) < 1$ so $D(t)$ converges exponentially fast to $0$. Since $B(t)$ also converges, \eqref{E: Cov in terms of SVD} implies that $\Cov^X(t)$ almost surely converges exponentially fast to the zero matrix, which implies the convergence of the series $\sum_{t=0}^\infty \Cov^X(t)$.

Part (ii) concerns the critical case where  $\lambda_1 = 0$. We saw in Lemma~\ref{L: empirical variances of single coordinates are random walks} that $\Cov^X_{1,1}(t)$ is a random walk with i.i.d.~increments distributed as the log of a non-central $\chi^2$ random variable, which has positive finite variance and mean $0$ in the critical case. This random walk is almost surely unbounded above. This follows, for example, from Theorem 5.1.7 in~\cite{DL}. In particular $\sum_{t=0}^\infty \Cov^X(t)$ almost surely diverges. It then follows from Lemma~\ref{lem: condition for mean opinion to converge} that $\mu(t)$ almost surely diverges as $t \to \infty$. 

Suppose $\lambda_1 > 0$. Let $\mathcal{G}$ be the $\sigma$-algebra generated by $(\overline{X}(t))_{t \ge 0}$. 
Let $B = \lim_{t \to \infty} B(t)$ be the $\mathcal{G}$-measurable orthogonal matrix described in Lemma~\ref{L: GuivarchRaugi}, where (to make it well-defined) the choice of signs is made so that the first non-zero entry in each row of $B$ is positive. We will study the sequence of dot products $u \cdot X_i(t)$.
We will use the conditional L\'evy-Borel-Cantelli lemma to show that almost surely, conditional on~$\mathcal{G}$, we have $|u \cdot X_i(t)| \to \infty$ almost surely for each $i=1, \dots, N$. Conditional on $(\mathcal{G}, X(t-1))$, the distribution of $\mu(t)$ is Gaussian with mean $\mu(t-1)$ and covariance $\frac{\rho}{N} \Cov^X(t-1)$, and the distribution of each $X_i(t)$ is Gaussian with mean $\overline{X}_i(t)$ and covariance $\frac{\rho}{N} \Cov^X(t-1)$. (Indeed, conditional on $\mathcal{G}$ each of $X_1(t), \dots, X_N(t)$ and $\mu(t)$ determines the others.) It follows that conditional on $(\mathcal{G}, X(t-1))$, the distribution of $u \cdot X_i(t)$ is Gaussian. We now compute its variance using equation~\eqref{E: Cov in terms of SVD}:
\begin{eqnarray*} \mathrm{Var}(u \cdot X_i(t) \,|\, \mathcal{G}, X(t-1)) &=& \rho\, u\, \Cov^X\!(t-1)\, u^T \\
 &  = & \frac{\rho}{N} u\, \bar X(0)^T V^T B(t-1)^T\,D(t-1)^2B(t-1) V \bar X(0)\,  u^T\,.
\end{eqnarray*}
By Lemma~\ref{L: GuivarchRaugi} {we have $D_{i,i}(t)^{1/t} \to \exp(\lambda_i)$ as $t \to \infty$, so} the matrix $D(t)/D_{1,1}(t)$ converges almost surely to the {$N-1$ by $N-1$} matrix whose entries are all $0$ except for a $1$ in the top-left corner. Also $B(t)$ converges a.s.~to the $\mathcal{G}$-measurable random limit $B$ in $O(N)$; {recall that $B_1$ is the first row of $B$}. Hence a.s. we have 
$$\frac{1}{D_{1,1}(t-1)^2} \mathrm{Var}(u \cdot X_i(t) \,|\, \mathcal{G}, X(t-1)) \; \to \; \frac{\rho}{N } \left(u \cdot (B_1 V \bar X(0)) \right)^2\,.$$  Let $W$ denote the $\mathcal{G}$-measurable random variable $|u \cdot (B_1 V \bar X(0))|$, which is the absolute value of the sole entry of the $1$ by $1$ matrix $u \bar X(0)^T V^T B_1^T$. The assumption that $\mathrm{Cov}^X(0)$ is a.s.~positive definite implies that the column vector $u \bar X(0)^T V^T$ is a.s.~nonzero.  Recall that the law of $B_1$ is absolutely continuous with respect to the uniform measure on the sphere $\mathbb{S}^{N-2}$ (up to its sign ambiguity). Since $B$ is independent of $X(0)$, we deduce that $W \neq 0$ almost surely. 
For any $R> 0$ we may bound the Gaussian density by its maximum and integrate over $(-R,R)$ to obtain
$$\mathbb{P}(|u \cdot X_i(t)| < R \,|\, \mathcal{G}, X(t-1)) \le \frac{ 2 R}{\left(2\pi \mathrm{Var}(u \cdot X_i(t) \,|\, \mathcal{G}, X(t-1))\right)^{1/2}}\, \sim\, \frac{2 \sqrt{N}\, R }{\sqrt{2\pi\rho} \,W D_{1,1}(t-1)}$$ For any $\epsilon > 0$ we have
$$\frac{2\sqrt{N} R }{\sqrt{2\pi\rho} \,W D_{1,1}(t-1)}\,=\, O\left(\frac{2\sqrt{N}\, R\, e^{-(\lambda_1-\epsilon) (t-1)}}{\sqrt{2\pi\rho}\,W }\right)\,, $$
almost surely.  The RHS above is summable in $t$ when $\epsilon < \lambda_1$, hence 
$$\mathbb{P}( | u \cdot X_i(t) | < R\; \text{ for infinitely many $t$ }) = 0\,. $$
Since this holds for each $R>0$, we have $|u \cdot X_i(t)| \to \infty$ almost surely.

Essentially the same argument applies to $u\cdot \mu^X(t)$, using 
$$\mathrm{Var}(u \cdot \mu^X(t) \,|\, \mathcal{G}, X(t-1)) = \frac{\rho}{N} u\, \mathrm{Cov}^X\!(t-1)\, u^T\,. $$
\end{proof}

\subsection{Proof of Theorem~\ref{t2}}

\begin{proof}[Proof of part (a) of Theorem~\ref{t2}]
We will apply a theorem of Guivarc'h and Raugi \cite{GR}, which is given in~\cite[Ch.~4, Thm.~1.2]{BougerolLacroix}. Let $k \in \{1, \dots, N-1\}$. Recall that in the proof of Lemma~\ref{L: GuivarchRaugi} we checked that the random matrix product sequence $\left(P(t)\right)_{t \ge 0}$ is $k$-strongly irreducible and $k$-contracting.  Guivarc'h and Raugi showed that these conditions imply that there is a unique invariant measure for the action of the random matrices $M(1), M(2), \dots$ on the projective space {$P\left(\bigwedge^k \mathbb{R}^{N-1}\right)$\,}. We will explain in detail what this means\footnote{Although we are simply extending the computation of Newman \cite{Newman}, the details of this argument were not included in Newman's paper, and we are aware of some papers on Lyapunov exponents in the mathematical physics literature in which similar calculations are performed with incorrect justification.}.

The sum of the first $k$ Lyapunov exponents is the maximal Lyapunov exponent for the induced action of the matrix product sequence $\left(P(t)\right)_{t \ge 0}$ on the $k^{\textup{th}}$ exterior power~$\bigwedge^k \mathbb{R}^{N-1}$. This exterior power is the~$\binom{N-1}{k}$-dimensional Hilbert space with orthonormal basis \begin{equation}\label{orthonormal basis} (e_{i_1} \wedge \dots \wedge e_{i_k}\,:\, 1 \le i_1 < \dots < i_k \le N-1)\,.\end{equation}  Any linear map $L: \mathbb{R}^{N-1} \to \mathbb{R}^{N-1}$ induces a linear map $\hat{L}:  \bigwedge^k \mathbb{R}^{N-1} \to \bigwedge^k \mathbb{R}^{N-1}$ which acts on basis elements by $$\hat{L}: e_{i_1} \wedge \dots \wedge e_{i_k} \mapsto \left(Le_{i_1}\right) \wedge \dots \wedge \left(Le_{i_k}\right)$$ for each tuple $(i_1, \dots, i_k)$ such that $1 \le i_1 < \dots < i_k \le N-1$. 

To express $\left(Le_{i_1}\right) \wedge \dots \wedge \left(Le_{i_k}\right)$ in terms of the orthonormal basis~\eqref{orthonormal basis}, one expands the $k$-fold wedge using multilinearity together with the antisymmetry relation, which says that for any permutation $\sigma$ in the symmetric group $S_k$, and any $1 \le i_1 \le \dots\le i_k \le N-1$ we have {
 $$
 e_{i_{\sigma(1)}} \wedge \dots \wedge e_{i_{\sigma(k)}} = \mathrm{sgn}(\sigma) \,e_{i_1} \wedge \dots \wedge e_{i_k}\,.
 $$ }
(In particular, any $k$-fold wedge with repetition is $0$, which is why such wedges are not included in the basis.)  One can check that if $L_1$ and $L_2$ are two linear maps from $\mathbb{R}^{N-1}$ to $\mathbb{R}^{N-1}$ then $\widehat{(L_1 L_2)} = \hat{L}_1 \hat{L}_2$. The induced action of the identity map is the identity, so $\widehat{(L^{-1})} = (\hat{L})^{-1}$. In particular, 
 $$ \widehat{P(t)} = \widehat{M(t)}\widehat{M(t-1)} \dots \widehat{M(1)}\,,$$ and $\widehat{P(t)}$ is almost surely invertible.
 
Any invertible linear map on $\bigwedge^k \mathbb{R}^{N-1}$ descends to a self-map of the projective space $P\left(\bigwedge^k \mathbb{R}^{N-1}\right)$. The theorem of Guivarc'h and Raugi mentioned above states that there is a unique invariant probability measure~$\pi_k$ for this projective action. Let $\widehat{O(N-1)}$ denote the group of transformations of $P\left(\bigwedge^k \mathbb{R}^{N-1}\right)$ induced by the action of $O(N-1)$ on $\mathbb{R}^{N-1}$. Since $\pi_k$ is unique, it must be invariant with respect to $\widehat{O(N-1)}$, because the matrices $M(t)$ are invariant in law under conjugation by $O(N-1)$ and hence the matrices $\widehat{M(t)}$ are invariant in law under conjugation by the induced orthogonal transformations of $\bigwedge^p \mathbb{R}^{N-1}$. Furthermore, the measure~$\pi_k$ is supported on the closed subvariety $\overline{D} \subseteq P\left(\bigwedge^k \mathbb{R}^{N-1}\right)$ which is the image under projectivization of the set~$D$ of  \emph{decomposable} vectors in $\bigwedge^k \mathbb{R}^{N-1}$. A non-zero vector in $\bigwedge^k \mathbb{R}^{N-1}$ is called decomposable if it is of the form $v_1 \wedge \dots \wedge v_k$, where $v_1, \dots, v_k$ are any $k$ linearly independent vectors in $\mathbb{R}^{N-1}$. 

Let $M$ be a random matrix distributed like $M(1)$, and let $\hat{M}$ be the map that $M$ induces on $\bigwedge^k\mathbb{R}^{N-1}$.
The action of $\widehat{O(N-1)}$ on $\overline{D}$ is transitive, and this implies that for any two points $x, y \in D$, the distributions of $\frac{\|\hat{M}x\|}{\|x\|}$ and $\frac{\|\hat{M}y\|}{\|y\|}$ coincide. Indeed, there exists some $U \in O(N-1)$ such that $\hat{U}(x) \propto y$, and the law of $M$ is invariant under conjugation by $U$, so the law of $\hat{M}$ is invariant under conjugation by $\hat{U}$.

 Let $M_1^k$ denote the $N-1$ by $k$ matrix whose columns are the first $k$ columns of $M$. Applying Furstenberg's integral formula to compute the maximal Lyapunov exponent of the sequence $\left(\widehat{P(t)}\right)_{t \ge 1}$, we find that 
\begin{eqnarray*} 
\lambda_1(\alpha,\beta,N) + \dots + \lambda_k(\alpha,\beta,N) & = & \mathbb{E}\left(\log \frac{\|\hat{M}(e_1 \wedge \dots \wedge e_k)\|}{\|e_1 \wedge \dots \wedge e_k\|}\right)\\ & = &  \mathbb{E}\left(\log \|M(e_1 )\wedge \dots \wedge M(e_k)\|\right)\\ 
 & = & \frac{1}{2} \mathbb{E}\left(\log \det \left(\left(M_1^k\right)^T M_1^k \right)\right)\,. 
\end{eqnarray*}
  (Note that in the top-dimensional case $k=N-1$ this reduces to the usual determinant formula $\mathbb{E} \log (|\det M|)$ for the sum of all the Lyapunov exponents.) Comparing the above formulae for $k$ and $k-1$ (for any $2 \le k \le N-1$), we obtain
\begin{equation}\label{E: matrix expression for lambda k}
 \lambda_k(\alpha,\beta,N) = \frac{1}{2} \mathbb{E}\left(\log \frac{\det \left(\left(M_1^k\right)^T M_1^k\right)}{\det \left(\left(M_1^{k-1}\right)^T M_1^{k-1} \right) }\right)\,. 
\end{equation}
(Note that the determinants above are related to the volume of the relevant parallelepipeds.)

The ratio whose logarithm appears in the above formula is simply the squared length of the projection of row $k$ of~$M$ onto the orthogonal complement of the span of the first $k-1$ rows of~$M$. Row $k$ is the
sum of two independent vectors, namely $\beta e_k$ and $\frac{\sqrt{\alpha}(1-\beta)}{\sqrt{N}}G_k$, where $G_k$ is a mean zero Gaussian vector whose covariance matrix is~$I_{N-1}$. 
Let $S_{k-1}$ be the span of the first $k-1$ rows of $M$, and let $\pi_{k-1}$ denote the orthogonal projection onto the orthogonal complement of $S_{k-1}$. {Consider the three mutually orthogonal subspaces $S_{k-1}$, $\langle \pi_{k-1}(e_k) \rangle$, and $(S_{k-1} + \langle e_k \rangle )^\perp$, of dimensions $k-1$, $1$ and $N-k-1$. By projecting $G_k$ onto these three subspaces, we may express $G_k$ as the sum of three mutually orthogonal vectors $G_k = v_1 + v_2 + v_3$, where $v_1 \in S_{k-1}$ and $v_2$ is a scalar multiple of $\pi_{k-1}(e_k)$. Row $k$ of $M$ is a Gaussian vector independent of~$S_{k-1}$, with covariance a multiple of the identity. So conditional on $S_{k-1}$, the orthogonal summands $v_1$, $v_2$ and $v_3$ are conditionally independent Gaussian vectors. Moreover, conditional on $S_{k-1}$, $\|v_2\|^2$ has a $\chi^2(1)$ distribution, and $\|v_3\|^2$ has a $\chi^2(N-k-1)$ distribution, and these squared lengths are conditionally independent. Since these distributions do not depend on $S_{k-1}$, it  also holds unconditionally that $\|v_2\|^2$ and $\|v_3\|^2$ are independent with these two $\chi^2$ distributions (even though $v_2$ and $v_3$ are dependent).} Finally, $v_2$ is parallel or antiparallel to $\pi_{k-1}(e_k)$ with equal probability conditional on $\|v_2\|$. Recall that $\rho = \alpha(1-\beta)^2$. 
We obtain
\begin{eqnarray}\label{E: geometry}\frac{\det \left(\left(M_1^k\right)^T M_1^k\right)}{\det \left(\left(M_1^{k-1}\right)^T M_1^{k-1}\right) } & = & \left\| \left(\frac{\sqrt{\alpha}(1-\beta)}{\sqrt{N}}v_2 + P_{k-1}(\beta e_k)\right) + \frac{\sqrt{\alpha}(1-\beta)}{\sqrt{N}}v_3\right\|^2 \notag \\ & \stackrel{d}{=} & \left( \frac{\sqrt{\rho}}{\sqrt{N}} X_1 + \beta W_k\right)^2 + {\frac{\rho}{N}}\sum_{i=1}^{N-k} X_i^2\,,\end{eqnarray}
where $X_1, \dots, X_{N-k}$ are independent standard Gaussian random variables, independent of $W_k$,  and $W_k$ is the random variable distributed as $\|P_{k-1}(e_k)\|$. Combining equations~\eqref{E: matrix expression for lambda k} and~\eqref{E: geometry} yields the desired formula for~$\lambda_k(\alpha,\beta,N)$.
\end{proof}

\vspace{1cm}
\begin{proof}[Proof of part (b) of Theorem~\ref{t2}]
For $1 \le k \le N-2$, note that $W_k$ strictly stochastically dominates $W_{k+1}$. To see this, observe that $W_k$ is also the distribution of $P_{k-1}(e_{N-1})$, since the law of $S_{k-1}$ is invariant under the orthogonal transformation which exchanges coordinates $k$ and $N-1$. Note that $P_k = P_k P_{k-1}$ so $P_{k}(e_{N-1}) = P_{k}(P_{k-1}(e_{N-1})) $.
Define
$$
V_k = \left(\beta W_k + \frac{\sqrt{\alpha}(1-\beta)}{\sqrt{N}}X_1\right)^2 + \sum_{i=2}^{N-k}\frac{\alpha(1-\beta)^2}{N} X_i^2 \,.
$$
It follows that $\lambda_{k+1} < \lambda_k$, because $\lambda_k = \mathbb{E}(\log V_k)$ and $\lambda_{k+1} = \mathbb{E}\left(\log V_{k+1}\right)$, for coupled random variables such that $V_k > V_{k+1}$ almost surely.  
Note also that $0 \le W_k \le 1$ almost surely. Let 
$$
E_k=\{X_1 < 0
\text{ and }|X_{N-k}|=\max(|X_1|, \dots, |X_{N-k}|)\ge 1\}.
$$
Note that
\begin{itemize}
\item $\mathbb{P}(X_1<0)=1/2$ and the sign of $X_1$ is independent of the absolute values $|X_1|, \dots, |X_N|$,
\item 
$\mathbb{P}(\max(|X_1|,\dots,|X_{N-k}|)\ge 1)\ge \mathbb{P}(|X_1| \ge 1)= 2\,\mathbb{P}(X_1 \ge 1) > 0.3$, 
\item the events $|X_{N-k}|=\max(|X_1|,\dots,|X_{N-k}|) $ and $\max(|X_1|,\dots,|X_{N-k}|)  \ge 1 $ are independent,
\item 
by symmetry, $\mathbb{P}(|X_{N-k}| = \max(|X_1|, \dots, |X_{N-k}|)) = 1/(N-k)$.
\end{itemize} 
Therefore
\begin{align*} 
\mathbb{P}\left(E_k\right)  & =  \mathbb{P}(X_1 < 0,\ |X_{N-k}| \ge 1 \;\text{ and }\;|X_{N-k}| = \max(|X_1|, \dots, |X_{N-k}|))  
\\ & >  \frac12 \times 0.3 \times \frac1{N-k}=\frac{0.15}{N-k}\,.
\end{align*} 
Note that $0 \le W_k \le 1$ a.s. and on the event $X_1 < 0$ we have
$$
\left(\beta W_k + \frac{\sqrt{\alpha}(1-\beta)}{\sqrt{N}} X_1\right)^2 \le \beta^2 W_k^2 + \frac{\alpha(1-\beta)^2}{N} X_1^2 \le \beta^2 + \frac{\alpha(1-\beta)^2}{N} X_1^2\,.
$$ 
Therefore on the event $E_k$ we have
\begin{align*}
\frac{V_{k+1}}{V_k} &= 1 - \frac{\frac{\alpha(1-\beta)^2}{N}X_{N-k}^2}{\left(\beta W_k + \frac{\sqrt{\alpha}(1-\beta)}{\sqrt{N}}X_1\right)^2 + \sum_{i=2}^{N-k}\frac{\alpha(1-\beta)^2}{N} X_i^2} \le 1 - \frac{\frac{\alpha(1-\beta)^2}{N}X_{N-k}^2}{\beta^2  + \sum_{i=1}^{N-k}\frac{\alpha(1-\beta)^2}{N} X_{N-k}^2} \\ & \le 1 - \frac{1}{\frac{N\beta^2}{\alpha(1-\beta)^2} + N-k}.
\end{align*}
Since $V_{k+1} \le V_k$ surely, and for $0 < x \le 1$ we have $\log(1-x) \le -x$, we find
$$\mathbb{E}(\log(V_{k+1}/V_k)) \le \frac{-0.15}{(N-k)\left(\frac{N\beta^2}{\alpha(1-\beta)^2} + N-k\right)}\,, $$
which is to say that
$$\lambda_{k}- \lambda_{k+1} \ge \frac{0.15}{(N-k)\left(\frac{N\beta^2}{\alpha(1-\beta)^2} + N-k\right)}\,,$$ for every $k$ in the range $1 \le k \le N-2$. 
Notice that this lower bound on the gap between successive Lyapunov exponents is asymptotic to  $0.15\alpha(1-\beta)^2/(N(N-k))$ as $\beta$ approaches $1$ from below. 
\end{proof}

\begin{proof}[Proof of Theorem~\ref{t3}]
Since $\Cov^X(t)$ is a non-negative definite symmetric matrix, its operator norm is its largest eigenvalue.  Then the fact that $D_{1,1} > D_{2,2}$ implies that the right-hand side of equation~\eqref{E: Cov in terms of SVD} is dominated asymptotically by the contribution of the top-left entry of $D(t)$, and hence $$\frac{ \Cov^X(t)}{\|\Cov^X(t)\|} \; \to \; \frac{C}{\|C\|}, \;\text{ where }\;\; C = \bar{X}(0)^T\,V^TB_1^T\,B_1V\bar{X}(0)\,.$$ The limit above is a random $d$ by $d$ matrix of rank one (since $B_1$ has rank one), as long as $C \neq 0$. It remains to check that $C \neq 0$ almost surely. Indeed, $C$ could only be zero if $B_1 V \overline{X}(0) = 0$, but we assumed that the initial opinions are almost surely not all equal, which implies that $V\overline{X}(0) \neq 0$; then since $B_1$ is uniformly distributed on the unit sphere $\mathbb{S}^{N-2}$, up to its sign ambiguity, the probability that $B_1$ lies in the left kernel of~$V\overline{X}(0)$ is zero. 

We defer the proof of equation~\eqref{eq:t3} to the next section; see Lemma~\ref{lem:lam1lam2} below.
\end{proof}

\subsection{Some properties of the Lyapunov exponents \m{\lambda_i(\alpha,\beta,N)}}\label{S: some properties}

In the next statement, we use the exponential integral defined for $x>0$ as
$$
\mathrm{Ei}_1(x)=\int_1^\infty \frac{e^{-xu}}{u}du=\int_0^x \frac{1-e^{-u}}{u}du-\gamma- \log(x).
$$
\begin{proposition}\label{prop:Nodd}
Suppose that $N$ is odd and $\beta>0$. Let $m=(N-1)/2$. Then
 \begin{align}\label{E: analytic expression for lambda1}
\lambda_1(\alpha,\beta,N)&=\log\beta
+\frac12\left( \mathrm{Ei}_1 \left(z\right)
+ \sum_{i=1}^{m-1} \frac{(-1)^i\, (i-1)!}{z^i}
\left[e^{-z}-\binom{m-1}{i}\right]\right)
\end{align}
where $z=\frac{N\beta^2}{2\rho}=\frac{N\beta^2}{2\alpha (1-\beta)^2}$.
\end{proposition}
\begin{lemma}\label{lem:lambda1asympt}
As $N\to\infty$ with $\alpha$, $\beta$ and $\rho  = \alpha(1-\beta)^2$ all fixed,
\begin{align}\label{lam1asy}    
\lambda_1(\alpha,\beta,N)
&=\frac12
\log \left(\rho+\beta^2\right) 
+o(1).
\end{align}
\end{lemma}
\begin{proof}
Set $z=Ny/2$, where $y=\frac{\beta^2}{\rho}=\frac{\beta^2}{\alpha(1-\beta)^2}=O(1)$ and $y > 0$. Recalling that $m=(N-1)/2$,  from Theorem~\ref{t1} we have
\begin{align*}
2\lambda_1&=\phi_{\frac{N-1}2}\left(\frac{N\beta^2}{2\rho}\right)+\log \frac{2\rho}N\\
&=\psi\left(\frac{N-1}2\right)
+\int_0^1 \frac{\left(1-e^{-\frac{Nys}2}\right)(1-s)^{\frac{N-3}2}\mathrm{d}s}s
+\log\frac{2\rho}N
=\log \rho +o(1)
+I
\end{align*}
where
$$
I=\int_0^1 \frac{\left(1-e^{-\frac{Nys}2}\right)(1-s)^{\frac{N-3}2}\mathrm{d}s}s=
  \int_0^N \frac{(1-e^{-\frac{yt}2})\left(1-\frac{t}N\right)^{\frac{N-3}2}\mathrm{d}t}t 
$$
since $\psi(x)=\log x+ O(1/x)$ as $x \to \infty$, see \cite[6.3.21]{AS}.
As $\ln(1-x)\le -x$ for all $\in[0,1)$, 
$$
 \frac{N-3}2\log\left(1-\frac tN\right)
\le
-\frac{t}2\left(1-\frac3N\right)\,.
$$
Since $\ln(1-x)\ge -x-x^2$ for $0 \le x \le \frac{1}{2}$, when $t \le N/2$ we also have
$$
\frac{N-3}2\log\left(1-\frac tN\right)\ge
\frac{N-3}2\left(-\frac tN-\frac {t^2}{N^2}\right)
\ge -\frac t2\,.
$$
Let 
$$
I_*(k,M)=\int_0^M \left(1-e^{-\frac{yt}2}\right)e^{-\frac{kt}2}\frac{\mathrm{d}t}t .
$$
Then
\begin{align*}
 I_*(1-3/N,N)\ge 
I& \ge \int_0^{N/2}\left(1-e^{-\frac{yt}2}\right)\left(1-\frac tN\right)^{\frac{N-3}{2}}\frac{\mathrm{d}t}t 
\ge I_*(1,N/2).
\end{align*}
On the other hand, if $I_*(k,\infty)=\lim_{M\to\infty} I_*(k,M)$, then $0\le I_*(k,\infty)-I_*(k,M)\le \frac2k e^{-kM/2}$ and $I_*(k,M)$ is continuous in $k$, therefore
$$
I=\int_0^\infty \left(1-e^{-\frac{yt}2}\right)e^{-\frac{t}2}\frac{\mathrm{d}t}t +o(1) 
$$
as $N\to\infty$. The integral $I$ can be computed by differentiating it with respect to $y$: if 
$$
f(y)=\int_0^\infty \left(1-e^{-\frac{yt}2}\right)e^{-\frac{t}2}\frac{\mathrm{d}t}t
=\int_0^\infty \left(1-e^{-yt}\right)e^{-t}\frac{\mathrm{d}t}t
$$ then
$$
f'(y)=\int_0^\infty
e^{-(y+1)t}\mathrm{d}t=\frac1{y+1}, \quad f(0)=0.
$$
So $I=f(y) + o(1)=\ln(y+1) + o(1)$,
yielding~\eqref{lam1asy}.
\end{proof}
\begin{remark} By differentiating $I_*(k,\infty)$ with respect to $k$, bounding the derivative, and integrating again, one can obtain an error bound of the form $O(y/N) + O(e^{-N/2})$, where the implied constants do not depend on $y$.
\end{remark}
\begin{remark}
For large $N$ and fixed $\beta$, this expression for $\lambda_1$ equals zero when
$$
\alpha=\alpha_\mathrm{cr}(\beta)= \frac{1+\beta}{1-\beta}+o(1).
$$
where $\alpha_\mathrm{cr}=\alpha_\mathrm{cr}(\beta)$ is defined as the value of $\a$ which makes $\lambda_1$ equal to zero. 
\end{remark}

\begin{lemma}\label{lem:lam1lam2}
As $N\to\infty$,
\begin{align}\label{eq:lam1lam2}
\lambda_1-\lambda_2=\frac1{2N}
\left(1-\left[\frac{\beta^2}{\rho+\beta^2}\right]^2\right)+o\left(\frac1N\right) 
\end{align}
(in particular, if $\beta=0$, we get $\lambda_1-\lambda_2=\frac{1+o(1)}{2N}$.)
\end{lemma}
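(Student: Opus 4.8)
The plan is to read off the asymptotics directly from the exact formula for the Lyapunov exponents in part~(a) of Theorem~\ref{t2}, specialised to $k=1$ and $k=2$. Recall that $W_1 \equiv 1$, while $W_2$ has the law of the Euclidean length of the projection of $e_2$ onto the orthogonal complement of the first row $M_1 = \beta e_1 + \sqrt{\rho/N}\,G_1$ of $M(1)$, where $G_1$ is a row of i.i.d.\ standard Gaussians. First I would realise both expectations on a single probability space, taking i.i.d.\ standard Gaussians $X_1, \dots, X_{N-1}$ together with $W_2$ independent of all the $X_i$, and writing
$$ A_1 = \left(\beta + \tfrac{\sqrt{\rho}}{\sqrt N}X_1\right)^{2} + \tfrac{\rho}{N}\sum_{i=2}^{N-1}X_i^{2}, \qquad A_2 = \left(\beta W_2 + \tfrac{\sqrt{\rho}}{\sqrt N}X_1\right)^{2} + \tfrac{\rho}{N}\sum_{i=2}^{N-2}X_i^{2}, $$
so that $\lambda_1-\lambda_2 = \tfrac12\,\mathbb{E}\log(A_1/A_2)$. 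Expanding the difference of two squares, $A_1 - A_2 = \beta^{2}(1-W_2^{2}) + \tfrac{2\beta\sqrt{\rho}}{\sqrt N}X_1(1-W_2) + \tfrac{\rho}{N}X_{N-1}^{2}$.

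Next I would record two elementary limits. Writing $\|M_1\|^{2} = (\beta + \sqrt{\rho/N}\,G_{1,1})^{2} + \tfrac{\rho}{N}\sum_{j=2}^{N-1}G_{1,j}^{2}$, the strong law of large numbers gives $\|M_1\|^{2}\to\rho+\beta^{2}$ almost surely, and since $W_2^{2} = 1 - \rho G_{1,2}^{2}/(N\|M_1\|^{2})$ we get $N(1-W_2^{2}) = \rho G_{1,2}^{2}/\|M_1\|^{2}\to\rho G_{1,2}^{2}/(\rho+\beta^{2})$ almost surely. Since $N(1-W_2^{2})\le N B_N$ with $B_N\sim\mathrm{Beta}(\tfrac12,\tfrac{N-3}{2})$, a variable that is bounded in $L^{2}$, this convergence upgrades to convergence of expectations, $\mathbb{E}[N(1-W_2^{2})]\to\rho/(\rho+\beta^{2})$, and likewise $\mathbb{E}[(1-W_2^{2})^{2}] = O(1/N^{2})$. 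In the same way $A_2 \to \rho+\beta^{2}$ almost surely, and since $A_2 \ge \tfrac{\rho}{N}\sum_{i=2}^{N-2}X_i^{2} = \tfrac{\rho}{N}\chi^{2}_{N-3}$, all negative moments $\mathbb{E}[A_2^{-m}]$ stay bounded in $N$, so $1/A_2$ is uniformly integrable and $\mathbb{E}[1/A_2]\to 1/(\rho+\beta^{2})$.

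I would then linearise the logarithm. On the event $\{A_2 \ge (\rho+\beta^{2})/2\}$ the bound $|\log(1+x)-x|\le x^{2}$ for $|x|\le\tfrac12$, together with $\mathbb{E}[(A_1-A_2)^{2}] = O(1/N^{2})$ (each of the three summands of $A_1-A_2$ is $O(1/N)$ in $L^{2}$, using the mutual independence of $X_1$, $W_2$ and $X_{N-1}$), shows that $\log(A_1/A_2)$ and $(A_1-A_2)/A_2$ differ by something whose $L^{1}$-norm is $O(1/N^{2})$; on the complement, whose probability is $o(1/N^{2})$ by a chi-squared concentration bound, Cauchy--Schwarz and the uniform $L^{2}$-boundedness of $\log A_1$, $\log A_2$ and $A_1/A_2$ (the latter because $\mathbb{E}[A_1^{4}]$ and $\mathbb{E}[A_2^{-4}]$ are bounded) dispatch the contribution. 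Hence $\lambda_1-\lambda_2 = \tfrac12\,\mathbb{E}[(A_1-A_2)/A_2] + o(1/N)$. Finally I evaluate $\mathbb{E}[(A_1-A_2)/A_2]$ term by term: the first summand gives $\tfrac{\beta^{2}}{N}\,\mathbb{E}[N(1-W_2^{2})/A_2] = \tfrac{\beta^{2}}{N}\cdot\tfrac{\rho}{(\rho+\beta^{2})^{2}} + o(1/N)$; the cross term is $O(N^{-3/2})$, because $X_1$ is mean zero and independent of $W_2$ and of $\sum_{i=2}^{N-2}X_i^{2}$ (which bounds $A_2$ below) while $\mathbb{E}[1-W_2] = O(1/N)$; and the third summand gives $\tfrac{\rho}{N}\,\mathbb{E}[X_{N-1}^{2}]\,\mathbb{E}[1/A_2] = \tfrac{\rho}{N}\cdot\tfrac{1}{\rho+\beta^{2}} + o(1/N)$, since $X_{N-1}$ is independent of $A_2$. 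Adding these, $\mathbb{E}[(A_1-A_2)/A_2] = \tfrac1N\cdot\tfrac{\rho(2\beta^{2}+\rho)}{(\rho+\beta^{2})^{2}} + o(1/N)$, so $\lambda_1-\lambda_2 = \tfrac{\rho(\rho+2\beta^{2})}{2N(\rho+\beta^{2})^{2}} + o(1/N)$, which is exactly~\eqref{eq:lam1lam2} because $(\rho+\beta^{2})^{2}-\beta^{4} = \rho(\rho+2\beta^{2})$.

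The main obstacle is the bookkeeping in the linearisation step — showing that the atypical events, where $A_2$ is unusually small or some Gaussian factor is unusually large, contribute only $o(1/N)$. Concretely this rests on the uniform integrability of $N(1-W_2^{2})$ and of $1/A_2$, the uniform $L^{2}$-boundedness of $\log A_1$, $\log A_2$ and $A_1/A_2$, and a polynomial (in fact exponential) bound on $\mathbb{P}(A_2 < (\rho+\beta^{2})/2)$; each is a routine chi-squared moment or concentration estimate, but they must be assembled with some care.
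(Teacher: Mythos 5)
Your argument is correct and takes a genuinely different route from the paper. The paper writes $Q = (\rho+\beta^2)(1+\eta)$ and must expand $\log(1+\eta)$ to second order: because $\eta$ has fluctuations of size $O(1/\sqrt{N})$ (from the terms $2\beta W_2\sqrt{\rho/N}\,X_1$ and $\tfrac{\rho}{N}(\chi^2_{N-3}-N)$), the $\eta^2/2$ contribution is itself of order $1/N$ and cannot be dropped, so the paper computes both $\mathbb{E}[\eta]$ and $\mathbb{E}[\eta^2]$ using Massart--Laurent chi-squared concentration, does this for $\lambda_2$ (and implicitly the analogous expansion for $\lambda_1$), and only then subtracts. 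You instead couple the two expectations on a single probability space so that $\lambda_1 - \lambda_2 = \tfrac12\,\mathbb{E}\log(A_1/A_2)$ with $A_1 - A_2 = O(1/N)$ in $L^2$: the coupling builds in the cancellation that the paper only achieves at the very end by subtraction, and a first-order expansion of $\log$ then suffices with $O(1/N^2)$ remainder. The three summands of $A_1-A_2$ are treated term by term via the almost-sure limits $N(1-W_2^2)\to\rho G_{1,2}^2/(\rho+\beta^2)$ and $A_2\to\rho+\beta^2$, upgraded to convergence of expectations by $L^p$-boundedness and a chi-squared tail bound. This route eliminates the second-moment computation entirely, at the cost of making the coupling and the uniform-integrability bookkeeping explicit, which you do adequately. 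One small point to tidy: your justification that the cross term is $O(N^{-3/2})$ appeals to $X_1$ being mean zero and independent, but $X_1$ also appears inside $A_2$, so the exact cancellation needs more than independence; it is simpler just to bound $A_2 \ge \tfrac{\rho}{N}\chi^2_{N-3}$ and use the mutual independence of $X_1$, $W_2$, and $\sum_{i=2}^{N-2}X_i^2$, giving $\mathbb{E}\bigl[\lvert X_1(1-W_2)\rvert /A_2\bigr] = O(1/N)$ and hence, after the $\sqrt{\rho/N}$ prefactor, $O(N^{-3/2})$. The final arithmetic, including the identity $(\rho+\beta^2)^2 - \beta^4 = \rho(\rho+2\beta^2)$, is correct and matches~\eqref{eq:lam1lam2}.
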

\begin{proof}
From Theorem~\ref{t2} we have
$2\lambda_2=\E\log Q\,,$ 
where
\begin{align}\label{eq:Q}
Q=\left(\beta W_2 + \frac{\sqrt{\rho}}{\sqrt{N}}X_1\right)^2 + \sum_{i=2}^{N-2}\frac{\rho}{N} X_i^2,    
\end{align}
where $X_1, \dots, X_{N-2}$ are i.i.d.~standard normal. It is easy to verify that 
\begin{align}\label{eq:W_2}
W_2^2=1-\frac{\rho}{N}\cdot
\frac{\xi_2^2}{\left(\beta+\sqrt{\frac{\rho}{N}}\xi_1\right)^2
+\frac{\rho}{N}\left(\xi_2^2+\xi_3^2+\dots+\xi_{N-1}^2\right)}
\in [0,1]
\end{align}
where $\xi_1, \dots, \xi_{N-1}$ are i.i.d.\ standard normal, independent of $X_1, \dots, X_{N-2}$.
{Indeed, the first row of $M(1)$ can be written as $\left(\beta + \sqrt{\frac \rho N}\xi_1,\sqrt{\frac \rho N}\xi_2,\dots,\sqrt{\frac \rho N}\xi_{N-1}\right)$, and the projection of $e_2$ on the linear space spanned by this vector can be obtained by minimizing 
$$
\left[k\left(\beta + \sqrt{\frac \rho N}\xi_1 \right)\right]^2
+\left[k\sqrt{\frac \rho N}\xi_2-1\right]^2
+\left[k\sqrt{\frac \rho N}\xi_3\right]^2
+\dots
+\left[k\sqrt{\frac \rho N}\xi_{N-1}\right]^2
$$
with respect to $k\in\R$.
Then, using the Pythagoras theorem, we obtain~\eqref{eq:W_2}.}
Hence
\begin{align}
\label{eq:lambda2eta}
2\lambda_2-\log(\rho+\beta^2)=\E\log Q-\log(\rho+\beta^2)=\mathbb{E}\log\left(
1+\eta\right)\,,    
\end{align} 
where 
$$
\eta=\frac{\beta^2 (W_2^2-1) + 
2\beta W_2\frac{\sqrt{\rho}}{\sqrt N} X_1
+ \frac{\rho}{N}X_1^2+\frac{\rho}{N}\left[\sum_{i=2}^{N-2} X_i^2 -N\right]}
{\rho+\beta^2}.
$$
For the rest of the calculations, we assume that $N$ is large. Define the events 
\begin{align*}
\begin{array}{clrl}
A_0&=\left\{|\xi_3^2+\dots+\xi_{N-1}^2-N|<N^{5/8}\right\}, &
A_1&=\left\{|\xi_1|<N^{1/8},\ |\xi_2|<N^{1/8}\right\}, 
\\
B_0&=\left\{|X_2^2+\dots+X_{N-2}^2-N| <N^{5/8}\right\}, &B_1&=\left\{|X_1|<N^{1/8}\right\},
\end{array}
\end{align*}
and let $\mathcal{A}=A_0\cap A_1\cap B_0\cap B_1$. 
The  Massart and Laurent bounds (Lemma~1 in~\cite{LM}) for the chi-squared distribution~$\chi$ with~$k$ degrees of freedom state that
\begin{align*}
\mathbb{P}\left(\chi-k>2\sqrt{kx}+2x\right)\le e^{-x};\qquad
\mathbb{P}\left(\chi-k<-2\sqrt{kx}\right)\le  e^{-x}.
\end{align*}
Hence, noting that the events in $A_0$ and $B_0$ deal with chi-squared random variables with $N-3$ degrees of freedom, and using the trivial bounds for standard normal random variables, we conclude (set $x=N^{1/4}$) that
\begin{align}
\label{eq:Acal}
\mathbb{P}(\mathcal{A}^c)\le c_1 \exp\left(-c_2 N^{1/4}\right)   
\end{align}
for some $c_1,c_2>0$.

On $\mathcal{A}$ we have $|W_2^2-1|\le N^{-3/4}(1+o(1))$ and thus $\eta=O(N^{-3/8})$. As a result,
$$
\log(1+\eta)=\eta-\frac{\eta^2}{2}+c_{\eta}\eta^3
$$
where we can assume that $0\le c_\eta\le 1$.
Consequently,
$$
\E\left[\log(1+\eta) \1_{\mathcal{A}}\right]=\E[\eta\1_{\mathcal{A}}]
-\frac{\E[\eta^2\1_{\mathcal{A}}]}{2}+O(N^{-9/8})
$$
We have
\begin{align*}
(\rho+\beta^2)\E[\eta\1_{\mathcal{A}}]
&=
-\frac{\rho\beta^2}{N} 
\E\left[ 
\frac{\xi_2^2 \, \1_{\mathcal{A}}}{(\beta+\sqrt{\rho/N}\xi_1)^2+\rho \xi_2^2/N+\rho(\xi_3^2+\dots+\xi_{N-1}^2)/N  }\right]
\\ & +
\E\left[ 2\beta W_2\frac{\sqrt{\rho}}{\sqrt N} X_1
\1_{\mathcal{A}}\right]
+ \E\left[ \frac{\rho}{N}\left(\sum_{i=1}^{N-2} X_i^2 -N\right)
\1_{\mathcal{A}}\right]
\\ & =-\frac{\rho\beta^2}{N} \cdot \frac{1+o(1)}{\rho+\beta^2}
+0-\frac{2\rho+o(1)}{N}=-\frac{\rho}{N}\cdot\frac{2\rho+3\beta^2}{\rho+\beta^2}+o\left(\frac1N\right)
\end{align*}
since $X_1$ is symmetric even on $\mathcal{A}$, and 
$\E\left[ \left(\sum_{i=1}^{N-2} X_i^2 -N\right) \1_{\mathcal{A}^c}\right]$ and $ \E\left[ \xi_2^2\1_{\mathcal{A}^c}\right]$ are exponentially small.
Further,
\begin{align*}
(\rho+\beta^2)^2\E[\eta^2\1_{\mathcal{A}}]
&=
\E\left[
\left(
\beta^2(W_2^2-1) + 
2\beta W_2\frac{\sqrt{\rho}}{\sqrt N} X_1
+ \frac{\rho}{N}X_1^2+\frac{\rho}{N}(Y-3)
\right)^2
\1_{\mathcal{A}}\right]
\\
&=
\frac{4\beta^2\rho+o(1)}{N}
+0+\E\left[
\left(
\beta^2(W_2^2-1) + \frac{\rho}{N}X_1^2+\frac{\rho}{N}(Y-3)\right)^2\1_{\mathcal{A}}\right]
\\ &=
\frac{4\beta^2\rho}{N}
+\frac{\rho^2}{N^2}\E\left[Y^2\1_{\mathcal{A}}\right]
+o\left(\frac1N\right) 
=
\frac{4\beta^2\rho}{N}
+\frac{2\rho^2(N-3)}{N^2}
+o\left(\frac1N\right) 
\\ 
&=
\frac{\rho(2\rho+4\beta^2)}{N}
+o\left(\frac1N\right) 
\end{align*}
where $Y=\sum_{i=2}^{N-2} X_i^2 -(N-3)$ is centred chi-squared random variable with $N-3$ degrees of freedom,  using again the fact that $X_1$ is symmetric, and 
$\E\left[Y^2\1_{\mathcal{A}^c}\right]$  is exponentially small. Summarizing,
$$
\E\left[\log(1+\eta) \1_{\mathcal{A}}\right]=
-\frac{\rho}{N}\left[\frac{2\rho+3\beta^2}{(\rho+\beta^2)^2}+
\frac{\rho+2\beta^2}{(\rho+\beta^2)^2}
\right]+o\left(\frac1N\right) 
=
-\frac{\rho}{N}\frac{3\rho+5\beta^2}{(\rho+\beta^2)^2}+o\left(\frac1N\right) 
$$
On the other hand, by the Cauchy-Schwarz inequality and~\eqref{eq:Acal} we obtain
\begin{align}\label{eq:CS}
 \left|\E \left[\log(Q)\, \1_{\mathcal{A}^c}\right]\right|
 &\le\sqrt{\E\log^2 (Q)}
 \sqrt{\E\left[ \1_{\mathcal{A}^c}\right]}
 =\sqrt{\E\log^2(Q)}\cdot \sqrt{\P(\mathcal{A}^c)}
 \nonumber
\\ &\le \sqrt{\E\log^2(Q)} \cdot c_1^{1/2} \exp\left(-c_2N^{1/4}/2\right)
\end{align}
where $Q$ is defined by~\eqref{eq:Q}. Since
\begin{align*}
\E \left[\log^2(Q)\, \1_{Q\ge 1}\right]&\le
\E \left[\log^2(Q)\, \1_{Q\ge 1}\right] \le \E[ Q^2]=O(1);
\\
\E \left[\log^2(Q)\, \1_{Q< 1}\right]&\le
\E \left[Q^{-1}\, \1_{Q< 1}\right] \le \E [Q^{-1}]
\le \frac{N}{\rho}\E\left[\frac{1}{X_2^2+\dots+X_{N-2}^2}
\right]=\frac{N}{\rho(N-5)}=O(1),
\end{align*}
equations~\eqref{eq:lambda2eta} and~\eqref{eq:CS} imply that
$\E\left[\log(1+\eta) \1_{\mathcal{A}^c}\right]=o(1/N)$ and thus
$$
\E\left[\log(1+\eta)\right]=
-\frac{\rho}{N}\frac{3\rho+5\beta^2}{(\rho+\beta^2)^2}+o\left(\frac1N\right).
$$
Combining this with~\eqref{eq:lambda2eta} and subtracting~\eqref{lam1asy}, we get
\begin{align*}
\lambda_1-\lambda_2&=
\frac{\rho\,\left(\rho+2\beta^2\right)}{2N(\rho+\beta^2)^2}+o\left(\frac1N\right) 
\,.
\end{align*}
\end{proof}

\begin{figure}
  \begin{center} 
  \includegraphics[scale=0.3]{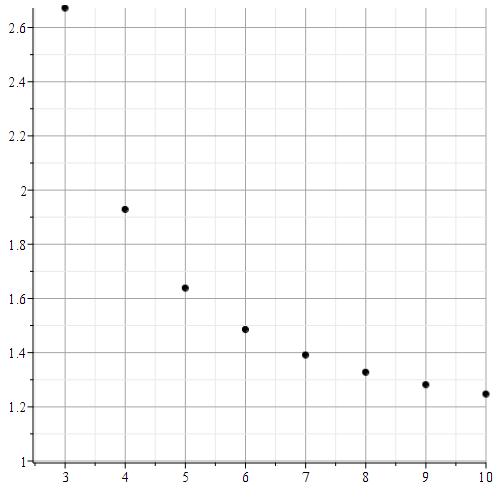}
  \caption{$\alpha_{\mathrm{cr}}$ as function of $N$ for $\beta = 0$}
  \label{figacrit}
  \end{center}
\end{figure}

\vskip 1cm

In the particular case $\beta = 0$, the expectation in~\eqref{E: maximal exponent} is straightforward to compute analytically, and evaluates (see \cite{CohenNewman}) to $$
\lambda_1(\alpha,0,N) = \frac{\log \frac{2\alpha}{N} +  \psi\left(\frac{N-1}{2}\right)}{2}\,.
$$
Trivially, $\lambda_1$ is an increasing function of $\alpha$ and the critical value of $\alpha$
is
$$
\alpha_{\mathrm{cr}}(0) = \frac{N}{2}\exp\left(-\psi\left(\frac{N-1}{2}\right)\right)\,,
$$
(see Figure~\ref{figacrit}). The other Lyapunov exponents are also known in the $\beta=0$ case (see~\cite{Newman}). For $k = 1, \dots, N-1$ we have 
$$
\lambda_k = \frac{\log \frac{2\alpha}{N} +  \psi\left(\frac{N-k}{2}\right)}{2}\,.
$$

For $\beta>0$ we have the following bound.
\begin{proposition}
For $\beta > 0$, we have 
$$
\alpha_\mathrm{cr}(\beta) < \frac{\alpha_\mathrm{cr}(0)}{(1-\beta)^2}
 =\frac{N\, e^{-\psi\left(\frac{N-1}{2}\right)}}{2(1-\beta)^2}\,.
$$
\end{proposition}
\begin{proof}
From Theorem~\ref{t1}, we see that $\lambda_1=0$ if and only if
$$
\phi_{\frac{N-1}{2}}
\left(\frac{N\beta^2}{2\rho}\right)
+\log \frac{2\rho}N =0
$$
where $\rho=\alpha(1-\beta)^2$. Let $\rho_\mathrm{cr}(\beta)=(1-\beta)^2\alpha_\mathrm{cr}(\beta)$ be the $\rho$ for which the displayed equality above holds for a given $\beta$.
Since $\phi_m(x)>\phi_m(0)\equiv \psi(m)$ by Lemma~\ref{lem:phi}, we get for $\beta>0$
$$
\log \frac{2\rho_\mathrm{cr}(\beta)}N =
-\phi_{\frac{N-1}{2}}
\left(\frac{N\beta^2}{2\rho_\mathrm{cr}(\beta)}\right)
<-\phi_{\frac{N-1}{2}}(0)=\log \frac{2\rho_\mathrm{cr}(0)}N
$$
yielding $\rho_\mathrm{cr}(\beta)<\rho_\mathrm{cr}(0)$.
\end{proof}

\vskip 1cm
\begin{lemma}\label{lemrhomonotone}
For $N\ge 3$ (i.e., $m\ge 1$) the function $\rho_\mathrm{cr}(\beta)$ is decreasing in~$\beta$.    
\end{lemma}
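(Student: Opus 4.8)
The plan is to treat $\rho_\mathrm{cr}(\beta)$ as an implicit function and differentiate. Fix $N \ge 3$, set $m = (N-1)/2 \ge 1$, and for $\beta \in (0,1)$ and $\rho > 0$ write $z = z(\beta,\rho) = \frac{N\beta^2}{2\rho}$. By Theorem~\ref{t1}, $\lambda_1 = 0$ exactly when
$$ F(\beta,\rho) := \phi_m(z) + \log\frac{2\rho}{N} = 0, $$
so $\rho_\mathrm{cr}(\beta)$ is characterised as the solution $\rho$ of $F(\beta,\rho) = 0$ (equivalently, $g(z) = 2\log\beta$ where $g(z) = \log z - \phi_m(z)$, since $\lambda_1 = \log\beta - \tfrac12 g(z)$).

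The single estimate doing all the work is
$$ 0 < z\,\phi_m'(z) < 1 \qquad \text{for every } z > 0 \text{ and every } m \ge 1. $$
Both bounds come from the Poisson representation $\phi_m'(z) = \mathbb{E}\!\left(\tfrac{1}{m+\xi}\right)$ with $\xi \sim \mathrm{Poi}(z)$, established in~\eqref{eq:phiprime}. The left inequality is immediate since $\phi_m'(z) > 0$; for the right one, $m \ge 1$ gives $\mathbb{E}\!\left(\tfrac1{m+\xi}\right) \le \mathbb{E}\!\left(\tfrac1{1+\xi}\right) = e^{-z}\sum_{k\ge 0}\frac{z^k}{(k+1)!} = \frac{1-e^{-z}}{z} < \frac1z$.

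Given this, the rest is formal. From $\partial_\rho z = -z/\rho$ and $\partial_\beta z = 2z/\beta$ one computes
$$ \frac{\partial F}{\partial \rho} = \frac{1 - z\,\phi_m'(z)}{\rho} > 0, \qquad \frac{\partial F}{\partial \beta} = \frac{2z\,\phi_m'(z)}{\beta} > 0, $$
so the implicit function theorem makes $\rho_\mathrm{cr}(\beta)$ differentiable with
$$ \rho_\mathrm{cr}'(\beta) = -\frac{\partial_\beta F}{\partial_\rho F} = -\frac{2\rho\, z\,\phi_m'(z)}{\beta\bigl(1 - z\,\phi_m'(z)\bigr)} < 0, $$
which is exactly the claim. (Equivalently, since $g(z_\mathrm{cr}) = 2\log\beta$ and $z\,g'(z) = 1 - z\,\phi_m'(z) \in (0,1)$, one can logarithmically differentiate the identity $\rho_\mathrm{cr} = \frac{N\beta^2}{2 z_\mathrm{cr}(\beta)}$ to reach the same conclusion.)

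The only non-routine ingredient is $z\,\phi_m'(z) < 1$, and this is precisely where the hypothesis $m \ge 1$ (i.e.\ $N \ge 3$) is used, namely in the monotone comparison $\mathbb{E}\!\left(\tfrac1{m+\xi}\right) \le \mathbb{E}\!\left(\tfrac1{1+\xi}\right)$. I do not anticipate any other obstacle; in particular the positivity $\partial_\rho F > 0$ needed to invoke the implicit function theorem is a byproduct of the same estimate.
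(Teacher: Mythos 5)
Your proof is correct and follows essentially the same route as the paper: differentiate the implicit relation $\phi_m(z)+\log\rho_{\mathrm{cr}}=\mathrm{const}$ and conclude from the key estimate $0<z\,\phi_m'(z)<1$ for $m\ge 1$, which is the same bound the paper establishes in~\eqref{E: zphiprime bound}. Your probabilistic phrasing $\mathbb{E}\bigl(\tfrac{1}{m+\xi}\bigr)\le\mathbb{E}\bigl(\tfrac{1}{1+\xi}\bigr)=\tfrac{1-e^{-z}}{z}$ is exactly the paper's termwise series comparison rewritten in Poisson language, and your explicit $\partial_\rho F>0$ step makes the implicit-function-theorem appeal a bit cleaner but does not change the argument.
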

\begin{proof}
Indeed,
$\rho_\mathrm{cr}(\beta)$ satisfies
$$
\phi_m(z)+\log \rho_\mathrm{cr}(\beta)\equiv const
$$
where $z=N\beta^2/(2\rho_\mathrm{cr}(\beta))$. Differentiating w.r.t.\ $\beta$ we get
$$
\phi_m'(z)\left[\frac{N\beta}{\rho}-\frac{N\beta^2}{2\rho^2}\rho'\right]+\frac{\rho'}{\rho}=0
\quad
\Longrightarrow
\quad
\rho'(z\phi_m'(z)-1)= N\beta \phi_m'(z).
$$
But $\phi_m(\cdot)$ is an increasing function with $\phi_m'>0$, and by~\eqref{eq:phiprime} 
\begin{equation}\label{E: zphiprime bound}
z\phi_m'(z)=z e^{-z}\sum_{j=0}^\infty \frac{z^j}{j!(m+j)}
\le  e^{-z}\sum_{j=0}^\infty \frac{z^{j+1}}{j!(j+1)}
= e^{-z}\sum_{k=1}^\infty \frac{z^k}{k!}
=1-e^{-z}<1
\end{equation}
since $m\ge 1$, yielding $\rho'(\beta)<0$ for all $\beta\in[0,1)$. As a result
$$
\alpha_\mathrm{cr}(\beta)(1-\beta)^2<\alpha_\mathrm{cr}(\beta_0)(1-\beta_0)^2
$$
whenever $\beta>\beta_0\ge 0$ and $N\ge 3$.
\end{proof}
\begin{remark}
It seems from numerics that Lemma~\ref{lemrhomonotone} is true for $N=2$ as well, but we do not have a proof of this. \end{remark}

\begin{proposition}\label{P: alpha crit near 1}
For $N \ge 4$ fixed, we have
$$ \alpha_{\mathrm{cr}}(\beta) \sim \frac{2N}{N-3} \,\cdot\, \frac{1}{1-\beta} \quad \text{as $\beta \nearrow 1$.}$$
\end{proposition}
\begin{proof}
Make $z$ depend on $\beta$ by substituting $\alpha =\alpha_{\mathrm{cr}}(\beta)$ into $z = \frac{N \beta^2}{2\alpha(1-\beta)^2}$. From the equality 
$$
\lambda_1(\alpha_{\mathrm{cr}}(\beta),\beta,N) = \log \beta + \frac{1}{2}\left[\phi_{\frac{N-1}{2}}(z) - \log z\right]  = 0
$$ 
we find that $\log \beta$ is equal to a function of $z$. By the inequality~\eqref{E: zphiprime bound}, this function of $z$ is decreasing in $z$. Using Lemma~\ref{lem:phiasympt} we obtain
$$
0=2\log\beta+\frac{m-1}{z}(1 +o(1))
$$
so that we must have $z \to \infty$ as $\beta \nearrow 1$. Hence
$$
0=2\log\beta+\frac{m-1}{z}(1 + o(1)))
=2\log\beta+\frac{(N-3)/2}{\frac{N\beta^2}{2\alpha_{\mathrm{cr}}(\beta)(1-\beta)^2}}( 1 + o(1))
$$
so that 
$$
\alpha_\mathrm{cr}(\beta) \sim \frac{2N}{N-3}\cdot \frac{\log \beta^{-1}}{(1-\beta)^2} \sim \frac{2N}{N-3}\,.\,\frac{1}{1-\beta} \quad \text{as $\beta \nearrow 1$}
$$
when $N\ge 4$, as required. \end{proof}

\section{Analysis of Model~B}\label{SecB}

Recall that Model~B is defined by the It\^{o} form matrix stochastic differential equation~\eqref{E: SDE}:
$$ dZ(t) = -\gamma\overline{Z}(t) dt +   dB(t) T(t)$$
where
$$ \overline{Z}(t) =   \left(I_N- \frac{1}{N}\mathbf{1}_{N,N}\right)Z(t)$$
$$ 
\Cov^Z(t) = \frac{1}{N} \overline{Z}(t)^T\, \overline{Z}(t)\,,
$$
and $T(t)$ is the unique non-negative definite symmetric square root of $\Cov^Z(t)$. 
\begin{lemma}\label{L: SDE is well posed} The stochastic differential equation~\eqref{E: SDE} has a unique strong solution.\end{lemma}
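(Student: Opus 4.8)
The plan is to reduce equation~\eqref{E: SDE} to an autonomous matrix SDE for the centred process, verify that its coefficients are continuous with linear growth (this is the only place the Araki--Yamagami inequality enters), produce a weak solution, and then upgrade it to a unique strong solution by establishing pathwise uniqueness. First I would apply $V$ to~\eqref{E: SDE}: since $\overline{Z}(t) = V^TVZ(t)$, $VV^T = I_{N-1}$, $\mathbf{1}_{1,N}\overline{Z}(t) = 0$, and $\mathbf{1}_{1,N}V^T = \mathbf{0}_{1,N}$, the process $\tilde{Z}(t) := VZ(t) = V\overline{Z}(t)$ solves the autonomous It\^o equation
\[ d\tilde{Z}(t) = -\gamma\,\tilde{Z}(t)\,dt + d\tilde{B}(t)\,T(t), \qquad T(t) = \Bigl(\tfrac{1}{N}\tilde{Z}(t)^T\tilde{Z}(t)\Bigr)^{1/2}, \]
where $\tilde{B} := VB$ is a standard $(N-1)\times d$ matrix Brownian motion (the rows of $V$ being orthonormal) and $T(t)$ is a function of $\tilde{Z}(t)$ alone, since $\Cov^Z(t) = \tfrac1N\overline{Z}(t)^T\overline{Z}(t) = \tfrac1N\tilde{Z}(t)^T\tilde{Z}(t)$. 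Conversely, a solution $\tilde{Z}$ of this equation yields a solution of~\eqref{E: SDE} on setting $\overline{Z}(t) = V^T\tilde{Z}(t)$ and $\mu^Z(t) = \mu^Z(0) + N^{-1/2}\int_0^t dF(s)\,T(s)$, where $F := N^{-1/2}\mathbf{1}_{1,N}B$ is a standard $1\times d$ Brownian motion independent of $\tilde{B}$; the last term is an It\^o integral against an independent Brownian motion of an already-determined integrand, hence introduces no fixed-point problem. Thus it suffices to treat the autonomous equation for $\tilde{Z}$.

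The drift $\tilde{Z}\mapsto-\gamma\tilde{Z}$ is linear, hence globally Lipschitz. For the diffusion coefficient, the operator-norm form of the Araki--Yamagami inequality, $\|A^{1/2}-B^{1/2}\|\le\|A-B\|^{1/2}$ for positive semidefinite $A,B$, shows that $\tilde{Z}\mapsto T(\tilde{Z})$ is continuous --- in fact locally $\tfrac12$-H\"older --- and it has linear growth, $\|T(\tilde{Z})\|_{\mathrm{op}} = N^{-1/2}\|\tilde{Z}\|_{\mathrm{op}}\le N^{-1/2}\|\tilde{Z}\|_F$. Continuity together with linear growth yields a weak solution, via solvability of the associated martingale problem. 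Moreover, on the open set $\mathcal{O}$ of $(N-1)\times d$ matrices of full rank $d$, the matrix square root is real-analytic on the cone of positive-definite $d\times d$ matrices, so both coefficients are locally Lipschitz on $\mathcal{O}$; the usual localization argument based on Gronwall's inequality then shows that any two solutions started from the same full-rank $\tilde{Z}(0)$ and driven by the same $\tilde{B}$ coincide up to the first time $\zeta$ at which $\tilde{Z}$ either loses rank or escapes to infinity.

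It remains to show $\zeta = \infty$ almost surely. Set $Y(t) := \tilde{Z}(t)^T\tilde{Z}(t) = N\,\Cov^Z(t)$; a short application of It\^o's formula, using $VV^T = I_{N-1}$, gives on $[0,\zeta)$ the multiplicative matrix SDE
\[ dY = \Bigl(\tfrac{N-1}{N}-2\gamma\Bigr)Y\,dt + N^{-1/2}\bigl(Y^{1/2}(d\tilde{B})^T\tilde{Z} + \tilde{Z}^T(d\tilde{B})\,Y^{1/2}\bigr). \]
Taking the trace, $\mathrm{tr}\,Y$ obeys an SDE with linear drift whose quadratic variation grows at most like $(\mathrm{tr}\,Y)^2$, so a standard moment bound rules out explosion to infinity. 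Applying It\^o's formula to $\log\det Y$ on $[0,\zeta)$, where all the $Y^{\pm1/2}$ factors cancel, produces $\log\det Y(t) = \log\det Y(0) + ct + M(t)$ with an (explicitly computable) constant $c$ and a continuous martingale $M$ of deterministic bracket $\langle M\rangle_t = (4d/N)\,t$; hence $\log\det Y$ extends continuously and finitely to $t=\zeta$ on the event $\{\zeta<\infty\}$, contradicting the definition of $\zeta$. Therefore $\zeta=\infty$ a.s.\ --- this is where the standing hypothesis that the initial opinions are not contained in an affine hyperplane, i.e.\ $\Cov^Z(0)\succ0$ (which forces $N\ge d+1$), is used, since it guarantees $Y(0)\succ0$. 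Pathwise uniqueness now holds on $[0,\infty)$, so by the Yamada--Watanabe theorem the weak solution is a unique strong solution. (Alternatively, strong existence can be read off directly from the explicit formula in Theorem~\ref{T: continuous time solution}, whose verification by It\^o's formula does not rely on uniqueness.)

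The only genuinely non-routine point --- the main obstacle --- is that the diffusion coefficient $T(\cdot)$ is merely $\tfrac12$-H\"older continuous and fails to be Lipschitz exactly along the locus $\{\det\Cov^Z=0\}$, so the classical Lipschitz SDE theory does not apply globally. The argument therefore hinges on the a priori estimate that the solution never reaches this locus; this is the role of the $\log\det Y$ computation, which works because $\Cov^Z(0)$ is positive definite and because the induced dynamics of $Y=N\,\Cov^Z$ are multiplicative, forcing $\log\det Y$ to be a well-behaved semimartingale rather than one that can run off to $-\infty$ in finite time.
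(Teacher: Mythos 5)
Your argument reaches the right conclusion, but it is built on a misreading of the key inequality, and as a result it is a great deal more elaborate than the paper's proof. What you quote as the Araki--Yamagami inequality, $\|A^{1/2}-B^{1/2}\|_{\mathrm{op}}\le\|A-B\|_{\mathrm{op}}^{1/2}$ for positive semidefinite $A,B$, is a correct but different fact (it follows from the operator monotonicity of the square root). The Araki--Yamagami inequality that the paper actually invokes concerns the matrix \emph{absolute value} $|A|=(A^*A)^{1/2}$ of an arbitrary rectangular matrix $A$, and states that $\|\,|A|-|B|\,\|_2\le\sqrt2\,\|A-B\|_2$ in Frobenius norm; that is, $A\mapsto|A|$ is globally \emph{Lipschitz}. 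Since $T(t)=\bigl(\tfrac1N\overline Z(t)^T\overline Z(t)\bigr)^{1/2}=\tfrac1{\sqrt N}\,|\overline Z(t)|$, the diffusion coefficient, viewed as a function of $Z$ through the $1$-Lipschitz orthogonal projection $Z\mapsto\overline Z$, is globally Lipschitz; together with the linear drift this places the SDE squarely inside the classical It\^o existence-and-uniqueness theorem. The ``main obstacle'' you identify --- that $T$ is only $\tfrac12$-H\"older and non-Lipschitz on $\{\det\Cov^Z=0\}$ --- is an artefact of viewing $T$ as a function of $\Cov^Z$ rather than of $\overline Z$; there is in fact no such obstacle.

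The machinery you deploy to get around it (martingale-problem weak existence, local Lipschitzness on the full-rank open set, a priori non-explosion via $\mathrm{tr}\,Y$, a priori non-degeneracy via $\log\det Y$, and Yamada--Watanabe to upgrade to a strong solution) is a legitimate alternative route and, as far as I can tell, essentially sound, apart from a small arithmetic slip: the bracket of the martingale part of $\log\det Y$ should be $(4d/N^2)\,t$ rather than $(4d/N)\,t$, though only its determinism and finiteness matter. Your route has some incidental value --- the $\log\det Y$ computation also shows that the rank of $\Cov^Z$ is preserved, a fact the paper records separately, and it is close in spirit to the Norris--Rogers--Williams analysis used later in the section. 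But it requires the standing assumption $\Cov^Z(0)\succ0$ to get started, whereas the Lipschitz argument handles arbitrary initial data, and it is an order of magnitude longer than the two-line check that the genuine Araki--Yamagami inequality makes available.
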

\begin{proof}
We claim that the coefficients $- \gamma\overline{Z}(t) = -\gamma\left(Z(t) - \mathbf{1}_{N,1}\mu^Z(t)\right)$ and $T(t)$ are globally Lipschitz functions of the state $Z(t)$. The existence and uniqueness of a strong solution follows from this claim by a well-known theorem of It\^{o} (see, for example, \cite[Thm IV.3.1]{Ikeda} or \cite[Thms 5.2.5 and 5.2.9]{Karatzas}). 
Since all norms on a given finite-dimensional space of matrices are equivalent, it does not matter which norm we use to check the global Lipschitz condition. It is easiest to use the Frobenius norm. The Frobenius norm $\|A\|_2$ of an $m$~by~$n$ (real or complex) matrix $A$ is simply the square root of the sum of the squares of the absolute values of its entries.  

The absolute value {$|A|$} of a rectangular real or complex matrix $A$ is the unique non-negative definite Hermitian square root of $A^*A$, where $A^*$ denotes the Hermitian conjugate of $A$. Note that $T(t) = \frac{1}{N}\left|\overline{Z}(t)\right|$. We apply the Araki-Yamagami inequality (originally proven in \cite[Thm~1]{Araki} for operators on a Hilbert space, but see~\cite[Thm~VII.5.7, eqn~VII.39]{Bhatia} for a version that applies to arbitrary rectangular real or complex matrices). This says that the map that sends any matrix $A$ to its absolute value $|A|$ is Lipschitz with constant $\sqrt{2}$ with respect to the Frobenius norm:
\begin{equation}
\label{E: Araki-Yamagami inequality} 
\|\, |A| - |B|\,\|_2 \le \sqrt{2} \| A - B\|_2\,.
\end{equation}
This shows that the map $\overline{Z}(t) \mapsto T(t)$ is a $(\sqrt{2}/N)$-Lipschitz map from the space of $N$ by $d$ real matrices to the space of $d$ by $d$ real matrices, with respect to their Frobenius norms.
Since the map $Z(t) \mapsto \overline{Z}(t)$ is an orthogonal projection with respect to the Frobenius norm, it is $1$-Lipschitz, and hence the composition $Z(t) \mapsto T(t)$ is $(\sqrt{2}/N)$-Lipschitz.
\end{proof}
From the uniqueness statement, we may deduce the following corollary:
\begin{corollary}
The process $Z(\cdot)$ is affinely invariant:  if $M$ is any $d$ by $d$ real matrix and $v \in \mathbb{R}^d$ is any row vector, then $(Z(t)M + \mathbf{1}_{N,1}v)_{t \ge 0}$ is the unique strong solution of the SDE~\eqref{E: SDE} started from the initial condition $Z(0)M + \mathbf{1}_{N,1}v$, (with the same driving Brownian motions).
\end{corollary}

Note that $\overline{Z}(\cdot)$ satisfies an autonomous SDE, namely
$$d\overline{Z}(t)  = \left(I_N- \frac{1}{N}\mathbf{1}_{N,N}\right)d Z(t) = -\gamma\overline{Z}(t) dt + d\overline{B}(t)\,T(t)\,,  $$
where $\overline{B}(t) = \left(I_N- \frac{1}{N}\mathbf{1}_{N,N}\right)B(t)$.
We also remark that the infinitesimal covariation of the process $Z(\cdot)$ satisfies (in matrix form) $dZ(t)^T dZ(t) = \Cov^Z(t)\,dt$.

\subsection{Proof of Theorem~\ref{T: continuous time solution}}

Norris, Rogers, and Williams \cite{NRW} discuss
 the right-invariant Brownian motion $G$ on $GL(n,\mathbb{R})$, satisfying $G(0) = I$, and $$ \partial G = (\partial B)G$$ where $\partial$ denotes the Stratonovich differential, which is convenient here for the simple form of its chain rule, and $B$ is a standard $n^2$-dimensional Brownian motion, thought of as an $n$ by $n$ matrix of independent standard Brownian motions. The It\^{o} form of this matrix SDE is
$$dG = dB\, G + \frac{1}{2}dB\,dG = dB\, G + \frac{1}{2}G \,dt\,. $$

The right-invariance implies that for each $u> 0$, the process $(G(t+u)G(u)^{-1}: t \ge 0)$ is identical in law to $G$ and is independent of the process $(G(r): r \le u)$. Therefore, conditional on $G(u) = K$, the process $(G(t+u): t \ge 0)$ is identical in law to $(G(t)K: t \ge 0)$, which is a right-invariant Brownian motion started at $K$. The process $G(\cdot)$ almost surely exists for all time. The authors of \cite{NRW} note that $\left(G^{-1}\right)^T$ is identical in law to $G$, and that if $v(0) \in \mathbb{R}^n$ is any fixed vector then the process defined by $v(t): = G(t)v(0)$ satisfies the autonomous Stratonovich form SDE $$ \partial v(t) = (\partial B) v(t)\,.$$ They define $X(t) = G(t)G(t)^T$ and $Y(t) = G(t)^T G(t)$, and study the properties of the two processes~$X$ and~$Y$ in the space of positive-definite symmetric matrices. In particular, they show that each of $X(\cdot)$ and~$Y(\cdot)$ is a Markov process in its own generated filtration. Moreover, the law of $Y$ is invariant under similarity transformation $Y \mapsto K^T YK$, for an arbitrary fixed element $K \in O(n)$. (In \cite{NRW} this is stated for $K \in GL(n)$, but in fact orthogonality of $K$ is required to make the similarity transformation preserve the initial condition $G(0) = I$.)  

{In the proof of Theorem~\ref{T: continuous time solution} we will apply the results of \cite{NRW} taking $n = N-1$. We will also need the following simple result.}

\begin{lemma}\label{L: rotating the driving motion}
Let $W$ be an $N-1$ by $N-1$ matrix of independent standard Brownian motions and let $R$ be a continuous semimartingale adapted to the same filtration as $W$, taking its values in $N-1$ by $d$ real matrices, such that the columns of $R$ are almost surely orthogonal. Let $S$ be the solution of the matrix It\^{o} equation $dS = dW R$ with initial value $S(0) = \mathbf{0}_{N-1,d}$. Then $S$ is an $N-1$ by $d$ matrix of independent Brownian motions, adapted to the same filtration as $W$. 
\end{lemma}
\begin{proof}
 $S$ is a continuous matrix martingale, so by L\'evy's characterization of multidimensional Brownian motion it suffices to check the covariation:
 $$dS_{i,j} dS_{k,\ell}   =  \sum_{a =1}^d\sum_{b = 1}^d dW_{ia} R_{aj} dW_{kb} R_{b\ell}
  =  dt \sum_{a,b} \delta_{ik} \delta_{ab} R_{aj} R_{b\ell} =  \delta_{ik} \delta_{j\ell}\,dt.$$

\end{proof}
With this background in hand, we are in a position to prove Theorem~\ref{T: continuous time solution}.
\begin{proof}[Proof of Theorem~\ref{T: continuous time solution}]
Let the processes $\hat{W}$, $H$, $\overline{Z}$, $Z$, $F$ and $T$ be as constructed in the statement of Theorem~\ref{T: continuous time solution}.
We begin by showing that the process $VZ$, which takes its values in the space of $N-1$ by $d$ matrices, and is equal to $V\overline{Z}$, satisfies the following Stratonovich form SDE:
\begin{equation}\label{E: dagger}
\partial(VZ)(t) = - \gamma' VZ(t) \partial t + \frac{1}{\sqrt{N}} \partial W(t) VZ(t)\,,
\end{equation}
where $W$ is an $N-1$ by $N-1$ matrix of independent standard Brownian motions. This follows on applying the Leibniz rule to the defining equation for $\overline{Z}$, which we recall:
$$\overline{Z}(t) = e^{-\gamma' t} V^T G(t/N) V\overline{Z}(0)\,.$$
The final term in~\eqref{E: dagger} arises from setting $$W_t = \sqrt{N}\hat{W}_{t/N}\,$$ where $\hat{W}$ is the driving matrix of Brownian motions in~\eqref{E: H def}, which we use in the form $\partial G(t/N) = \partial \hat{W}_{t/N} G(t/N)$.  

We now convert~\eqref{E: dagger} into It\^{o} form:
\begin{equation}\label{E: double dagger}
d(VZ)(t) = -\gamma' VZ(t) dt + \frac{1}{\sqrt{N}} dW(t) VZ(t) + \frac{1}{2\sqrt{N}} dW(t)d(VZ)(t)\,.
\end{equation}
We may compute the covariation term $dW(t)d(VZ)(t)$ by computing the infinitesimal covariation of $dW$ with both sides of~\eqref{E: double dagger}, obtaining
$$ dW(t)\, d(VZ)(t) =  \frac{1}{\sqrt{N}} dW(t)\, dW(t) VZ(t) = \frac{1}{\sqrt{N}} I_{N-1} dt\, VZ(t) = \frac{1}{\sqrt{N}} VZ(t) dt\,.$$
Substituting this back into \eqref{E: double dagger} we obtain 
\begin{equation}\label{E: triple dagger}
d(VZ)(t) = -\left(\gamma'-\frac{1}{2N}\right) VZ(t) dt + \frac{1}{\sqrt{N}} dW(t) VZ(t)  = -\gamma VZ(t) dt + \frac{1}{\sqrt{N}}dW(t)VZ(t)\,.
\end{equation}
We see that the passage from Stratonovich to It\^{o} form merely requires a modification of the coefficient of the drift of $VZ$ towards $0$.

Since $VV^T = I_{N-1}$ and $V^T VZ(t) = \left(I_{N} - \frac{1}{N}\mathbf{1}_{N,N}\right) Z = \overline{Z}(t)$, we have
$$ N \mathrm{Cov}^Z(t) = \overline{Z}(t)^T \overline{Z}(t) = Z(t)^T V^T V V^T VZ(t) = Z(t)^T V^T V Z(t) = (VZ(t))^T VZ(t)\,.$$
Recall that $\overline{Z}$ was constructed as
$$ \overline{Z}(t) = e^{-\gamma' t} V^T G(t/N) V \overline{Z}(0)\,, $$
so we have
$$ VZ(t) = V\overline{Z}(t) = e^{-\gamma' t} VV^T G(t/N) VZ(0) = e^{-\gamma' t}  G(t/N) VZ(0)\,,$$ and hence
\begin{equation}\label{E: Cov Z expression} N \mathrm{Cov}^Z(t) = (VZ(t))^T VZ(t) = e^{-2\gamma' t} Z(0)^T V^T G(t/N)^T G(t/N) VZ(0)\,.
\end{equation}
Since $G(t/N) \in GL(N-1,\mathbb{R})$ for all $t$, we find that $G(t/N) VZ(0)$ has the same rank as $VZ(0)$, namely $d$, and hence $\mathrm{Cov}^Z(t)$ is strictly positive definite, and so $T(t)$ is non-singular. Since the positive definite square-root map is analytic on the space of {strictly} positive definite symmetric matrices, $T(t)$ is a continuous semimartingale, and {similarly, since matrix inversion is analytic away from the singular matrices, $T(t)^{-1}$ is also a continuous semimartingale}.
Define $$R(t) = VZ(t) T(t)^{-1}\,,$$ and observe that since $T(t)$ is a symmetric matrix, we have
$$ 
R(t)^T R(t) = N T(t)^{-1}(VZ(t))^T VZ(t) T(t)^{-1} = T(t)^{-1} T(t)^2 T(t)^{-1} = I_d\,,
$$ 
which is to say that $R(t)$ is an $N-1$ by $d$ matrix with orthogonal columns. Moreover, $R$ is a continuous semimartingale adapted to the same filtration as $W$.  Now we apply Lemma~\ref{L: rotating the driving motion} to obtain an $N-1$~by~$d$ matrix~$S$ of independent Brownian motions such that
$$ 
dS(t) = dW(t) R(t) = dW(t) VZ(t) T(t)^{-1}\,,
$$
and hence
$$ dS(t) T(t) = dW(t) VZ(t)\,.$$ This means we may rewrite~\eqref{E: triple dagger} as
\begin{equation}\label{E: quadruple dagger}
d(VZ)(t)  = -\gamma VZ(t) dt + \frac{1}{\sqrt{N}}dS(t)T(t)\,.
\end{equation}
Multiplying on the left by $V^T$ and using $VZ = V\overline{Z}$ we obtain
\begin{equation}\label{E: quintuple dagger}
d\overline{Z}(t)  = -\gamma \overline{Z}(t) dt + \frac{1}{\sqrt{N}}d(V^TS)(t)T(t)\,.
\end{equation}
Now define 
\begin{equation}\label{E: B def} B(t) := V^T S(t) + \frac{1}{\sqrt{N}}\mathbf{1}_{N,1} F(t)\,.\end{equation}
Since the columns of $V^T$ are orthonormal and orthogonal to the unit column vector $\frac{1}{\sqrt{N}}\mathbf{1}_{N,1}$, we see by a calculation similar to the proof of Lemma~\ref{L: rotating the driving motion} that $B$ is an $N$ by $d$ matrix of independent standard Brownian motions. 

Recall that $Z$ is constructed from $Z(0)$ and the process $\overline{Z}$ by solving 
\begin{equation}\label{E: Z from Zbar} dZ(t) = d\overline{Z}(t) + \frac{1}{\sqrt{N}} \mathbf{1}_{N,1} dF(t) T(t)\,.\end{equation}
Combining~\eqref{E: quintuple dagger},~\eqref{E: B def}, and~\eqref{E: Z from Zbar}, we obtain equation~\eqref{E: SDE}. To check that we have a strong solution, note that $F$ and $S$ may be recovered from $B$, so it is apparent from~\eqref{E: quintuple dagger} and~\eqref{E: Z from Zbar} that $Z(t)$ is a measurable function of $(B_s)_{0 \le s \le t}$. This completes the proof of Theorem~\ref{T: continuous time solution}.
\end{proof}
\begin{proof}[Proof of Corollary~\ref{C: critical gamma}]
When $\gamma > \frac{1}{2} - \frac{3}{2N}$, we have $2\gamma' > \frac{N-2}{N}$. Now, $\frac{N-2}{N}$ is the leading Lyapunov exponent of the process $Y(t/N) = G(t/N)^T G(t/N)$ (see \cite{NRW}). Hence  by equation~\eqref{E: Cov Z expression} we have $\overline{Z}(t)^T \overline{Z}(t) \to 0$ a.s., which implies that $\overline{Z}(t) \to 0$ a.s., and in fact  $T(t)$ decays exponentially, which implies that $Z(t) -\overline{Z}(t)$ a.s.~converges as $t \to \infty$.

On the other hand, suppose that $\gamma < \frac{1}{2} - \frac{3}{2N}$. Then a.s.~the leading eigenvalue of $\mathrm{Cov}^Z(t)$ grows exponentially as $t \to \infty$, so $\overline{Z}(t)$ and $Z(t)$ must both be unbounded as $t \to \infty$.  
\end{proof}
We conclude this section with a few remarks about the process $\mathrm{Cov}^Z$. Almost surely the rank of $\Cov^Z(t)$ is equal for all $t$ to the rank of $\Cov^Z(0)$, and the minimal affine space containing $\{Z_1(t), \dots, Z_N(t)\}$ is $\mathbb{R}^d$ for all $t$. This follows from the fact that $G(t)$ stays in $GL(N-1,\mathbb{R})$ for all time, almost surely.

Finally, we will prove Proposition~\ref{P: covariance is Markov in model B}, which states that $\mathrm{Cov}^Z(t)$ is a Markov process in its own filtration.
\begin{proof}[Proof of Proposition~\ref{P: covariance is Markov in model B}]
We use~\eqref{E: triple dagger} to compute 
$$ dW(t)\,dVZ(t) = \frac{1}{\sqrt{N}} VZ(t) \,dt$$ and
$$ dW(t)^T dVZ(t) = \frac{1}{\sqrt{N}} dW(t)^T dW(t) VZ(t) = \frac{N-1}{\sqrt{N}}VZ(t)\,dt\,,$$
hence
\begin{align*}
(d(VZ)(t))^T d(VZ)(t) &= \frac{1}{\sqrt{N}}(d(VZ)(t))^T dW(t) VZ(t) 
\\& = \frac{N-1}{N} (VZ(t))^T VZ(t) = (N-1)\mathrm{Cov}^Z(t)\,.
\end{align*}
Taking the It\^{o} differential of both sides of the equation $N\mathrm{Cov}^Z(t) = VZ(t)^T VZ(t)$, we find
\begin{eqnarray*} N d \mathrm{Cov}^Z(t) & = & d(VZ(t))^T VZ(t) + (VZ(t))^T dVZ(t) + dVZ(t)^T \,dVZ(t)\\
& = & (N-1 -2\gamma N) \mathrm{Cov}^Z(t) + \frac{1}{\sqrt{N}} (VZ(t))^T(dW(t)^T + dW(t))VZ(t) 
\end{eqnarray*}
We now use the (canonical) decomposition  $VZ(t) = R(t)T(t)$, and another application of Lemma~\ref{L: rotating the driving motion}, to express this as
$$N d\mathrm{Cov}^Z(t) = (N-1 - 2\gamma N)\mathrm{Cov}^Z(t) + \frac{1}{\sqrt{N}} T(t) (dS(t)^T R(t) + R(t)^TdS(t))T(t)\,, $$
where $S(t)$ is some $N-1$ by $d$ matrix of independent Brownian motions adapted to the same filtration as~$W$. Another calculation similar to the proof of Lemma~\ref{L: rotating the driving motion} shows that because $R(t)$ has orthogonal columns, we may replace $dS(t)^TR(t)$ by a $d$ by $d$ matrix $U$ of independent standard Brownian motions adapted to the same filtration as $W$. We obtain
$$N d\mathrm{Cov}^Z(t) = (N-1 - 2\gamma N)\mathrm{Cov}^Z(t) + \frac{1}{\sqrt{N}} T(t) (dU(t)^T + dU(t))T(t)\,, $$
Since $T(t)$ is a function of $\mathrm{Cov}^Z(t)$, this implies that $\left(\mathrm{Cov}^Z(t)\right)_{t \ge 0}$ is a Markov process in its own generated filtration. This result is similar to the result in \cite{NRW} that the process $Y(t) = G(t)^T G(t)$ is a Markov process in its own filtration. However, we were not able to deduce Proposition~\ref{P: covariance is Markov in model B} directly from the Markovian nature of~$Y$.
\end{proof}
\begin{remark} With extra work we believe that it should be possible to show that the mapping from $Z$ to $\mathrm{Cov}^Z$ is a strong lumping, which is to say that for any time $s > 0$, conditional on $\mathrm{Cov}^Z(s)$, the future process $\left(\mathrm{Cov}^Z(t)\right)_{t \ge s}$ and $Z(s)$ are independent. {We expect this by analogy with Model A, which does have the corresponding strong lumping property, as we explained after the statement of Proposition~\ref{prop: Covs are Markov}.}
\end{remark}

\section{Appendix}
Recall the definition of the function $\phi_m$ in equation~\eqref{def of phi}.
\begin{lemma}\label{lem:phi}
For all $m>0$ we have
\begin{align*}
\phi_m(x) &=\psi(m)
-\sum_{k=1}^\infty \frac{(-x)^k}{k\cdot m(m+1)\dots(m+k-1)}
=
\psi(m)+\int_0^1 \frac{(1-e^{-xs})(1-s)^{m-1}}{s}\mathrm{d}s.
\end{align*}
\end{lemma}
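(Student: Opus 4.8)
The plan is to establish the two claimed identities for $\phi_m$ separately, both starting from the defining series $\phi_m(x) = e^{-x}\sum_{j \ge 0}\frac{x^j}{j!}\psi(j+m)$. For the first (the alternating power series), I would use the functional equation of the digamma function, $\psi(j+m) = \psi(m) + \sum_{i=0}^{j-1}\frac{1}{m+i}$, to split the sum. The $\psi(m)$ part contributes $\psi(m)\, e^{-x}\sum_j \frac{x^j}{j!} = \psi(m)$. The remaining double sum $e^{-x}\sum_{j \ge 1}\frac{x^j}{j!}\sum_{i=0}^{j-1}\frac{1}{m+i}$ should be rearranged by swapping the order of summation over $i$ and $j$; after pulling out $e^{-x}$ and recognizing the inner sum over $j$ as (a tail of) the exponential series, one should be able to massage it into $\sum_{k \ge 1}\frac{(-x)^k}{k\, m(m+1)\cdots(m+k-1)}$, perhaps most cleanly by checking that both sides satisfy the same first-order ODE in $x$ with the same value at $x=0$. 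Indeed, differentiating $\phi_m$: from the series, $\phi_m'(x) = -\phi_m(x) + e^{-x}\sum_j \frac{x^{j-1}}{(j-1)!}\psi(j+m) = e^{-x}\sum_{j\ge 0}\frac{x^j}{j!}\bigl(\psi(j+1+m) - \psi(j+m)\bigr) = e^{-x}\sum_j \frac{x^j}{j!}\cdot\frac{1}{j+m}$, so $\phi_m'(x) = \frac{1}{x}\bigl(\phi_{m-1}(x)\cdot\text{something}\bigr)$ — more usefully, $x\,\phi_m'(x) = e^{-x}\sum_j \frac{x^{j+1}}{j!(j+m)}$, a form amenable to comparison.

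For the integral representation, the cleanest route is again via the ODE. I would verify that $g(x) := \psi(m) + \int_0^1 \frac{(1-e^{-xs})(1-s)^{m-1}}{s}\,\mathrm{d}s$ satisfies $g(0) = \psi(m)$ (trivially) and compute $g'(x) = \int_0^1 e^{-xs}(1-s)^{m-1}\,\mathrm{d}s$. Then I would show $\phi_m$ satisfies the matching identity: from the computation above, $\phi_m'(x) = e^{-x}\sum_{j\ge 0}\frac{x^j}{j!(j+m)}$, and writing $\frac{1}{j+m} = \int_0^1 t^{j+m-1}\,\mathrm{d}t$ gives $\phi_m'(x) = \int_0^1 t^{m-1} e^{-x}\sum_j \frac{(xt)^j}{j!}\,\mathrm{d}t = \int_0^1 t^{m-1} e^{-x(1-t)}\,\mathrm{d}t$, which after substituting $s = 1-t$ becomes exactly $\int_0^1 (1-s)^{m-1} e^{-xs}\,\mathrm{d}s = g'(x)$. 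Since $\phi_m(0) = \psi(m) = g(0)$ and the derivatives agree for all $x$, the two functions coincide; uniform convergence of the relevant series on compacta (which follows from crude bounds, since $\psi(j+m) = O(\log j)$) justifies the term-by-term differentiation and the interchange of sum and integral.

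The main obstacle is not conceptual but bookkeeping: one must be careful that the series defining $\phi_m$ and its formal derivative converge locally uniformly so that differentiation under the summation sign is legitimate, and that the integral $\int_0^1 \frac{(1-e^{-xs})(1-s)^{m-1}}{s}\,\mathrm{d}s$ converges near $s=0$ (it does, since $1-e^{-xs} = O(s)$) and near $s=1$ when $0 < m < 1$ (it does, since $1-e^{-xs}$ is bounded and $\int_0^1 (1-s)^{m-1}\,\mathrm{d}s < \infty$). Once those convergence points are dispatched, the ODE-comparison argument closes both identities simultaneously, and no delicate estimates are required. I would present the ODE $\phi_m'(x) = \int_0^1(1-s)^{m-1}e^{-xs}\,\mathrm{d}s$ as the central computation and derive both stated forms of $\phi_m$ as corollaries, reading off the alternating series by expanding $e^{-xs} = \sum_k \frac{(-xs)^k}{k!}$ inside the integral and integrating $\int_0^1 s^k (1-s)^{m-1}\,\mathrm{d}s = B(k+1,m) = \frac{k!\,\Gamma(m)}{\Gamma(m+k+1)}$, then integrating in $x$ term by term.
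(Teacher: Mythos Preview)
Your proposal is correct and follows essentially the same route as the paper: both compute $\phi_m'(x)=e^{-x}\sum_{j\ge 0}\frac{x^j}{j!(m+j)}$ from the defining series via $\psi(y+1)-\psi(y)=1/y$, match derivatives and the value at $x=0$, and use the Beta integral $\int_0^1 s^{k}(1-s)^{m-1}\,\mathrm{d}s=\frac{k!\,\Gamma(m)}{\Gamma(m+k+1)}$ to link the power series and the integral form. The only minor difference is the order of derivation: you pass from $\phi_m'$ to the integral directly using $\frac{1}{j+m}=\int_0^1 t^{j+m-1}\,\mathrm{d}t$ and then expand to get the series, whereas the paper first obtains the Taylor series for $\phi_m'$ by iterating the recursion $\phi_m'(x)=\tfrac{1}{m}-\tfrac{x}{m}\phi_{m+1}'(x)$ and then separately verifies the integral via the Beta-function expansion; your route is arguably a little more streamlined.
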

\begin{proof}
Differentiating the expression in~\eqref{def of phi}, and using the fact that $\psi(y+1)-\psi(y)=1/y$, we get
\begin{align}\label{eq:phiprime}
\begin{split}
\phi_m'(x)&=-e^{-x}\psi(m)-
e^{-x}\sum_{j=1}^\infty\psi(j+m)\left[\frac{x^j}{j!}-\frac{x^{j-1}}{(j-1)!}\right]
\\ &=
e^{-x}\left[ -\psi(m)+\psi(m+1)\right]
+e^{-x}\sum_{j=1}^\infty\frac{x^j}{j!}\left[\psi(m+j+1)-\psi(m+j)\right]
\\ &
=e^{-x}\left[\frac1m
+\sum_{j=1}^\infty\frac{x^j}{j!(m+j)}
\right]
=e^{-x}\sum_{j=0}^\infty\frac{x^j}{j!(m+j)}
=\mathbb{E}\left(\frac1{m+\xi}\right)
\end{split}\end{align}
where $\xi \sim \mathrm{Poi}(x)$. Note also that
$$
\left(x^m e^x \phi_m'(x)\right)'=\sum_{j=0}^\infty \frac{x^{j+m-1}}{j!}=x^{m-1} e^x
$$
so that $\phi_m(x)$ satisfies the second-order linear differential equation
$$
x \phi_m''+(x+m)\phi_m'=1;\quad\phi_m(0)=\psi(m); \quad \phi_m'(0)=\frac1m.
$$
The expression for $\phi_m'(x)$ can be simplified
further:
\begin{align*}
\phi_m'(x)&=\frac{e^{-x}}m\left[1
+\sum_{j=1}^\infty\frac{x^j}{(j-1)!} \left(\frac1j-\frac1{j+m}\right)
\right]=
\frac{e^{-x}}m\left[\sum_{j=0}^\infty\frac{x^j}{j!} 
-x\sum_{j=1}^\infty\frac{x^{j-1}}{(j-1)!(j+m)} 
\right]
\\ &=
\frac{e^{-x}}m\left[e^x
-x\sum_{j=0}^\infty\frac{x^{j}}{j!(j+m+1)} 
\right]
=\frac1m -\frac{x}m\phi'_{m+1}(x).
\end{align*}
Iterating this formula $\ell$ times, we obtain that
$$
\phi_m'(x)=\frac1m-\frac{x}{m(m+1)}
+\frac{x^2}{m(m+1)(m+2)}-\dots  \pm \frac{x^\ell}{m(m+1)\dots(m+\ell-1)}\phi'_{m+\ell}(x).
$$
Noting that $0\le \phi_m'(x)\le 1$ for all $m$ and letting $\ell\to\infty$ we obtain 
\begin{align}\label{phiT}
\phi_m'(x)=
\sum_{j=0}^\infty \frac{(-x)^j}{m(m+1)\dots(m+j)}.
\end{align}
Recalling that $\phi_m(0)=\psi(m)$, we obtain $$\phi_m(x)=\psi(m)-\sum_{j=0}^\infty \frac{(-x)^{j+1}}{(j+1)\cdot m(m+1)\dots(m+j)}.$$
To get the second expression for $\phi_m(x)$, note that
$$
\frac{1-e^{-xs}}{s}=x-\frac{x^2s}{2!}+\frac{x^3s^2}{3!}-\dots=-\sum_{k=1}^\infty \frac{(-x)^ks^{k-1}}{k!}
$$
and thus
\begin{align*}
\int_0^1 \frac{(1-e^{-xs})(1-s)^{m-1}}{s}\mathrm{d}s
&=-\int_0^1 \left(\sum_{k=1}^\infty \frac{(-x)^ks^{k-1}}{k!}\right)(1-s)^{m-1}\mathrm{d}s
\\ &
=-\sum_{k=1}^\infty \frac{(-x)^k}{k\,\Gamma(k)}\int_0^1 s^{k-1}(1-s)^{m-1}\mathrm{d}s
\\ &=-
\sum_{k=1}^\infty \frac{(-x)^k}{k\,\Gamma(k)} \frac{\Gamma(k)\,\Gamma(m)}{\Gamma(m+k)}
=-
\sum_{k=1}^\infty \frac{(-x)^k}{k\cdot m(m+1)\dots(m+k-1)}.
\end{align*}
\end{proof}

\begin{proof}[Proof of Proposition~\ref{prop:Nodd}]
In case of an integer $m=(N-1)/2$, the expression obtained in~\eqref{phiT}  can be further simplified to 
\begin{align*}
\phi_m'(x)=\frac{(m-1)!}{(-x)^m}\left[
\sum_{i=m}^{\infty}\frac{(-x)^i}{i!}
\right]=\frac{(m-1)!}{(-x)^m}\left[
e^{-x}-\sum_{i=0}^{m-1}\frac{(-x)^i}{i!}
\right]\,.
\end{align*}
Let
$$
A=\sum_{i=1}^{m-1} \frac{(i-1)!}{(-x)^i} e^{-x}, \quad
B=\sum_{i=1}^{m-1} \frac{(i-1)!}{(-x)^i}\binom{m-1}{i},
\quad C=\log (x)+\mathrm{Ei}_1(x).
$$
Then
\begin{align*}
\frac{d}{dx}A
&=\frac{e^{-x}}{x}+\frac{(m-1)!e^{-x}}{(-x)^m}
=\frac{e^{-x}}{x}+\frac{(m-1)!}{(-x)^m}\sum_{j=0}^\infty \frac{(-x)^j}{j!},
\\
\frac{d}{dx}B
&=\frac{(m-1)!}{(-x)^m}\left[1-x+\frac{x^2}{2!}-....+\frac{(-x)^{m-2}}{(m-2)!}\right],
\\
\frac{d}{dx}C&=\frac{1-e^{-x}}{x}
=\frac{(m-1)!}{(-x)^m}\cdot\left[-\frac{(-x)^{m-1}}{(m-1!)}\right] -\frac{e^{-x}}{x}
\end{align*}
hence $\displaystyle \frac{d}{dx}(A-B+C)=\sum_{j=m}^\infty \frac{(m-1)!}{j!} (-x)^{j-m}=\phi_m'(x)$ according to~\eqref{phiT}. Hence, $\phi_m(x)=A-B+C+\mathrm{const}$. Now we will show that this constant is, in fact, zero. Indeed,
$$
\lim_{x\downarrow 0} C=\lim_{x\downarrow 0} \left(\log(x)+\mathrm{Ei}_1(x)\right)=-\gamma,
$$
and
\begin{align*}
A&=\sum_{i=1}^{m-1} \frac{(i-1)!}{(-x)^i}
\sum_{k=0}^i \frac{(-x)^k}{k!}+O(x)
=\sum_{i=1}^{m-1}
\sum_{k=0}^{i-1} \frac{(i-1)!}{(-x)^{i-k}k!} 
+\sum_{i=1}^{m-1}\frac{(i-1)!}{i!}
+O(x)
\\ &
=\sum_{i=1}^{m-1}\sum_{j=1}^{i} \frac{(i-1)!}{(-x)^{j}(i-j)!} 
 +\sum_{i=1}^{m-1}\frac{1}{i}
+O(x)
=\sum_{j=1}^{m-1}\frac{1}{(-x)^{j}}\sum_{i=j}^{m-1} \frac{(i-1)!}{(i-j)!} 
 +\sum_{i=1}^{m-1}\frac{1}{i}
+O(x)
\\ &
=\sum_{j=1}^{m-1}\frac{(j-1)!}{(-x)^{j}}\sum_{i=j}^{m-1} \binom{i-1}{j-1}
 +\sum_{i=1}^{m-1}\frac{1}{i}
+O(x)
=\sum_{j=1}^{m-1}\frac{(j-1)!}{(-x)^{j}} \binom{m-1}{j}
 +\sum_{i=1}^{m-1}\frac{1}{i}
+O(x)
\\ &
=B+\sum_{i=1}^{m-1}\frac 1i +O(x)
\end{align*}
(using the formula from 24.1.1 in Abramowitz and Stegun~\cite{AS} in the penultimate equality)  resulting in
$$
\lim_{x\downarrow 0} (A-B)=\sum_{i=1}^{m-1}\frac 1i,
$$
So, since $\phi_m(0)=\psi(m)=-\gamma+1+\frac12+\dots+\frac1{m-1}$,
$$
\phi_m(x)=A-B+C=\log(x)+\mathrm{Ei}_1(x)+\sum_{i=1}^{m-1} \frac{(i-1)!}{(-x)^i}
\left[e^{-x}-\binom{m-1}{i}\right],\qquad  x>0,
$$
which together with Theorem~\ref{t1} yields the statement of the claim.
\end{proof}

\begin{remark}
Let  $z=\frac{N\beta^2}{2\alpha (1-\beta)^2}$.
From Proposition~\ref{prop:Nodd} we obtain
\begin{align*}
\lambda_1(\alpha,\beta,N)=\log\beta
+\frac{\mathrm{Ei}_1 \left(z\right)}2 +
\begin{cases}
\displaystyle
0, &\text{when }N=3, \\[2mm]
\displaystyle
\frac{1-e^{-z}}{2z}, &\text{when }N=5, \\[2mm]
\displaystyle
\frac{(z-1)(2-e^{-z})}{ 2z^2}+\frac1{2z^2}, &\text{when }N=7, \\[2mm] 
\displaystyle
\frac{(z^2-z+2)(3-e^{-z})}{2z^3}-\frac2{z^3}, &\text{when }N=9.
\end{cases}
\end{align*}
\end{remark}

\vspace{10mm}

\begin{lemma}\label{lem:phiasympt}
If $m\ge 1$, then as $x\to\infty$
$$
\phi_m(x)=\log x+\frac{m-1}{x}+o\left(\frac1x\right).
$$
\end{lemma}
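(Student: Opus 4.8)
The plan is to work from the integral representation of $\phi_m$ established in Lemma~\ref{lem:phi},
$$\phi_m(x) = \psi(m) + \int_0^1 \frac{(1-e^{-xs})(1-s)^{m-1}}{s}\,\mathrm{d}s\,,$$
and to extract the asymptotics by splitting the integrand \emph{exactly} into three absolutely integrable pieces:
$$\frac{(1-e^{-xs})(1-s)^{m-1}}{s} = \frac{(1-s)^{m-1}-1}{s} + \frac{1-e^{-xs}}{s} + e^{-xs}\,\frac{1-(1-s)^{m-1}}{s}\,.$$
When $m\ge1$ each summand is bounded on $(0,1)$: the first and third have finite limits $-(m-1)$ and $m-1$ as $s\downarrow 0$, and all three stay bounded as $s\uparrow1$ because $0\le(1-s)^{m-1}\le1$. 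Hence $\phi_m(x)-\psi(m)$ splits into three convergent integrals, to be evaluated or estimated separately.

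First I would pin down the $x$-independent integral. By the classical integral representation $\psi(m) = -\gamma + \int_0^1\frac{1-t^{m-1}}{1-t}\,\mathrm{d}t$ (see \cite{AS}), the substitution $t\mapsto1-s$ gives, for all $m>0$,
$$\int_0^1 \frac{1-(1-s)^{m-1}}{s}\,\mathrm{d}s = \psi(m) + \gamma\,,$$
so the first piece contributes $-\psi(m)-\gamma$, which cancels the leading $\psi(m)$. For the second piece, the substitution $t=xs$ together with the definition of $\mathrm{Ei}_1$ gives
$$\int_0^1 \frac{1-e^{-xs}}{s}\,\mathrm{d}s = \int_0^x \frac{1-e^{-t}}{t}\,\mathrm{d}t = \log x + \gamma + \mathrm{Ei}_1(x)\,,$$
and $\mathrm{Ei}_1(x)=\int_1^\infty \frac{e^{-xu}}{u}\,\mathrm{d}u \le e^{-x}/x = o(1/x)$. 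Together, these two pieces account for the $\log x$ term and make all constants vanish; it remains to show the third piece equals $\frac{m-1}{x}+o(1/x)$.

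The third piece is $\int_0^1 e^{-xs}h(s)\,\mathrm{d}s$ with $h(s)=\big(1-(1-s)^{m-1}\big)/s$, a bounded continuous function on $[0,1]$ (this is where $m\ge1$ is used) with $h(0)=m-1$ and with an analytic, hence Lipschitz, restriction to a neighbourhood of $0$. I would write
$$\int_0^1 e^{-xs}h(s)\,\mathrm{d}s = h(0)\,\frac{1-e^{-x}}{x} + \int_0^1 e^{-xs}\big(h(s)-h(0)\big)\,\mathrm{d}s\,,$$
where the first term is $\frac{m-1}{x}+O(e^{-x}/x)$, and for the second I would split the range at $s=x^{-1/2}$: on $[0,x^{-1/2}]$ the Lipschitz bound $|h(s)-h(0)|\le Cs$ gives a contribution at most $C\int_0^\infty s\,e^{-xs}\,\mathrm{d}s = C/x^2 = o(1/x)$, while on $[x^{-1/2},1]$ boundedness of $h$ gives a contribution at most $2\|h\|_\infty\,e^{-\sqrt x}/x = o(1/x)$. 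Summing the three evaluated pieces yields $\phi_m(x)=\log x+\frac{m-1}{x}+o(1/x)$. The only mildly delicate point — and essentially the only step requiring care — is this last uniform estimate: one must note that $(1-s)^{m-1}$ need not be smooth at $s=1$ when $1<m<2$, so the argument should use only boundedness of $h$ on all of $[0,1]$ together with its smoothness near $s=0$, exactly as above; the rest is bookkeeping of error terms.
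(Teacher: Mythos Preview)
Your proof is correct. Both arguments start from the integral representation of $\phi_m$ and identify the $\log x$ contribution with $\int_0^1\frac{1-e^{-xs}}{s}\,\mathrm{d}s$, but the decompositions diverge from there. The paper subtracts off the constant and the linear term in $s$ from $(1-s)^{m-1}$, leaving a remainder $g(s)=(1-s)^{m-1}-1+(m-1)s$ that vanishes quadratically at $s=0$; the constant cancellation is then carried out via the Newton series for $\psi$, and the leftover integral $R=\int_0^1 e^{-xs}g(s)/s\,\mathrm{d}s$ is bounded by $O(1/x^2)$ using Taylor's theorem. You instead split off the $x$-independent piece $\int_0^1\frac{(1-s)^{m-1}-1}{s}\,\mathrm{d}s$ in one go and evaluate it directly as $-\psi(m)-\gamma$ using the classical integral for $\psi$, which is cleaner and avoids the Newton series entirely; the whole $x$-dependent remainder is then a single Laplace-type integral $\int_0^1 e^{-xs}h(s)\,\mathrm{d}s$ with $h$ bounded and smooth near $0$. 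Your decomposition is arguably more transparent, and your care about the non-smoothness of $(1-s)^{m-1}$ at $s=1$ for $1<m<2$ is well placed. One minor remark: your estimate on $[x^{-1/2},1]$ gives only $o(1/x)$, whereas splitting at a fixed point such as $s=1/2$ (where $h$ is still Lipschitz) would give $O(1/x^2)$, matching the paper's sharper bound; but the lemma only asks for $o(1/x)$, so this is immaterial.
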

\begin{proof}
From the integral representation 
\begin{align*}
\phi_m(x) -\psi(m)&=\int_0^1\frac{(1-e^{-xs})(1-s)^{m-1}}{s}\mathrm{d}s
\\ &=
\int_0^1 \frac{1-e^{-xs}}{s}\mathrm{d}s
-(m-1)\int_0^1 1-e^{-xs}\mathrm{d}s
\\ &
+\int_0^1 \frac{1-e^{-xs}}{s}\left[(1-s)^{m-1}-1+(m-1)s\right]\mathrm{d}s
\\ &=\gamma+\log x+\mathrm{Ei}_1(x)-(m-1)+\frac{(m-1)(1-e^{-x})}x+I
\end{align*}
where
\begin{align*}
I&=\int_0^1 \frac{1-e^{-xs}}{s}\left[(1-s)^{m-1}-1+(m-1)s\right]\mathrm{d}s
\\ &=
\int_0^1 \sum_{k=2}^{m-1} (-1)^k\binom{m-1}{k}s^{k-1}\mathrm{d}s
-
\int_0^1 \frac{e^{-xs}}{s}\left[(1-s)^{m-1}-1+(m-1)s\right]\mathrm{d}s
\\ &=
\sum_{k=2}^{m-1} \frac{(-1)^k}k \binom{m-1}{k}
+ R
=(m-1)-\psi(m)-\gamma+ R
\end{align*}
(from the Newton series for the digamma function) where
$$
R:=\int_0^1 \frac{e^{-xs}g(s)}s\mathrm{d}s,
\qquad g(s):=(1-s)^{m-1}-1+(m-1)s.
$$
Note that $\sup_{s\in[0,1]} |g(s)|\le m-1$ and by Peano's formula 
$$
g(s)=(m-1)(m-2)(1-\theta(s))^{m-3}\frac{s^2}{2!}.
$$
for some  $0\le \theta(s)\le s$. Hence for $0\le s\le 1/2$
we have $|g(s)|\le C_m s^2 $ for some $C_m>0$.
Consequently,
$$
|R|\le C_m\int_0^{1/2} s^2\,e^{-xs} \mathrm{d}s
+2(m-1)\int_{1/2}^1 e^{-xs}\mathrm{d}s
\le 
 C_m \int_0^\infty s\, e^{-xs} \mathrm{d}s+(m-1)e^{-x/2}=\frac1{x^2}
+o(x^{-2})
$$
yielding
\begin{align*}  
\phi_m(x)-\log x&=\psi(m)+\gamma+ \mathrm{Ei}_1(x)-(m-1)+\frac{m-1}{x}+I
\\ &
=\mathrm{Ei}_1(x)+\frac{m-1}{x}+O\left(\frac1{x^2}\right)=\frac{m-1}{x}+o(1/x)
\end{align*}   
\end{proof}

\section*{Acknowledgment}
The research of S.V.\ is partially supported by the Swedish Science Foundation grant VR 2019-04173. S.V.~would like to acknowledge the hospitality of the University of Bristol during his visits to Bristol. The research of E.C.\ is supported by the Heilbronn Institute for Mathematical Research.  


\begin{thebibliography}{AAA}
\bibitem{AS} Abramowitz, M., and Stegun, I. A.  Handbook of mathematical functions with formulas, graphs, and mathematical tables. Reprint of the 1972 edition. Dover Publications, Inc., New York, (1992).

\bibitem{Araki}  Huzihiro~Araki and Shigeru~Yamagami, \emph{An inequality for Hilbert-Schmidt norm}, Commun. Math. Phys.~{81} 89--96 (1981).

\bibitem{Bhatia} Rajendra~Bhatia, \emph{Matrix Analysis}, Springer Graduate Texts in Mathematics~{169} (1996).

\bibitem{BougerolLacroix} Philippe~Bougerol and Jean~Lacroix, \emph{Products of random matrices with applications to Schr\"odinger operators}, Progress in Probability, vol. 8, Birkh\"auser (1985).

\bibitem{CohenNewman} Joel~E.~Cohen~and~Charles~M.~Newman, \emph{The stability of large random matrices and their products}, Ann. Probab.~{12} 283--310 (1984).

\bibitem{DeGroot} Morris H.~DeGroot, \emph{Reaching a consensus}, J. Amer. Stat. Assoc.~{69} issue 345,  118--121 (1974).

\bibitem{DWmodel} Guillaume Deffuant, David Neau, Fr\'{e}d\'{e}ric Amblard, and G\'{e}rard Weisbuch, \emph{Mixing beliefs among interacting agents}, Adv.  Complex Syst.~{03} 87--98 (2000). 

\bibitem{Devia} Carlos Andres Devia and Giulia Giordano, \emph{A framework to analyze opinion formation models}, Sci. Rep.~{12} article no.~13441 (2022).

\bibitem{Forrester} Peter J.~Forrester, \emph{Lyapunov exponents for products of complex Gaussian random matrices}, J.~Stat.~Phys.~{151} 796--808 (2013).

\bibitem{GR} Yves Guivarc'h and Albert Raugi, \emph{Fronti\`ere de Furstenberg, propri\'et\'es de contraction et th\'eor\`emes de convergence}, Z. Wahrscheinlichkeitstheorie  Verw. Gebiete~{69} 187--242 (1985).

\bibitem{fusion} Hossein Hassani, Roozbeh Razavi-Far, Mehrdad Saif, Francisco Chiclana, Ondrej Krejcar and Enrique Herrera-Viedma, \emph{Classical dynamic consensus and opinion dynamics models: A survey of recent trends and methodologies}, Information Fusion~{88}  22--40 (2022).

\bibitem{Ikeda} Nobuyuki Ikeda and Shinzo~Watanabe, \emph{Stochastic Differential Equations and Diffusion Processes}, North-Holland~(1981).

\bibitem{Karatzas} Ioannis Karatzas and Steven~E.~Shreve, \emph{Brownian Motion and Stochastic Calculus}, 2nd ed., Springer (1991).

\bibitem{Kargin} Vladislav Kargin, \emph{On the Largest Lyapunov Exponent for Products of Gaussian Matrices}, J.~Stat.~Phys.~{157} 70--83 (2014).

\bibitem{KV} Manjunath Krishnapur and B\'alint Vir\'ag, \emph{The Ginibre ensemble and Gaussian analytic functions}, Int.~Math.~Res.~Not.  1441--1464 (2014). 

\bibitem{OD3} Frank den Hollander, \emph{Evolution of discordance}, Math.\ Phys.\ Anal.\ Geom.~{28}, no.~3, Paper No.~21, 16 pp.\  (2025).

\bibitem{LM} Beatrice Laurent and Pascal Massart, \emph{Adaptive estimation of a quadratic functional by model selection}, Ann.\ Statist.~{28}(5) 1302--1338 (2000).

\bibitem{DL}Gregory F. Lawler and Vlada Limic, \emph{Random Walk: A Modern Introduction}, Cambridge University Press (2010).


\bibitem{LeJan} Yves Le~Jan, \emph{On isotropic {B}rownian motions}, Z.~Wahr.~{70} 609--620 (1985).  

\bibitem{Capacity} Amos Lapidoth,   Stefan M. Moser, \emph{Capacity bounds via duality with applications to multiple-antenna systems on flat-fading channels}, 
IEEE Trans.\ Inform.\ Theory~49, no.~10,  2426–2467 (2003).

\bibitem{Marklof} Jens Marklof, Yves Tourigny and Lech Wo{\l}owski, \emph{Explicit invariant measures for products of random matrices}, Trans.~Amer.~Math.~Soc.~{360} No.~7,  3391--3427 (2008).

\bibitem{Newman} Charles~M.~Newman, \emph{The distribution of Lyapunov exponents: Exact results for random matrices}, Commun. Math. Phys.~{103} 121--126 (1986). 

\bibitem{NRW} James~R.~Norris, L.~Chris~G.~Rogers, and David Williams, \emph{Brownian motions of ellipsoids}, Trans. Amer. Math. Soc.~{294} no. 2, 757--765 (1986).

\bibitem{PKI} Antonio F. Peralta, J\'anos Kert\'esza, Gerardo I\~{n}igueza, \emph{Opinion dynamics in social networks: From models to data}, arXiv:2201.01322

\bibitem{Pineda} Miguel Pineda, Raúl Toral, and Emilio Hernández-García, \emph{Noisy continuous-opinion dynamics}, J. Stat. Mech. Theory Exp., 19 pp.\ (2009).

\bibitem{OD2}
Shan Liu, Ruixing Tao, \emph{Adaptive synchronization of opinions based on multi-layer time-varying networks}, J.\ Comput.\ Appl.\ Math.~{474}, Paper No. 116963, 16 pp.\  (2026).

\bibitem{SternLivan} Samuel Stern and Giacomo Livan, \emph{The impact of noise and topology on opinion dynamics in social networks}, R.~Soc.~Open Sci.~{8}: 201943 (2021).

\bibitem{OD1} Shaojie Zheng,  Dongyan Sui, Yufei Liu,  Siyang Leng, \emph{Opinion dynamics on higher-order networks with stubbornness and trust}, Chaos~{36}, no.~2, Paper No.~023106  (2026).

\end{thebibliography}
\end{document}